\numberwithin{equation}{section}
\newtheorem{theorem}{Theorem}
\newtheorem{lemma}{Lemma}
\newcommand{\mcFh}{\mathcal{F}_h}
\newcommand{\mcEh}{\mathcal{E}_h}
\newcommand{\mcF}[1]{\mathcal{F}_{#1}}
\newcommand{\Dh}{D_h}
\newcommand{\Dhinv}{D_h^{\dagger}}
\newcommand{\bfn}{\mathbf{n}}
\newcommand{\bfu}{\mathbf{u}}
\newcommand{\bfnFE}{\mathbf{n}_F \cdot \mathbf{n}_E}
\newcommand{\pdiv}{P^0_{0,\text{div}}(\mcFh)}
\newcommand{\pdivperp}{P_{0,\text{div}}^{0,\perp}(\mcFh)}
\newcommand{\jump}[1]{\llbracket #1 \rrbracket}
\newcommand{\average}[1]{\{\hspace{-3pt}\{ #1 \}\hspace{-3pt}\}}
\def\ps@pprintTitle{%
 \let\@oddhead\@empty
 \let\@evenhead\@empty
 \def\@oddfoot{}%
 \let\@evenfoot\@oddfoot}
\begin{document}

\title{Postprocessing of Non-Conservative Flux for Compatibility with Transport in Heterogeneous Media}

\author[ntnu]{Lars H.~Ods\ae ter\corref{cor}}
\ead{lars.odsater@math.ntnu.no}

\author[mfw]{Mary F.~Wheeler}
\author[ntnu]{Trond Kvamsdal}
\author[mgl]{Mats G.~Larson}

\address[ntnu]{Department of Mathematical Sciences, NTNU Norwegian University of Science and Technology,\\ Alfred Getz' vei 1, 7491 Trondheim, Norway}
\address[mfw]{The Institute for Computational Engineering and Sciences, The University of Texas at Austin, Austin, TX 78712, USA}
\address[mgl]{Department of Mathematics and Mathematical Statistics, Umeå University, SE-901 87 Umeå, Sweden}

\cortext[cor]{Corresponding author}

\begin{abstract}
A conservative flux postprocessing algorithm is presented for both steady-state and dynamic flow models. The postprocessed flux is shown to have the same convergence order as the original flux. An arbitrary flux approximation is projected into a conservative subspace by adding a piecewise constant correction that is  minimized in a weighted $L^2$ norm. The application of a weighted norm  appears to yield better results for heterogeneous media than the standard $L^2$ norm which has been considered in earlier works. We also study the effect of different flux calculations on the domain boundary. In particular  we consider the continuous Galerkin finite element method for solving Darcy flow  and couple it with a discontinuous Galerkin finite element method for an advective transport problem.
\end{abstract}

\begin{keyword}
Postprocessing \sep Local Conservation \sep Galerkin FEM \sep Darcy Flow \sep Advective Transport
\end{keyword}

\maketitle

\section{Introduction}
In this paper we consider the following coupled flow and transport problem that arise in porous media:
\begin{align}
\partial_t (\beta p) - \nabla\cdot(\mathbf{K} \nabla p) &= q, \label{eq:flow_intro} \\
\partial_t(\phi c) + \nabla\cdot(c\mathbf{u} - \mathbf{D}\nabla c) &= f. \label{eq:transport_intro}
\end{align}
Equation \eqref{eq:flow_intro}, often referred to as the Darcy flow equation, governs conservation of mass for a slightly compressible single-phase fluid in a porous media. Here $p$ represents pressure and $\mathbf{u}=-\mathbf{K}\nabla p$ the Darcy velocity. The second equation \eqref{eq:transport_intro} is known as the transport equation, and describes advective and diffusive transport of a concentration $c$. Such transport models are employed in modeling tracers in a porous media \cite{mishra1991tracer}. 
Choosing compatible numerical solvers for the flow and transport equations may be of importance for accuracy, stability and conservation properties \cite{dawson2004compatible}.
Here we discuss using a continuous Galerkin (CG) finite element method for the flow equation and apply a postprocessing method to compute fluxes on element boundaries to obtain local conservation. A discontinuous Galerkin (DG) finite element method with upwinding is employed for the transport equation \cite{wheeler1978elliptic,sun2005symmetric}. DG allows for discontinuities in the solution and has the advantages of local mass conservation, less numerical diffusion, favorable h- and p-refinement, handling of discontinuous coefficients, and efficient implementation. 

CG is a well-developed numerical discretization for partial differential equations. It is numerically efficient for problems requiring dynamic grid adaptivity. It is known that CG requires postprocessing to obtain locally conservative fluxes on element boundaries
\cite{ainsworth2000posteriori,kvamsdal1998variationally,hughes2000continuous,melbo2003goal,larson2004conservative,sun2006projections,cockburn2007locally,bush2013application,odsater2015apt,deng2016construction,becker2016local}.
This has been the topic also for studies of environmental modeling in bays and estuaries where CG has been employed for shallow water equations \cite{chippada1998projection}. 
Applying non-conservative flux to the transport equation may result in non-physical concentration solutions \cite{sun2009locally,lee2016locally,odsater2015apt}. 

Computing fluxes for CG models has been considered in many technical papers; we briefly describe some well known results and note that the list is incomplete. Optimal postprocessing of fluxes on element boundaries for one-dimensional problems was studied by Wheeler \cite{wheeler1974galerkin} and generalized by Dupont \cite{dupont1976unified}. Douglas et al.\ \cite{douglas1974galerkin} analyzed methods for approximating flux on the domain boundary for multi-dimensional problems based on the approach of J.\ Wheeler \cite{wheeler1973soh}. Postprocessing of locally conservative (or self-equilibrated) fluxes on element boundaries for multi-dimensional problems was studied by Ladeveze and Leguillon \cite{ladeveze1983error} for error estimation purposes. Ainsworth and Oden \cite{ainsworth2000posteriori} proved the existence of such self-equilbrated fluxes for general CG methods including 1-irregular meshes with hanging nodes. Superconvergence of recovered gradients of linear CG approximations for elliptic and parabolic problems was treated by Wheeler and Whiteman \cite{wheeler1987superconvergent,wheeler1994superconvergence}. 

For completeness we mention alternative schemes to CG for the pressure equation; mixed finite element methods \cite{wheeler2006multipoint}, dual-grid and control volume methods \cite{aavatsmark2002introduction}, finite volume methods \cite{chatzipantelidis2005finite}, mimetic finite difference methods \cite{brezzi2006convergence}, and DG \cite{riviere1999improved}. 
All of these are conservative in the sense that they either are formulated in a mixed form so that locally conservative fluxes are obtained directly without the need for \emph{any} postprocessing, or have an embedded local conservation statement in their derivation so that locally conservative fluxes can be calculated in a straightforward manner from the pressure solution.
Recent papers \cite{bush2013application,deng2016construction} have observed that  CG with postprocessing on the dual grid is more robust than standard control volume approaches. Here the postprocessing involves only local calculations. It is well known that for Laplace's equation, control volume and CG on the dual grid are equivalent. Lack or complexity of dynamic grid adaptivity is a disadvantage for many of the methods mentioned above. DG is promising both with respect to local conservation and dynamic grid adaptivity, but is computationally costly due to a high number of degrees of freedom. A conservative scheme based on enrichment of CG was proposed by \cite{sun2009locally} for elliptic problems and later extended to parabolic equations in \cite{lee2016locally}. 

The postprocessing method we propose in this paper is built upon the work of Sun and Wheeler \cite{sun2006projections} and Larson and Niklasson \cite{larson2004conservative} for the steady-state flow model (Eq.~\eqref{eq:flow_intro} with $\beta=0$). Both of these papers present an algorithm for computing conservative fluxes on element boundaries. Here a given general non-conservative flux approximation is modified by adding piecewise constant corrections which are minimized in a given norm. The minimization requirement ensures that the postprocessed flux has the same order of convergence as the original flux. The works by \cite{sun2006projections} and \cite{larson2004conservative} have strong similarities and are in fact identical under some specific choice of parameters, but have been formulated differently. While a variational formulation is used in \cite{larson2004conservative}, the method is presented elementwise in \cite{sun2006projections}. In this paper we present both and demonstrate the relationship between the two results.
We mention that these postprocessing methods have been applied in a series of recent works \cite{kees2008lcs,povich2013fem,beirao2015ppo,scudeler2016mass}.

The main novelties of our work compared to \cite{sun2006projections} and \cite{larson2004conservative} are summarized below.
\begin{itemize}[noitemsep]
\item The correction term is minimized in a weighted $L^2$ norm instead of the standard $L^2$ norm. This gives control of which faces should be weighted most. Our choice of weights corresponds to the inverse of the effective face permeability. This is shown to better preserve low permeable interfaces.
\item Our method applies to a wide range of grids, including non-conforming and unstructured grids, in contrast to \cite{sun2006projections}.
\item The method is applied to the time dependent flow model (Eq.~\ref{eq:flow_intro} with $\beta\neq 0$).
\item We solve the coupled problem \eqref{eq:flow_intro}-\eqref{eq:transport_intro} to demonstrate the importance of locally conservative flux and to illustrate the effect of some parameters of our postprocessing method.
\end{itemize}

The presented method is general in the sense that it takes as input any flux approximation, not restricted to non-conservative flux from classical CG, but may also originate from other numerical schemes, e.g.\ isogeometric finite elements \cite{bekele2015adaptive}, or even measurements. We remark that our method only produce conservative fluxes on element boundaries. To extend the flux to a velocity field on the element interiors one can set up a localized mixed finite element problem on each element, see \cite{sun2006projections}. 
We also point out that minimizing in a weighted norm was considered in \cite{wu2008global} in an upscaling framework. However, our presentation includes error analysis, and we also study the impact of weighting on the transport problem. An alternative approach to preserve low permeable interfaces is to add a penalization step to correct the postprocessed flux \cite{schiavazzi2013redundant}. 

This paper is outlined as follows. Section~\ref{sec:preliminaries} provides some preliminaries, including the model equations, notational comments, conservation conditions, and discretization schemes for CG and DG. Next, in Section~\ref{sec:postprocessing}, we go into details of the postprocessing method, first for the time independent case and later extended to the general case. We formulate our approach based on a discrete divergence operator and its left inverse. Furthermore, we prove an error estimate and discuss some parameters of our method.
In Section~\ref{sec:examples} we demonstrate our method with some numerical examples. Finally, in Section~\ref{sec:conclusion}, we conclude this work.

\section{Preliminaries}
\label{sec:preliminaries}

\subsection{Model Equations}

We consider a coupled flow and transport problem in a bounded domain $\Omega\subset \mathbb{R}^d$ ($d=2,3$) and in the time interval $[0,T],\,T>0$. 
\nomenclature{$\Omega$}{Model domain}%
\nomenclature{$T$}{End time}%

\paragraph{Flow Equation}
For flow, we consider the linear parabolic problem
\begin{align}
\partial_t (\beta p) - \nabla\cdot \left(\mathbf{K}\nabla p\right) = q, \quad (\mathbf{x},t)\in \Omega \times (0,T].
\label{eq:flow_eq}
\end{align}
\nomenclature{$\beta$}{Time coefficient}%
\nomenclature{$p$}{Fluid pressure}%
\nomenclature{$\mathbf{K}$}{Conductivity tensor}%
\nomenclature{$q$}{Source term}%
The unknown variable is the pressure $p$, from which the velocity $\bfu$ is defined by $\mathbf{u} = -\mathbf{K}\nabla p$. The conductivity $\mathbf{K}=\mathbf{K}(\mathbf{x})$ is the ratio between permeability and viscosity, and $\mathbf{K}$ is assumed to be symmetric positive definite and bounded from below and above. Furthermore, $\beta=\beta(\mathbf{x},t)$ is a positive coefficient and $q=q(\mathbf{x},t)$ is a source term. In the case $\beta = 0$, the flow equation is elliptic and stationary.  Throughout this paper we let $\mu=1$ for simplicity and will use the terms conductivity and permeability interchangeably.

The domain boundary $\partial \Omega$ is divided into a Dirichlet part, $\Gamma_D$, and a Neumann part, $\Gamma_N$, such that $\overline{\Gamma}_D \cup \overline{\Gamma}_N = \overline{\partial \Omega}$ and $\Gamma_D \cap \Gamma_N = \emptyset$. The boundary and initial  conditions are
\begin{subequations}
\begin{align}
p &= p_B, \quad (\mathbf{x},t)\in\Gamma_D \times (0,T], \\
\mathbf{u}\cdot\mathbf{n} = -\mathbf{K}\nabla p\cdot \mathbf{n} &= u_B, \quad (\mathbf{x},t)\in\Gamma_N \times (0,T], \\
p &= p_0, \quad\ (\mathbf{x},t)\in\Omega\times \{0\},
\end{align}
\end{subequations}
where $\mathbf{n}$ is the outward unit normal vector on $\partial \Omega$ and $p_B=p_B(\mathbf{x},t)$, $u_B=u_B(\mathbf{x},t)$ and $p_0=p_0(\mathbf{x})$ are known functions. 
\nomenclature{$\partial \Omega$}{Domain boundary}%
\nomenclature{$\Gamma_D$}{Dirichlet boundary}%
\nomenclature{$\Gamma_N$}{Neumann boundary}%
\nomenclature{$\mathbf{n}$}{Unit normal vector}%

\paragraph{Transport Equation}
The model equation for transport is the time dependent advection-diffusion equation,
\begin{align}
\partial_t(\phi c) + \nabla\cdot(\mathbf{u}c - \mathbf{D}\nabla c) = f, \quad (\mathbf{x},t)\in \Omega \times (0,T].
\label{eq:transport_eq}
\end{align}
\nomenclature{$\phi$}{Porosity}%
\nomenclature{$c$}{Concentration}%
\nomenclature{$\mathbf{D}$}{Diffusion tensor}%
\nomenclature{$f$}{Source term for transport equation}%
The unknown variable is the concentration $c$. Furthermore, $\phi=\phi(\mathbf{x})$ is the porosity (fraction of void volume) and $\mathbf{D}=\mathbf{D}(\mathbf{x},c)$ is the diffusion/dispersion tensor. The right hand side $f=f(\mathbf{x},t)$ is a source term, and when coupled with the flow equation \eqref{eq:flow_eq}, it is usually interpreted as $f=qc^*$, where $c^*$ denotes the upstream concentration, so that
\begin{align}
qc^* =
\begin{cases}
qc, & \text{if } q \le 0, \\
qc_w, & \text{if } q > 0,
\end{cases}
\end{align}
\nomenclature{$c^*$}{Upwind concentration}%
where $c_w=c_w(\mathbf{x},t)$ denotes the source (well) concentration. 

The boundary is divided into a inflow boundary,
$\Gamma_{\text{in}}=\{\mathbf{x}\in\partial\Omega : \mathbf{u}\cdot\mathbf{n}<0\}$,
and a outflow/no-flow boundary,
$\Gamma_{\text{out}}=\{\mathbf{x}\in\partial\Omega : \mathbf{u}\cdot\mathbf{n}\ge0\}$. Let $c_B=c_B(\mathbf{x},t)$ denote the inflow concentration on $\Gamma_{\text{in}}$ and $c_0=c_0(\mathbf{x})$ the initial concentration. The boundary and initial conditions are given as
\begin{subequations}
\begin{align}
(\mathbf{u}c - \mathbf{D}\nabla c)\cdot\mathbf{n} &= c_B \mathbf{u}\cdot\mathbf{n}, \ \quad (\mathbf{x},t)\in \Gamma_{\text{in}} \times (0,T], \\
(- \mathbf{D}\nabla c)\cdot\mathbf{n} &= 0, \qquad\qquad (\mathbf{x},t)\in \Gamma_{\text{out}} \times (0,T], \\
c &= c_0, \qquad\quad\ \ (\mathbf{x},t)\in\Omega\times\{0\}.
\end{align}
\end{subequations}
\nomenclature{$\Gamma_{\text{in}}$}{Inflow boundary}%
\nomenclature{$\Gamma_{\text{out}}$}{Outflow boundary}%
In this work, we will focus on advection dominated flow and disregard diffusion by setting $\mathbf{D}= 0$.

\subsection{Notation}

\paragraph{Discretization of the Domain}
Let $\mathcal{E}_h$ be a partition of $\Omega$ into triangles or quadrilaterals ($d=2$), or tetrahedra, prisms or hexahedra ($d=3$). We denote by $E_i\in\mathcal{E}_h$, for $i=1,2,\ldots, N$, the $N$ elements of the partition, and let $h_i$ be the diameter of $E_i$. We assume $\mathcal{E}_h$ to be regular in the sense that all elements are convex and that there exists $\rho>0$ such that each element $E_i$ contains a ball of radius $\rho h_i$ in its interior. Furthermore, $\mathcal{E}_h$ should be quasi-uniform, i.e., there is a $\tau>0$ such that $\frac{h}{h_i}\le\tau$ for all $E_i\in\mathcal{E}_h$, where $h$ is the maximum diameter of all elements. Notice that we allow for elements of mixed type and non-matching grids (hanging nodes).
\nomenclature{$\mathcal{E}_h$}{Partition of $\Omega$ into elements}%
\nomenclature{$E$}{An element (or grid cell) in $\mathcal{E}_h$}%
\nomenclature{$h$}{Maximum element diameter}%

We denote by $\mcF{h,I}$ the set of all interior edges ($d=2$) or faces ($d=3$), i.e.,
\begin{align}
\mcF{h,I} = \{F\in\mathbb{R}^{d-1} : F = E_i\cap E_j, E_i \in \mathcal{E}_h,E_j \in \mathcal{E}_h, E_i\neq E_j \}.
\end{align}
For simplicity we only use the term face in the following. Furthermore, we define $\mcF{h,B}$ as the set of all element faces that intersect with $\partial \Omega$. We assume that each face in $\mcF{h,B}$ is either completely on the Dirichlet or Neumann part of the boundary, such that $\mcF{h,B}$ can be decomposed into $\mcF{h,D}$ and $\mcF{h,N}$, i.e., the sets of faces on the Dirichlet and Neumann boundary, respectively. Analogously, let $\mcF{h,\text{out}}$ and $\mcF{h,\text{in}}$ be the sets of faces on $\mcF{\text{out}}$ and $\mcF{\text{in}}$, respectively.
Next, let $\mcFh = \mcF{h,I} \cup \mcF{h,B}$. For each face $F\in\mcF{h}$ we choose a unit normal vector $\mathbf{n}_{F}$\footnote{This can be done by choosing $\mathbf{n}_{F}$ to coincide with the outward unit normal of the element with lowest element number.}. The unit normal vector on $F\in\mcF{h,B}$ is chosen to coincide with the outward unit normal vector. Furthermore, $\bfn_E$ denotes the unit normal vector pointing out of $E$, such that $\bfn_E\vert_F=\pm \bfn_F$.
\nomenclature{$\mcFh$}{Set of all element faces}%
\nomenclature{$\mcF{h,I}$}{Set of all interior faces}%
\nomenclature{$\mcF{h,B}$}{Set of all boundary faces}%
\nomenclature{$\mcF{h,D}$}{Set of all Dirichlet faces}%
\nomenclature{$\mcF{h,N}$}{Set of all Neumann faces}%
\nomenclature{$\mcF{h,\text{in}}$}{Set of all faces on inflow boundary}%
\nomenclature{$\mcF{h,\text{out}}$}{Set of all faces on outflow (out) boundary}%
\nomenclature{$F$}{Edge in $\mcFh$}%

\paragraph{Piecewise Polynomial Spaces}
Let $P_r(\mathcal{E}_h)$ be the space of piecewise polynomial functions of degree $r$,
\begin{align}
P_r(\mathcal{E}_h) = \{ \varphi\in L^2(\Omega) : \varphi\vert_E \in Q_r(E), E\in \mathcal{E}_h \},
\end{align}
where $Q_r$ denotes the tensor product of polynomial spaces of degree less than or equal to $r$ in each spatial direction\footnote{To be rigorous, $Q_r$ is the space of functions such that when mapped to the reference element are polynomials of degree $r$.}.
We also need the continuous subspace of $P_r(\mathcal{E}_h)$,
\begin{align}
P^C_r(\mathcal{E}_h) = P_r(\mathcal{E}_h) \cap C(\Omega).
\end{align}
Furthermore, we define the space of piecewise polynomial functions on element faces as
\begin{align}
P_r(\mcFh) = \{ \varphi\in L^2(\mcFh) : \varphi\vert_{F} \in \mathcal{Q}_r(F), F\in \mcFh \}.
\end{align}
Moreover, let $P^0_r(\mcFh)$ denote the subspace of $P_r(\mcFh)$ whose functions are zero on the Neumann boundary,
\begin{align}
P^0_r(\mcFh) = \{ \varphi\in P_r(\mcFh) : \phi\vert_{F} = 0, F\in\mcF{h,N} \}.
\end{align}
\nomenclature{$P_r(\mathcal{E}_h)$}{Space of piecewise polynomial functions of degree $r$}%
\nomenclature{$P_r^C(\mathcal{E}_h)$}{Space of continuous and piecewise polynomial functions of degree $r$}%
\nomenclature{$P_r(\mcFh)$}{Space of piecewise polynomial functions of degree $r$ over faces}%
\nomenclature{$P_r^0(\mcFh)$}{Space of piecewise polynomial functions of degree $r$ that vanish on the Neumann boundary}%

\paragraph{Inner Products and Norms}
We denote by $(\cdot,\cdot)_S$ the standard $L^2$ inner product over a domain $S\in\mathbb{R}^d$, or $\langle \cdot,\cdot\rangle_S$ if $S\in\mathbb{R}^{d-1}$. The standard $L^2$ norm over $S$ is denoted $\Vert\cdot\Vert_S$. If $S=\Omega$, we write $(\cdot,\cdot)$ or $\Vert\cdot\Vert$ for simplicity. Furthermore, define the broken inner products and norms
\begin{align}
(v,w)_{\mcEh} = \sum_{E\in\mcEh}(v,w)_E, \qquad &\Vert v\Vert_{\mcEh}^2 
= (v,v)_{\mcEh}
= \sum_{E\in\mcEh} \Vert v\Vert_E^2, \\
\langle v,w \rangle_{\mcFh} 
= \sum_{F\in\mcFh}\langle v,w\rangle_F, \qquad
&\Vert v\Vert_{\mcFh}^2 
= \langle v,v \rangle_{\mcFh}
= \sum_{F\in\mcFh} \Vert v\Vert_F^2.
\end{align}
\nomenclature{$(\cdot,\cdot)_S$}{Standard $L^2$ inner product over domain $S\in\mathbb{R}^d$}%
\nomenclature{$\langle\cdot,\cdot\rangle_S$}{Standard $L^2$ inner product over domain $S\in\mathbb{R}^{d-1}$}%

The measure of a domain $S$ is denoted $\vert S\vert$. In particular this means that $\vert F\vert$ is the length ($d=2$) or area ($d=3$) of a face $F\in\mcFh$, while $\vert E\vert$ is the area ($d=2$) or volume ($d=3$) of an element $E\in\mathcal{E}_h$.
\nomenclature{$\vert \cdot\vert$}{Measure (length, area or volume) of domain}

\paragraph{Average and Jump Operators}
Next, for $s>0$, define
\begin{align}
H^s(\mathcal{E}_h) = \left\{ \varphi \in L^2(\Omega) : \varphi\vert_E\in H^s(E), E\in\mathcal{E}_h\right\}.
\end{align}
Now, let $E_i,E_j\in\mathcal{E}_h$ and $F = \partial E_i\cap\partial E_j\in\mcF{h,I}$ with $\mathbf{n}_{F}$ exterior to $E_i$. Then, for $\mathbf{v}\in \left(H^s(\mathcal{E}_h)\right)^d,\ s>\frac{1}{2}$, we define the average over $F$ as
\begin{align}
\average{\mathbf{v}}_{\theta} &= \theta_F (\mathbf{v}\vert_{E_i})\big\vert_F + (1-\theta_F) (\mathbf{v}\vert_{E_j})\big\vert_F,
\label{eq:average}
\end{align}
where $\theta$ is a given weight with $\theta_F=\theta\vert_F$ and $0<\theta_F<1$. For the standard average $\theta=\frac{1}{2}$, we simply write $\average{\mathbf{v}}$. 
In this work we consider weights $\vartheta$ that depend on $\mathbf{K}$,
\begin{align}
\vartheta_F = \frac{\delta_{Kn}^j}{\delta_{Kn}^i + \delta_{Kn}^j}, \qquad \delta_{Kn}^i = \bfn_F^{\top}\mathbf{K}_i\bfn_F,
\label{eq:harmonic_weights}
\end{align}
where $\delta_{Kn}^i$ is the normal component of $\mathbf{K}$ across $F$ and $\mathbf{K}_i$ is the permeability in $E_i$. This choice of weights was considered by \cite{burman2006domain} for the isotropic case, and later extended to the anisotropic case in \cite{ern2008discontinuous}. Now
\begin{align}
k_e = 2\vartheta_F \delta_{Kn}^i = 2 (1-\vartheta_F) \delta_{Kn}^j = 
2 \frac{\delta_{Kn}^i\delta_{Kn}^j}{\delta_{Kn}^i+\delta_{Kn}^j}
\end{align}
is the harmonic average of the normal component of $\mathbf{K}$ along $F$. Observe that for isotropic permeability, $\mathbf{K}=k\mathbb{I}$, where $\mathbb{I}$ is the identity matrix and $k$ is the directional independent permeability, we have that
\begin{align}
\vartheta_F = \frac{k_j}{k_i+k_j}, \qquad k_e = \frac{2k_ik_j}{k_i+k_j},
\end{align}
and it follows that
\begin{align}
\average{\mathbf{K}\nabla p}_{\vartheta} 
= \frac{k_j}{k_i + k_j}k_i \left((\nabla p)\vert_{E_i}\right)\big\vert_F 
+ \frac{k_i}{k_i + k_j}k_j \left((\nabla p)\vert_{E_j}\right)\big\vert_F 
= k_e\average{\nabla p}.
\end{align}

Next, for $v\in H^s(\mathcal{E}_h),\ s>\frac{1}{2}$, we define the jump over $F$ as
\begin{align}
\jump{v} &= (v\vert_{E_i})\big\vert_F - (v\vert_{E_j})\big\vert_F 
= (v\vert_{E_i})\big\vert_F\bfn_{E_i}\cdot\bfn_F + (v\vert_{E_j})\big\vert_F \bfn_{E_j}\cdot\bfn_F.
\end{align}
\nomenclature{$\{\cdot\}$}{Average operator}%
\nomenclature{$[\cdot]$}{Jump operator}%
For completeness, we extend the average and jump to $F\in\mcF{h,B}$, $F\subset\partial E_i$, by
\begin{align}
\average{\mathbf{v}}_{\theta} &= (\mathbf{v}\vert_{E_i})\big\vert_F, \\
\jump{v} &= (v\vert_{E_i})\big\vert_F.
\end{align}

\subsection{Conservation Properties}

\paragraph{Compatibility Condition}
Consider first the case $\beta=0$. If we multiply Eq.\ \eqref{eq:flow_eq} by a test function $\varphi$, and then integrate and sum the result over each element $E\in\mathcal{E}_h$, we get that
\begin{align}
(\mathbf{u}, \nabla \varphi)_{\mcEh} + 
\langle \mathbf{u}\cdot\mathbf{n}, \jump{\varphi}\rangle_{\mcF{h}} = (q,\varphi)_{\mcEh}.
\end{align}

Let $\mathbf{u}_h$ and $U_h$ be approximations to $\mathbf{u}$ in $\mcEh$ and $\mathbf{u}\cdot\mathbf{n}$ on $\mcFh$, respectively. Furthermore, let the space of test functions be $P_r(\mathcal{E}_h)$. The $r$th order compatibility condition for the velocity approximation reads
\begin{align}
(\mathbf{u}_h, \nabla \varphi)_{\mcEh} + 
\langle U_h, \jump{\varphi}\rangle_{\mcFh} =
(q,\varphi)_{\mcEh}, \quad \forall \varphi\in P_r(\mathcal{E}_h).
\label{eq:compatibility}
\end{align}
\nomenclature{$\mathbf{u}_h$}{Approximation to velocity $\mathbf{u}$}%
\nomenclature{$U_h$}{Approximation to flux $\mathbf{u}\cdot\mathbf{n}$}%

\paragraph{Local Conservation}
$U_h\in L^1(\mcFh)$ is locally conservative if it is $0$th order compatible, i.e., 
\begin{align}
\langle U_h, \jump{\varphi}\rangle_{\mcFh} =
(q,\varphi)_{\mcEh}, \quad \forall \varphi\in P_0(\mathcal{E}_h),
\label{eq:localcons-var}
\end{align}
or, equivalently, on element form,
\begin{align}
\int_{\partial E} U_h \mathbf{n}_{F}\cdot\mathbf{n}_{ E} = \int_E q, \quad \forall E\in\mathcal{E}_h.
\label{eq:local_mc}
\end{align}

\paragraph{Global Conservation}
$U_h\in L^1(\mcFh)$ is globally conservative if it satisfies \eqref{eq:localcons-var} with $\varphi= 1$,
\begin{align}
\langle U_h,1 \rangle_{\mcF{h,B}} = (q,1)_{\mcEh}, \qquad \text{or} \qquad \int_{\partial \Omega} U_h = \int_{\Omega} q.
\label{eq:global_mc}
\end{align}
Global conservation follows from local conservation and flux continuity.

\paragraph{Time Dependent Flow}
For $\beta\neq 0$, denote by $U_h^{n}$ and $p_h^{n}$ the flux and pressure approximation at time $t_n$, respectively, and let $q^n=q(\cdot,t_n)$ and $\beta^n=\beta(\cdot,t_n)$. Now, local conservation is defined as
\begin{align}
\langle U_h^n, \jump{\varphi}\rangle_{\mcFh} =
(q^n - \bar{\partial}_t (\beta^n p_h^n),\varphi)_{\mcEh}, \quad \forall \varphi\in P_0(\mathcal{E}_h),
\label{eq:localcons-var-time}
\end{align}
or, equivalently, on element form,
\begin{align}
\int_{\partial E} U_h^n \mathbf{n}_{F}\cdot\mathbf{n}_{E} = \int_E \left(q^n - \bar{\partial}_t (\beta^n p_h^n) \right), \quad \forall E\in\mathcal{E}_h,
\label{eq:local_mc_time}
\end{align}
where $\bar{\partial}_t$ is the discrete approximation to $\partial_t$ used to solve the flow equation \eqref{eq:flow_eq}, e.g., for backward Euler with step size $\Delta t$, $\bar{\partial}_t p_h^n = \frac{1}{\Delta t}(p_h^n-p_h^{n-1})$.
\nomenclature{$\bar{\partial}_t$}{Discrete approximation to $\partial_t$}%
\nomenclature{$\Delta t$}{Time step size}%
\nomenclature{$p_h$}{Approximation to $p$}

Global conservation is in a similar manner defined as
\begin{align}
\langle U_h^n,1 \rangle_{\mcF{h,B}} = (q^n - \bar{\partial}_t (\beta^n p_h^n),1)_{\mcEh}, \qquad \text{or} \qquad \int_{\partial \Omega} U_h^n = \int_{\Omega} \left(q^n - \bar{\partial}_t (\beta^n p_h^n) \right).
\label{eq:global_mc_td}
\end{align}

\subsection{Numerical Schemes}

We will briefly write down the numerical schemes used to solve the flow and transport problem. 
The flow equation \eqref{eq:flow_eq} is solved with the continuous Galerkin (CG) finite element method, with either strong or weak enforcement of the Dirichlet conditions, while the transport equation \eqref{eq:transport_eq} is solved with a discontinuous Galerkin (DG) finite element method. For time integration we use backward Euler. 

\paragraph{CG Scheme for the Flow Equation}
Let $P^C_r(\mathcal{E}_h;\upsilon)$ denote the subspace of $P^C_r(\mathcal{E}_h)$ such that the trace on $\Gamma_D$ is equal to $\upsilon$,
\begin{align}
P^C_r(\mathcal{E}_h;\upsilon) = \{ \varphi\in P^C_r(\mathcal{E}_h) : \varphi\vert_{\Gamma_D} = \upsilon\}.
\end{align}
\nomenclature{$P^C_r(\mathcal{E}_h;\omega)$}{Space of piecewise polynomial functions of degree $r$ whose trace operator is equal to $\omega$ on the Dirichlet boundary}%
Denote by $\tilde{p}_B$ the projection of $p_B$ into the polynomial space.
Given $p_h^{n-1}$ with $p_h^0=p_0$, the standard CG scheme for Eq.\ \eqref{eq:flow_eq} is to seek $p_h^n\in P^C_r(\mathcal{E}_h;\tilde{p}_B)$ such that
\begin{align}
\left(\beta \bar{\partial}_t p_h^n, \varphi\right)_{\mcEh} + a(p_h^n,\varphi) = l(\varphi), \quad \forall \varphi\in P^C_r(\mathcal{E}_h;0),
\label{eq:cg_scheme_strong}
\end{align}
where the bilinear form $a(p,\psi)$ and the linear functional $l(\psi)$ are defined as follows:
\begin{align}
a(p,\psi) &= (\mathbf{K}\nabla p, \nabla \psi)_{\mcEh},  \\
l(\psi) &= (q,\psi)_{\mcEh} - \langle u_B,\psi \rangle_{\mcF{h,N}}.
\end{align}
\nomenclature{$a(\cdot,\cdot)$}{Bilinear form of the Laplace operator}%
\nomenclature{$l(\cdot$)}{Linear functional for Poisson equation}%
The energy norm associated with the discrete form \eqref{eq:cg_scheme_strong} is given as
\begin{align}
\Vert p \Vert_a^2 = a(p,p) = (\mathbf{K}\nabla p, \nabla p)_{\mcEh}.
\end{align}
\nomenclature{$\Vert \cdot \Vert_a$}{Energy norm, induced from bilinear form $a(\cdot,\cdot)$}%
In the case where $\mathbf{K}$ is the identity matrix and $p$ is sufficiently smooth, the following error estimates hold \cite{wheeler1973priori}, 
\begin{align}
\Vert p_h^n - p(t_n)\Vert \le C(h^{r+1} + \Delta t), \qquad \Vert p_h^n - p(t_n)\Vert_a \le C(h^r + \Delta t),
\label{eq:conv_estimate_pressure}
\end{align}
where $C$ is a constant independent on $h$ and $\Delta t$.

Alternatively, one may impose the Dirichlet conditions weakly by adding a penalty term. Instead of \eqref{eq:cg_scheme_strong} we seek $p_h^n\in P^C_r(\mathcal{E}_h)$ such that
\begin{align}
\left(\bar{\partial}_t (\beta^n p_h^n), \varphi\right)_{\mcEh} + \tilde{a}(p_h^n,\varphi) = \tilde{l}(\varphi), \quad \forall \varphi\in P^C_r(\mathcal{E}_h),
\label{eq:cg_scheme}
\end{align}
where the bilinear form $\tilde{a}(p,\psi)$ and the linear functional $\tilde{l}(\psi)$ are defined as follows:
\begin{align}
\tilde{a}(p,\psi) &= (\mathbf{K}\nabla p, \nabla \psi)_{\mcEh} + J_{D,\sigma}(p,\psi) 
- \langle \mathbf{K}\nabla p\cdot \bfn_F, \psi \rangle_{\mcF{h,D}} - s_{\text{form}} \langle \mathbf{K}\nabla \psi\cdot\bfn_F, p \rangle_{\mcF{h,D}}, \\
\tilde{l}(\psi) &= (q,\psi)_{\mcEh} + J_{D,\sigma}(p_B,\psi) - s_{\text{form}} \langle \mathbf{K}\nabla\psi\cdot\bfn_F, p_B \rangle_{\mcF{h,D}} - \langle u_B,\psi \rangle_{\mcF{h,N}}.
\end{align}
The Dirichlet penalty term $J_{D,\sigma}(p,\psi)$ is defined as
\begin{align}
 J_{D,\sigma}(p,\psi) = \left\langle \frac{r^2\sigma_{F}}{\vert F\vert}p,\psi \right\rangle_{\mcF{h,D}},
\end{align}
where the penalty parameter $\sigma_{F}$ is constant on each face. In our work, we set $s_{\text{form}}=1$, resulting in a symmetric formulation.
\nomenclature{$\sigma_{F}$}{Dirichlet penalty parameter}

\paragraph{Velocity Calculations from CG Solution}

Since $p_h$ is only $C^0$ continuous across element faces, the approximate velocity $\mathbf{u}_h=-\mathbf{K}\nabla p_h$ is undefined on the faces. For this reason, we take the average value and define the velocity approximation from CG as
\begin{align}
	\mathbf{u}_h &= -\mathbf{K}\nabla p_h, \quad \text{on } E\in\mathcal{E}_h, \\
	U_h & =
	\begin{cases}
		-\average{\mathbf{K} \nabla p_h \cdot \mathbf{n}}_{\theta}, & \text{on } F \in \mcF{h,I}, \\
		-\mathbf{K} \nabla p_h \cdot \mathbf{n} + \frac{r^2\sigma_{F}}{\vert F\vert} (p_h-p_B), & \text{on } F\in \mcF{h,D}, \\
		u_B, & \text{on } F\in \mcF{h,N}.
	\end{cases}
\label{eq:cg_flux}
\end{align}
The extra penalty term on the Dirichlet boundary is added to give a globally conservative approximation when boundary conditions are imposed weakly. Notice that this term vanishes for strong boundary conditions as $p_h=p_B$ on $\Gamma_D$. Global conservation for weak boundary conditions follows from \eqref{eq:cg_scheme} with $\varphi= 1$.

\paragraph{Flux Recovery on Dirichlet Boundary}

The flux approximation \eqref{eq:cg_flux} is not globally conservative when the boundary conditions are imposed strongly. However, there is a technique to recover globally conservative fluxes on the Dirichlet boundary  \cite{wheeler1973soh,carey1982derivative,carey1985abf,pehlivanov1992superconvergence,kvamsdal1998variationally,hughes2000continuous,melbo2003goal}. This method is briefly recaptured here. 

Let $P_r^C(\mcF{h,D}) = P^C_r(\mathcal{E}_h) \setminus P^C_r(\mathcal{E}_h;0)$, i.e., the space of continuous functions that are piecewise polynomials of order $r$ with support only on elements with at least one of its faces in $\mcF{h,D}$. The modified continuous Galerkin method now reads: Find $p_h^n\in P^C_r(\mathcal{E}_h;p_B)$ and $\mathcal{U}_h^n\in P_r^C(\mcF{h,D})$ such that
\begin{align}
- \langle\mathcal{U}_h^n, \varphi\rangle_{\mcF{h,D}} = a(p_h^n, \varphi) - l(\varphi) + \left(\bar{\partial}_t (\beta^n p_h^n), \varphi\right), \quad \forall \varphi\in P^C_r(\mathcal{E}_h).
\label{eq:flux_recovery}
\end{align}
\nomenclature{$\mathcal{U}_h$}{Dirichlet flux from recovery operation}%
We can now split this equation into two parts:
\begin{align}
0 &= a(p_h^n, \psi) - l(\psi) + \left(\bar{\partial}_t (\beta^n p_h^n), \psi\right), \quad \forall \psi\in P^C_r(\mathcal{E}_h;0), \label{eq:flux_recovery_1} \\
- \langle\mathcal{U}_h^n, \varphi\rangle_{\mcF{h,D}} &= a(p_h^n, \varphi) - l(\varphi) + \left(\bar{\partial}_t (\beta^n p_h^n), \varphi\right), \quad \forall \varphi\in P_r^C(\mcF{h,D}). \label{eq:flux_recovery_2}
\end{align}
The first equation is the original problem \eqref{eq:cg_scheme_strong}, while the second determines $\mathcal{U}_h^n$, which we can use as an approximation to the flux on the Dirichlet boundary. If we assume that $p_h^n$ is determined from \eqref{eq:cg_scheme_strong} (or equivalently \eqref{eq:flux_recovery_1}), the right hand side of \eqref{eq:flux_recovery_2} is given. Global conservation of the flux $\mathcal{U}_h^n$ follows from \eqref{eq:flux_recovery} with $\varphi= 1$.

\paragraph{DG Scheme for the Transport Equation}

Given $c_h^{n-1}$ with $c_h^0=c_0$, a DG scheme with upwinding \cite{sun2005symmetric} for Eq.\ \eqref{eq:transport_eq} with $\mathbf{D}= 0$ is to seek $c_h^n\in P_r(\mathcal{E}_h)$ satisfying
\begin{align}
\left(\bar{\partial}_t(\phi c_h^n), \varphi\right)_{\mcEh} + b(c_h^n, \varphi) = k(\varphi), \quad \forall \varphi\in P_r(\mathcal{E}_h),
\end{align} 
where the bilinear form $b(c,\psi)$ and the linear functional $k(\psi)$ are defined as follows:
\begin{align}
b(c,\psi) &=  - (c\bfu,\nabla\psi)_{\mcEh} - (q^- c, \psi)_{\mcEh} 
+ \langle c^*\mathbf{u}\cdot\bfn_F, \jump{\psi} \rangle_{\mcF{h,I}} + 
\langle c\bfu\cdot\bfn_F,\psi\rangle_{\mcF{h,\text{out}}} + J_{\sigma}(c,\psi), \\
k(\psi) &= (c_wq^+,\psi)_{\mcEh} - 
\langle c_B\bfu\cdot\bfn_F,\psi\rangle_{\mcF{h,\text{in}}}.
\end{align}
The interior penalty term is defined as
\begin{align}
 J_{\sigma}(c,\psi) = \left\langle \frac{r^2\sigma_{F}}{\vert F\vert}\jump{c},\jump{\psi} \right\rangle_{\mcF{h,I}},
\end{align}
while $c^*$ denotes the upwind concentration, defined as
\begin{align}
c^*\vert_{F} = 
\begin{cases}
(c\vert_{E_i})\vert_{F}, &\text{if } \mathbf{u}\cdot\mathbf{n}_{F} \ge 0, \\
(c\vert_{E_j})\vert_{F}, &\text{if } \mathbf{u}\cdot\mathbf{n}_{F} < 0,
\end{cases}
\end{align}
where $\bfn_F$ is exterior to $E_i$.
Furthermore, $q^-$ and $q^+$ are the negative and positive parts of the source term, respectively, i.e.\ 
\begin{align}
q^- = \min(q,0), \quad q^+ = \max(q,0).
\end{align}

The above scheme assumes that $\mathbf{u}$ is known. Whenever we only have an approximation, e.g.\ from \eqref{eq:cg_flux}, we substitute $\mathbf{u}$ by $\mathbf{u}_h$ and $\mathbf{u}\cdot\mathbf{n}_{F}$ by $U_h$.
In this work, we only consider the lowest order method ($r=0$), for which $\mathbf{u}$ (or an approximation to it) is not needed in the DG scheme since the first term in $b(c,\psi)$ vanishes.

\section{Postprocessing}
\label{sec:postprocessing}

In this section we will define an algorithm to postprocess a given flux approximation to obtain a locally conservative flux. In the derivation, we will assume a time independent problem ($\beta=0$), and then finally, in Section \ref{sec:timedep_pp}, we will show how this approach can be extended to the general case. We will start by defining a discrete divergence operator and its left inverse, and then later show how to use these to construct a locally conservative flux.

\subsection{A Discrete Divergence Operator and its Left Inverse}

\paragraph{Elementwise Definitions}
Let $\Dh:L^1(\mcFh)\rightarrow P_0(\mcEh)$ denote the discrete divergence operator defined by
\begin{align}\label{eq:Dhelem}
\int_E \Dh v = \int_{\partial E} v \bfnFE,
\qquad \forall v \in L^1(\mcFh), \qquad \forall E \in \mcEh.
\end{align}
Next, let $\Dhinv: P_0(\mcEh) \rightarrow P_0^0(\mcFh) $ be a left 
inverse of $\Dh$, i.e., 
\begin{align}\label{eq:Dhinvelem}
\int_E v = \int_{\partial E} (\Dhinv v) \bfnFE,
\qquad \forall v \in {P}_0(\mcEh),
\qquad \forall E\in\mathcal{E}_h.
\end{align}
Both $\Dh$ and $\Dhinv$ are linear, and by definition,
\begin{align}
\Dh \Dhinv v = v, \qquad \forall v \in {P}_0(\mcEh).
\label{eq:leftinv}
\end{align}
Observe that $\Dhinv$ takes functions into $P_0^0(\mcFh)$, so that $\Dhinv v = 0$ on $\Gamma_N$ by definition.

\paragraph{Variational Definitions} 
We note that we have the following equivalent forms of \eqref{eq:Dhelem} and \eqref{eq:Dhinvelem},
\begin{align}\label{eq:Dhvar}
(\Dh v, w)_{\mcEh} &= \langle v , \jump{w}\rangle_{\mcFh}, \qquad \forall w \in P_0(\mcEh),
\\ \label{eq:Dhinvvar}
(v,w)_{\mcEh} &= \langle \Dhinv v, \jump{w}\rangle_{\mcFh}, \qquad \forall w \in P_0(\mcEh).
\end{align}
To see that our definitions are equivalent, we may first test with the 
characteristic function of element $E$ to retrieve the elementwise definition 
from the variational formulations. Conversely we may multiply each elementwise 
equation with a constant and sum all the equations, and use the definition of 
the jump operator to conclude that the variational equations hold.

The left inverse $\Dhinv$ is not uniquely defined since the dimension of 
$P_0^0(\mcFh)$ is larger than the dimension of $P_0(\mcEh)$\footnote{This is true for most grids, and if not, then \eqref{eq:Dhinvvar} is sufficient.}. We may 
determine $\Dhinv v$ uniquely for each $v \in P_0(\mcEh)$ by 
minimizing a given norm of $\Dhinv v$. We next consider minimization with 
respect to a weighted $L^2$ norm.

\paragraph{Minimization} 
We define the weighted $L^2$ inner product and norm as
\begin{equation}
\langle v,w\rangle_{\omega,\mcFh} = \langle \omega v, w\rangle_{\mcFh} 
= \sum_{F \in \mcFh} \langle\omega v,w\rangle_F, 
\qquad \Vert v \Vert^2_{\omega,\mcFh} = \langle v,v\rangle_{\omega,\mcFh},
\label{eq:innerp-w}
\end{equation}
where $\omega |_F = \omega_F >0 $ for each $F \in \mcFh$ is a given bounded weight.
For $\omega= 1$, we have the standard $L^2$ norm.

Introducing the divergence-free subspace, $\pdiv$, defined by
\begin{align}
\pdiv = \{ v\in P_0^0(\mcFh) : D_hv = 0 \},
\end{align}
we have the orthogonal decomposition  
\begin{equation}
P_0^0(\mcFh) = \pdiv \oplus \pdivperp,
\label{eq:decomposition}
\end{equation}
with respect to the weighted inner product \eqref{eq:innerp-w}. 
For $v_0\in\pdiv$ we get from \eqref{eq:Dhvar} that
\begin{align}
0 = (\Dh v_0, w)_{\mcEh} = 
\langle v_0 , \jump{w}\rangle_{\mcFh} = 
\langle v_0, \omega^{-1}\jump{w}]\rangle_{\omega,\mcFh}, 
\qquad \forall w\in P_0(\mcEh).
\end{align}
Observe that the sum over $\mcF{h,N}$ vanishes as $v_0=0$ on $\mcF{h,N}$ by definition. Hence, alternatively, we may define \eqref{eq:decomposition} by
\begin{align}
\pdivperp = \{v \in P_0^0(\mcFh) : v = \omega^{-1}\jump{w} \text{ on } F\in\mcFh\setminus\mcF{h,N}, w\in P_0(\mcEh) \}.
\end{align}
It follows that for $v\in P_0(\mcEh)$, 
\begin{align}
\Dhinv v = z + \omega^{-1}\jump{y} \in \pdiv \oplus \pdivperp, \qquad \text{on } \mcFh\setminus\mcF{h,N},
\end{align}
for some $z\in\pdiv$ and $y\in P_0(\mcEh)$. 
Recall that $\Dhinv v=0$ on $\mcF{h,N}$. 
Using orthogonality and \eqref{eq:Dhinvvar} we obtain
\begin{align}
\begin{split}
(v,w)_{\mcEh} = \langle\Dhinv v, \jump{w}\rangle_{\mcFh} 
&= \langle z + \omega^{-1}\jump{y},\jump{w}\rangle_{\mcFh\setminus\mcF{h,N}}  \\
&= \langle z + \omega^{-1}\jump{y},\omega^{-1}\jump{w}\rangle_{\omega,\mcFh\setminus\mcF{h,N}} \\
&= \langle \omega^{-1}\jump{y},\omega^{-1}\jump{w}\rangle_{\omega,\mcFh\setminus\mcF{h,N}}, 
\qquad \forall w \in P_0(\mcEh).
\end{split}
\end{align}
%Since $([\cdot],[\cdot])_{\mcFh}$ is coercive, $[y]$ is uniquely determined. 
Furthermore, since  
\begin{equation}
\Vert\Dhinv v \Vert^2_{\omega,\mcFh} =  \Vert z + \omega^{-1}\jump{y}\Vert^2_{\omega,\mcFh\setminus\mcF{h,N}} = \Vert z \Vert^2_{\omega,\mcFh\setminus\mcF{h,N}} + \Vert\omega^{-1}\jump{y} \Vert^2_{\omega,\mcFh\setminus\mcF{h,N}} 
\end{equation}
we see that minimizing the norm $\Vert\Dhinv v \Vert^2_{\omega,\mcFh}$ enforces 
$z = 0$.

We conclude that, subject to minimization,
\begin{align}
\Dhinv v = 
\begin{cases}
0, & \text{on } \mcF{h,N}, \\
\omega^{-1}\jump{y}, & \text{otherwise}.
\end{cases}
\label{eq:correction}
\end{align}
where $y\in P_0(\mcEh)$ is the solution to the variational problem
\begin{align}
d(y,w) = (v,w)_{\mcEh}, \qquad \forall w\in P_0(\mcEh).
\label{eq:Dhinv_variational}
\end{align}
The bilinear form $d(y,w) : P_0(\mcEh) \times P_0(\mcEh) \rightarrow \mathbb{R}$ is defined as
\begin{align}
d(y,w) = \langle\omega^{-1}\jump{y},\omega^{-1}\jump{w}\rangle_{\omega,\mcFh\setminus\mcF{h,N}} = 
\langle\omega^{-1}\jump{y},\jump{w}\rangle_{\mcFh\setminus\mcF{h,N}}.
%\label{eq:Dhinv-form}
\end{align}
We prove later, in Lemma \ref{lemma:unique}, that \eqref{eq:Dhinv_variational} admits a unique solution. The choice of weights is discussed in Section \ref{sec:pp_weights}.

\paragraph{The Operator ${\Dhinv \Dh}$} 

Let $v\in L^1(\mcFh)$. From the definitions \eqref{eq:Dhvar} and \eqref{eq:Dhinvvar}  
we have the following identity 
\begin{align}\label{eq:proj-a}
\langle \Dhinv \Dh v - v, \jump{w}\rangle_{\mcFh} &= 0, \qquad \forall w\in P_0(\mcEh),
\end{align}
since
\begin{align}
\langle\Dhinv \Dh v, \jump{w}\rangle_{\mcFh} &= (\Dh v, w)_{\mcEh} = \langle v,\jump{w}\rangle_{\mcFh}.
\end{align}
Now using \eqref{eq:correction} we know that there is an
$y\in P_0(\mcEh)$ such that $\Dhinv \Dh v = \omega^{-1} \jump{y}$ on $\mcFh\setminus\mcF{h,N}$ (and $\Dhinv \Dh v = 0$ on $\mcF{h,N}$). From \eqref{eq:proj-a} it follows that
\begin{align}
\begin{split}
0 &= \langle\omega^{-1} \jump{y} - v, \jump{w}\rangle_{\mcFh\setminus\mcF{h,N}} + \langle - v, \jump{w}\rangle_{\mcF{h,N}} \\
&= \langle\omega^{-1} \jump{y} - v, \omega^{-1}\jump{w}\rangle_{\omega,\mcFh\setminus\mcF{h,N}} + \langle - v, \omega^{-1}\jump{w}\rangle_{\omega,\mcF{h,N}}
, \qquad \forall w\in P_0(\mcEh).
\label{eq:proj-ip}
\end{split}
\end{align}
Now, if $v=0$ on $\mcF{h,N}$, the second term vanish. If we denote by $L^{1}_0(\mcFh)$ the subspace of $L^{1}(\mcFh)$ with functions that are zero on $\mcF{h,N}$, i.e.,
\begin{align}
L^{1}_0(\mcFh) = \left\{ v\in L^1(\mcFh) : v\vert_F=0, F\in \mcF{h,N} \right\},
\end{align}
we conclude from \eqref{eq:proj-ip} that the operator $\Dhinv \Dh : L^{1}(\mcFh) \rightarrow P_0(\mcFh)$ is the orthogonal projection of 
$L^{1}_0(\mcFh)$ onto the subspace $\pdivperp$
with respect to the weighted inner product $\langle \cdot, \cdot\rangle_{\omega,\mcFh}$. In particular, 
it follows that 
\begin{equation}
\Vert\Dhinv \Dh \Vert = 1.
\end{equation}

\paragraph{Remark} 
An alternative approach to obtain \eqref{eq:correction} and \eqref{eq:Dhinv_variational} is to use Lagrangian multipliers for minimizing $\Vert \Dhinv v \Vert^2_{\omega,\mcFh}$ subject to the constraints \eqref{eq:Dhinvvar}. If we let $x=\Dhinv v$, the Lagrangian reads
\begin{equation}
L(x,\lambda) = \frac{1}{2}\Vert x \Vert^2_{\omega,\mcFh} 
- \langle x, \jump{\lambda}\rangle_{\mcFh}
+ (v,\lambda)_{\mcEh},
\end{equation}
with corresponding derivative $DL: P_0^0(\mcFh) \times P_0(\mcEh)
\rightarrow \mathbb{R}$ given by
\begin{equation}
DL(x,\lambda)(\delta x, \delta\lambda) 
= \langle \omega x,\delta x\rangle_{\mcFh} - \langle\delta x,\jump{\lambda}\rangle_{\mcFh}
+ \langle x, \jump{\delta \lambda}\rangle_{\mcFh}
+ (v,\delta \lambda)_{\mcEh}.
\end{equation}
By requiring $DL(x,\lambda)(\delta x, \delta\lambda) = 0, \forall \delta x\in P_0(\mcFh), \forall \delta \lambda\in P_0(\mcEh)$, we end up with the same result as \eqref{eq:correction} and \eqref{eq:Dhinv_variational}.

\subsection{Postprocessing Algorithm} 
In the following, let $U_h\in L^1(\mcFh)$ be some approximation to the flux $\bfu\cdot\bfn$ on $\mcFh$. We define a residual operator, $\mathcal{R}: L^1(\mcFh) \rightarrow P_0(\mcEh)$, to measure to discrepancy from local conservation,
\begin{align}
\mathcal{R}(U_h) = P_0 q - \Dh U_h,
\label{eq:residual}
\end{align}
where $P_0$ is the $L^2$ projection onto $P_0(\mcEh)$, i.e., $(P_0q)\vert_E = \vert E \vert^{-1} \int_E q$. Clearly, $U_h$ is locally conservative if and only if $\mathcal{R}(U_h) = 0$, and $U_h$ is globally conservative if and only if $\int_{\Omega} \mathcal{R}(U_h) = 0$.

The next lemma shows how the left inverse $\Dhinv$ can 
be used to project an arbitrary flux approximation to a locally conservative flux. 

\begin{lemma}
\label{lemma:cons}
Given $U_h\in L^1(\mcFh)$, the modified flux
\begin{align}\label{eq:def-postprocessing}
V_h = U_h +\Dhinv(\mathcal{R}(U_h)) = U_h + \Dhinv (P_0q - \Dh U_h)
\end{align}
is locally conservative.
\end{lemma}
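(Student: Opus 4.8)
The plan is to verify local conservation directly through its residual characterization. Recall from \eqref{eq:residual} that $V_h$ is locally conservative if and only if $\mathcal{R}(V_h)=P_0q-\Dh V_h=0$, so it suffices to show that $\Dh V_h = P_0q$ as an element of $P_0(\mcEh)$. First I would observe that the residual $\mathcal{R}(U_h)=P_0q-\Dh U_h$ lies in $P_0(\mcEh)$, since $P_0q$ is by definition the $L^2$ projection onto $P_0(\mcEh)$ and $\Dh$ maps $L^1(\mcFh)$ into $P_0(\mcEh)$ by \eqref{eq:Dhelem}. This is the key structural point that makes the left inverse applicable: the argument of $\Dhinv$ in \eqref{eq:def-postprocessing} is exactly the sort of piecewise constant function for which the identity \eqref{eq:leftinv} is asserted.

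Next I would apply $\Dh$ to the definition \eqref{eq:def-postprocessing} and use linearity of $\Dh$ to write
\[
\Dh V_h = \Dh U_h + \Dh\Dhinv(\mathcal{R}(U_h)).
\]
Since $\mathcal{R}(U_h)\in P_0(\mcEh)$, the left inverse property \eqref{eq:leftinv} yields $\Dh\Dhinv(\mathcal{R}(U_h))=\mathcal{R}(U_h)=P_0q-\Dh U_h$. Substituting, the two $\Dh U_h$ contributions cancel and I obtain $\Dh V_h = P_0q$, whence $\mathcal{R}(V_h)=P_0q-\Dh V_h=0$.

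To connect this with the elementwise statement \eqref{eq:local_mc}, I would note that $\Dh V_h = P_0q$ together with \eqref{eq:Dhelem} and the integral identity $\int_E P_0q = \int_E q$ gives
\[
\int_{\partial E} V_h\,\bfnFE = \int_E \Dh V_h = \int_E P_0q = \int_E q, \qquad \forall E\in\mcEh,
\]
which is precisely local conservation.

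Honestly, there is no serious obstacle here: the lemma is essentially a one-line consequence of the linearity of $\Dh$ and its defining left-inverse property \eqref{eq:leftinv}. The single point requiring care—and the one I would make explicit—is confirming that $\mathcal{R}(U_h)$ genuinely lands in $P_0(\mcEh)$, because \eqref{eq:leftinv} is only valid for arguments in that space; for a general $v\in L^1(\mcFh)$ one has $\Dhinv\Dh v\neq v$ (indeed $\Dhinv\Dh$ is the nontrivial weighted projection onto $\pdivperp$ discussed earlier). I would also be careful to invoke the integral identity $\int_E P_0q=\int_E q$ rather than claiming $P_0q=q$ pointwise, since the source $q$ need not be piecewise constant.
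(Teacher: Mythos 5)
Your proposal is correct and follows essentially the same route as the paper: both arguments apply $\Dh$ to the definition of $V_h$ and invoke the left-inverse identity \eqref{eq:leftinv} on the piecewise constant residual so that the $\Dh U_h$ and $P_0q$ terms cancel. The only cosmetic difference is that the paper distributes $\Dh\Dhinv$ over $P_0q$ and $\Dh U_h$ separately, whereas you apply it to $\mathcal{R}(U_h)$ as a whole; your added remarks that $\mathcal{R}(U_h)\in P_0(\mcEh)$ and that $\int_E P_0q=\int_E q$ are correct and harmless.
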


\begin{proof} Using the fact that $\Dhinv$ is a left inverse of $\Dh$ 
we obtain
\begin{align}
\begin{split}
\mathcal{R}(V_h) = P_0q - \Dh V_h 
&= P_0q - \Dh\left( U_h + \Dhinv(P_0q - D_h U_h) \right) \\
&= P_0q - \Dh U_h - \Dh\Dhinv P_0q + \Dh\Dhinv\Dh U_h \\
&= P_0q - \Dh U_h - P_0q + \Dh U_h \\
&= 0.
\end{split}
\end{align}
\end{proof}

Applying \eqref{eq:correction} and \eqref{eq:Dhinv_variational}, we may summarize the postprocessing algorithm as in the box below. The postprocessing steps and the different operators are illustrated in Fig.\ \ref{fig:pp_operators}.

\begin{framed}
\noindent\textbf{Postprocessing algorithm}\\[2mm]
Given $U_h\in L^1(\mcFh)$, the postprocessed flux is defined as
\begin{align}
V_h = U_h + \Dhinv(\mathcal{R}(U_h)) = 
\begin{cases}
U_h , & \text{on } \mcF{h,N}, \\
U_h + \omega^{-1} \jump{y}, & \text{on } \mcFh\setminus\mcF{h,N},
\end{cases}
\label{eq:pp_def}
\end{align}
where $y\in P_0(\mcEh)$ is the unique solution to
\begin{align}
d(y,w) = (\mathcal{R}(U_h), w)_{\mcEh}, \qquad \forall w\in P_0(\mcEh),
\label{eq:pp_variational}
\end{align}
with
\begin{align}
d(y,w) = \langle\omega^{-1}\jump{y},\jump{w}\rangle_{\mcFh\setminus\mcF{h,N}}.
%\label{eq:Dhinv-form}
\end{align}
\end{framed}

\begin{figure}[bpt]
\centering
\includegraphics[width=0.7\textwidth]{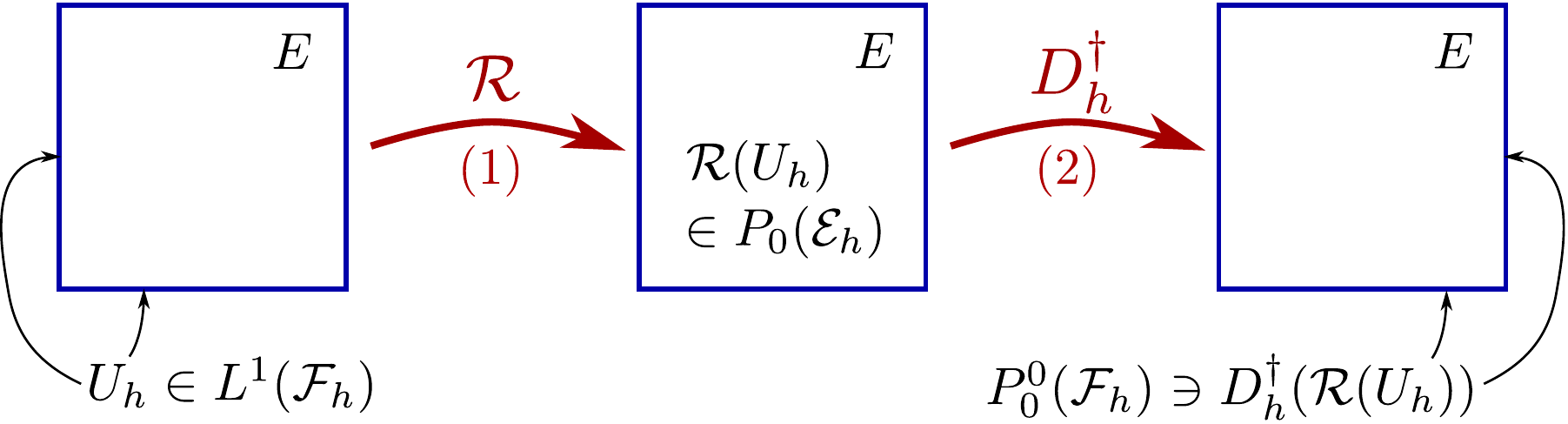}
\caption{Illustration of the postprocessing process. A non-conservative flux $U_h$ is taken as input. First the operator $\mathcal{R}$ calculates the element residuals (1). Then the operator $\Dhinv$ projects the residuals onto the element faces such that the updated flux $V_h = U_h - \Dhinv(\mathcal{R}(U_h))$ is locally conservative (2). This is a global process, although illustrated on a single element $E$ here for simplicity.}
\label{fig:pp_operators}
\end{figure}

\begin{lemma}
\label{lemma:unique}
The variational problem \eqref{eq:pp_variational} has a unique solution.
\end{lemma}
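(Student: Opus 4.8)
The plan is to recognize that \eqref{eq:pp_variational} is a finite-dimensional linear problem posed on $P_0(\mcEh)$ with a symmetric bilinear form, so that existence and uniqueness both reduce to showing that $d(\cdot,\cdot)$ is positive definite. Once positive definiteness is established, the linear operator induced by $d$ is invertible and the right-hand side functional $w\mapsto(\mathcal{R}(U_h),w)_{\mcEh}$ is represented by a unique $y$; in finite dimensions this is simply the invertibility of a symmetric positive definite matrix, so no abstract Lax--Milgram machinery is needed.

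First I would record that $d$ is symmetric and positive semi-definite: since each weight satisfies $\omega_F>0$,
\begin{align}
d(y,y) = \langle \omega^{-1}\jump{y},\jump{y}\rangle_{\mcFh\setminus\mcF{h,N}} = \sum_{F\in\mcFh\setminus\mcF{h,N}} \omega_F^{-1}\Vert\jump{y}\Vert_F^2 \ge 0.
\end{align}
The crux is then to upgrade this to strict positivity, i.e.\ to show that $d(y,y)=0$ forces $y=0$. If $d(y,y)=0$ then $\jump{y}=0$ on every face of $\mcFh\setminus\mcF{h,N} = \mcF{h,I}\cup\mcF{h,D}$. Vanishing of the jump across every interior face means the piecewise constant $y$ takes the same value on neighbouring elements, so (assuming $\Omega$ is connected) $y\equiv c$ is a global constant. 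Finally, on any Dirichlet face $F\subset\partial E_i$ the jump is the one-sided trace $\jump{y}=(y\vert_{E_i})\vert_F = c$, whence $c=0$ and therefore $y=0$. This shows $d$ is positive definite and the lemma follows.

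The step I expect to be the genuine obstacle is the pure Neumann situation $\mcF{h,D}=\emptyset$, where the final pinning argument breaks down: constants then lie in the kernel of $d$, so $d$ is only positive semi-definite and \eqref{eq:pp_variational} is not uniquely solvable as literally stated. In that case one should restrict to the quotient $P_0(\mcEh)/\mathbb{R}$ (for instance by imposing $\int_\Omega y = 0$), on which $d$ is positive definite; existence then additionally requires the compatibility condition $\int_\Omega \mathcal{R}(U_h)=0$, which is exactly global conservation of $U_h$. It is worth emphasising that this non-uniqueness is harmless for the postprocessing itself: the correction $\omega^{-1}\jump{y}$ entering $V_h$ in \eqref{eq:pp_def} depends on $y$ only through its jumps, which are unchanged by adding a constant, so the postprocessed flux $V_h$ is well-defined regardless of whether Dirichlet faces are present.
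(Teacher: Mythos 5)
Your proposal is correct and follows essentially the same route as the paper's own proof: establish that $d(w,w)=0$ forces $w$ to be constant, use a Dirichlet face to pin the constant to zero when $\Gamma_D\neq\emptyset$, and in the pure Neumann case observe that uniqueness holds only up to a constant, that the right-hand side is compatible (your "global conservation of $U_h$" coincides with the paper's condition $\int_\Omega q=\int_{\Gamma_N}u_B$ since $U_h=u_B$ there), and that the postprocessed flux is nevertheless well defined because only $\jump{y}$ enters \eqref{eq:pp_def}. Your treatment is if anything slightly more explicit than the paper's (connectedness of $\Omega$, the quotient space $P_0(\mcEh)/\mathbb{R}$), but it is the same argument.
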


\begin{proof}
We need to prove coersivity of the bilinear form $d(\cdot,\cdot)$. If $w\in P_0(\mcEh)$ and $d(w,w) = \Vert \omega^{-1}\jump{w} \Vert_{\omega,\mcFh} = 0$ then $w$ is a constant function. If $\Gamma_D$ is nonempty then $\jump{w}\vert_F = w_F$ for $F\subset \Gamma_D$, so that $w=0$. Otherwise, if $\Gamma_D$ is empty, then $w$ may be a nonzero constant $C$, but then the right hand side
\begin{align}
(\mathcal{R}(U_h), C)_{\mcEh}
= C \int_{\Omega} \mathcal{R}(U_h)
= C \left( \int_{\Omega} q - \int_{\Gamma_N} u_B \right)
= 0,
\end{align} 
since we require $\int_{\Gamma_N} u_B = \int_{\Omega} q$ for the pure Neumann problem to be well posed. This shows uniqueness up to a constant. Since we only need the jump in $y$, our algorithm is well defined.
\end{proof}

\paragraph{Matrix Formulation}

Let $\chi_i$, for $i=1,2,\ldots,N$, denote the characteristic functions, i.e., $\chi_i=1$ for $x\in E_i$ and $0$ otherwise. This is a basis for $P_0(\mcEh)$,  so we can write $y=\sum_{i=1}^Ny_i\chi_i$ and express the variational formulation \eqref{eq:pp_variational} in matrix form
\begin{align}
\mathbf{A}\mathbf{y} = \mathbf{r},
\label{eq:pp_matrix_system}
\end{align}
where $\mathbf{A}\in\mathbb{R}^{N\times N}$ is the matrix with entries
\begin{align}
A_{ij} &= d(\chi_j,\chi_i) = 
\langle \omega^{-1}\jump{\chi_j},\jump{\chi_i} \rangle_{\mcFh} =
\begin{cases}
-\omega_F^{-1} \vert F\vert, &  i\neq j, F=\partial E_i\cap\partial E_j, \\[3mm]
\displaystyle
\sum_{F\in\partial E_i\setminus \Gamma_N} \omega_F^{-1} \vert F\vert, & i=j.
\end{cases}
\label{eq:pp_matrix_entries}
\end{align}
Furthermore, $\mathbf{y}\in \mathbb{R}^N$ is the vector with entries $y_i$, and $\mathbf{r}\in \mathbb{R}^N$ is the vector of residuals, i.e., with entries
\begin{align}
r_i = \left(\mathcal{R}(U_h), \chi_i\right) = \int_{E_i}q - \int_{\partial E_i} U_h\bfn_F\cdot\bfn_{E_i}.
\end{align}
Observe that $\mathbf{A}$ is symmetric with non-zero pattern equal to the grid connectivity.

\subsection{Error Estimate}

To measure the error on $\mcFh$ we introduce the face norm
\begin{align}
\Vert v \Vert_{h,\mcFh}^2 = \sum_{F\in\mcFh} h \Vert v \Vert_{F}^2.
\end{align}
This norm has the advantage that $\Vert 1 \Vert_{\mcFh,h}$ is bounded as $h\rightarrow 0$.
Furthermore, we use the notation $x \lesssim y$ whenever there exists a positive constant $C$ independent on $h$ such that $x\le Cy$.

\begin{lemma} 
\label{lemma:ee}
If $U_h$ is an approximation to the exact flux $U = \bfu \cdot \bfn$ such that 
\begin{equation}
\Vert U - U_h \Vert_{h,\mcFh} \lesssim  h^s,
\end{equation}
then the local conservation residual satisfies the estimate
\begin{equation}\label{eq:residual-est}
\Vert \mathcal{R}(U_h) \Vert_{\mcEh} \lesssim h^{s-1},
\end{equation}
and the postprocessed locally conservative flux
$V_h$, defined by (\ref{eq:pp_def}), satisfies 
\begin{equation}\label{eq:normalvel-estimate}
\Vert U - V_h \Vert_{h,\mcFh} \lesssim h^{s}.
\end{equation}
\end{lemma}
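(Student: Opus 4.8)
The plan is to reduce both claims to the flux error $e = U - U_h$ and to exploit that, thanks to the minimization, the postprocessing operator $\Dhinv\Dh$ is an orthogonal projection. First I would record that the exact flux is already discretely conservative. Integrating the steady Darcy equation $\nabla\cdot\bfu = q$ (with $\bfu = -\mathbf{K}\nabla p$) over each $E\in\mcEh$ and using that $\bfn_E|_F = (\bfnFE)\,\bfn_F$, I get $\int_E \Dh U = \int_{\partial E} U\,\bfnFE = \int_{\partial E}\bfu\cdot\bfn_E = \int_E q$, so $\Dh U = P_0 q$ and hence $\mathcal{R}(U) = 0$. By linearity of $\mathcal{R}$ and $\Dh$ this yields the key identity $\mathcal{R}(U_h) = \Dh(U - U_h) = \Dh e$, which drives everything that follows.

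For the residual bound \eqref{eq:residual-est} I would estimate $\Dh$ facewise. On each element $(\Dh e)|_E = |E|^{-1}\int_{\partial E} e\,\bfnFE$ is constant, so Cauchy--Schwarz gives $\Vert\Dh e\Vert_E^2 \le |E|^{-1}|\partial E|\,\Vert e\Vert_{\partial E}^2$, and shape-regularity together with quasi-uniformity yield $|\partial E|/|E|\lesssim h^{-1}$. Summing over elements (each face shared by at most two) gives $\Vert\Dh e\Vert_{\mcEh}\lesssim h^{-1/2}\Vert e\Vert_{\mcFh} = h^{-1}\Vert e\Vert_{h,\mcFh}$, where I used $\Vert\cdot\Vert_{h,\mcFh}^2 = h\,\Vert\cdot\Vert_{\mcFh}^2$. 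With the hypothesis $\Vert e\Vert_{h,\mcFh}\lesssim h^s$ this is exactly $\Vert\mathcal{R}(U_h)\Vert_{\mcEh}\lesssim h^{s-1}$.

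The delicate point is the flux estimate \eqref{eq:normalvel-estimate}. Writing $U - V_h = e - \Dhinv\Dh e$, the crude route — triangle inequality plus a stability bound $\Vert\Dhinv z\Vert_{h,\mcFh}\lesssim\Vert z\Vert_{\mcEh}$ applied to $z=\mathcal{R}(U_h)$ — would only deliver $h^{s-1}$ and lose a full power of $h$. To recover the optimal rate I would instead use that minimization makes $\Dhinv\Dh$ the $\omega$-orthogonal projection of $L^1_0(\mcFh)$ onto $\pdivperp$, so it is non-expansive in $\Vert\cdot\Vert_{\omega,\mcFh}$. The second ingredient is that $e$ really lies in $L^1_0(\mcFh)$: on $\mcF{h,N}$ the flux $U_h$ is set to the exact Neumann datum $u_B = U$, so $e = 0$ there, and consequently $U - V_h = (I - \Dhinv\Dh)e$ is the complementary projection of $e$. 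This gives $\Vert U - V_h\Vert_{\omega,\mcFh}\le\Vert e\Vert_{\omega,\mcFh}$, i.e.\ postprocessing does not increase the weighted error.

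Finally I would convert norms. Since $\mathbf{K}$, and therefore the effective face permeability defining $\omega$, is bounded above and below by $h$-independent constants, $\Vert\cdot\Vert_{\omega,\mcFh}$ and $\Vert\cdot\Vert_{\mcFh}$ are equivalent, while $\Vert\cdot\Vert_{h,\mcFh} = h^{1/2}\Vert\cdot\Vert_{\mcFh}$ intertwines them with the target norm; chaining gives $\Vert U - V_h\Vert_{h,\mcFh}\lesssim\Vert U - U_h\Vert_{h,\mcFh}\lesssim h^s$. The main obstacle is precisely this one-power gain: it cannot come from boundedness of $\Dhinv$ alone but must use the orthogonality (minimization) built into $\Dhinv$, and it relies on placing $e$ in $L^1_0(\mcFh)$ so that $\Dhinv\Dh$ genuinely acts as a projection. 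Heterogeneity enters only through the $h$-independent constant in the weighted/standard norm equivalence.
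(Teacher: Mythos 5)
Your proposal is correct and follows essentially the same route as the paper: both reduce to the identity $\mathcal{R}(U_h)=\Dh(U-U_h)$, establish the scaling bound $\Vert \Dh v\Vert_{\mcEh}\lesssim h^{-1/2}\Vert v\Vert_{\mcFh}$, and obtain the optimal rate for $V_h$ from the fact that $U-U_h$ vanishes on the Neumann faces so that $\Dhinv\Dh$ acts as an orthogonal projection there. The only cosmetic differences are that you prove the $\Dh$ bound by a direct elementwise Cauchy--Schwarz argument where the paper tests \eqref{eq:Dhvar} with $w=\Dh v$ and invokes $\Vert\jump{w}\Vert_{\partial E}\lesssim h^{-1/2}\Vert w\Vert_E$, and that you use non-expansiveness of the complementary projection $I-\Dhinv\Dh$ (plus the $h$-independent equivalence of the $\omega$-weighted and standard face norms) where the paper uses the triangle inequality together with $\Vert\Dhinv\Dh\Vert=1$.
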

\begin{proof} We have 
\begin{equation}
\Vert \mathcal{R}(U_h) \Vert_{\mcEh}
=
\Vert P_0 q - \Dh U_h \Vert_{\mcEh}  
= 
\Vert \Dh U - \Dh U_h \Vert_{\mcEh}
= 
\Vert \Dh (U - U_h) \Vert_ {\mcEh} 
\lesssim 
h^{-1/2} \Vert U - U_h \Vert_ {\mcFh}.
\end{equation}
Here we have used that $P_0q=\Dh U$ and the bound $\Vert \Dh v \Vert_{\mcEh} \lesssim h^{-1/2} \Vert v \Vert_{\mcFh}$ 
which follows by setting $w= \Dh v$ in (\ref{eq:Dhvar}), 
\begin{equation}
\Vert \Dh v \Vert^2_{\mcEh} 
= (v, [\Dh v])_{\mcFh} 
\leq
\Vert v \Vert_{\mcFh} \Vert [\Dh v ] \Vert_{\mcFh} 
\lesssim 
\Vert v \Vert_{\mcFh} h^{-1/2} \Vert \Dh v  \Vert_{\mcEh}.
\end{equation} 
In the last step we used the triangle inequality and the fact that 
$\Vert [w] \Vert_{\partial E} \lesssim h^{-1/2} \Vert w \Vert_E$ for $ w \in P_0(E)$.
The bound on $\Vert \mathcal{R}(U_h) \Vert_{\mcEh}$ \eqref{eq:residual-est} follows since
\begin{align}
\Vert v \Vert_{\mcFh} \lesssim h^{-1/2} \Vert v \Vert_{h,\mcFh}.
\end{align}

Furthermore, we have
\begin{align}
\begin{split}
\Vert U - V_h \Vert_{h,\mcFh} 
&= 
\Vert U - (U_h + \Dhinv (P_0q - \Dh U_h )\Vert_{h,\mcFh} \\
&= 
\Vert (U - U_h) - \Dhinv \Dh U + \Dhinv \Dh U_h )\Vert_{h,\mcFh} \\
&\leq \Vert U - U_h \Vert_{h,\mcFh} + \Vert \Dhinv \Dh (U - U_h )\Vert_{h,\mcFh} 
\\
&\lesssim   
\Vert U - U_h \Vert_{h,\mcFh} 
\end{split}
\end{align}
where we used that $U-U_h$ is zero on the Neumann boundary so that $\Dhinv \Dh$ is a projection.
\end{proof}

The following main result follows directly from Lemma \ref{lemma:cons}, \ref{lemma:unique} and \ref{lemma:ee}.

\begin{theorem}
\label{theorem}
The postprocessed flux as defined by Eq.\ \ref{eq:pp_def} is (i) locally conservative; (ii) uniquely defined; and (iii) has the same convergence order as the original flux.
\end{theorem}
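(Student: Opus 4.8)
The plan is to treat the theorem as a corollary that simply bundles the three preceding lemmas, since each of the three assertions corresponds to exactly one of them. First I would dispatch claim (i): the flux $V_h$ defined in \eqref{eq:pp_def} is precisely the modified flux $U_h + \Dhinv(\mathcal{R}(U_h))$ of Lemma \ref{lemma:cons}, so local conservation, $\mathcal{R}(V_h) = 0$, is immediate from that lemma. The only ingredient is that $\Dhinv$ is a left inverse of $\Dh$, i.e.\ \eqref{eq:leftinv}, which collapses the cross terms via $\Dh\Dhinv\Dh U_h = \Dh U_h$ and $\Dh\Dhinv P_0 q = P_0 q$.

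For claim (ii) I would appeal to Lemma \ref{lemma:unique}: the correction $\omega^{-1}\jump{y}$ appearing in \eqref{eq:pp_def} is determined by the variational problem \eqref{eq:pp_variational}, whose bilinear form $d(\cdot,\cdot)$ is coercive on $P_0(\mcEh)$ modulo constants. The one point worth spelling out explicitly is that even when $\Gamma_D = \emptyset$, where $y$ is only unique up to an additive constant, the postprocessed flux $V_h$ is still well defined: on $\mcFh\setminus\mcF{h,N}$ only the jump $\jump{y}$ enters \eqref{eq:pp_def}, and since in the pure-Neumann case $\mcFh\setminus\mcF{h,N}$ consists of interior faces only, we have $\jump{\text{const}} = 0$ there. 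Hence adding a constant to $y$ leaves $V_h$ unchanged, and unique solvability for the jump $\jump{y}$ — which is all the algorithm uses — is exactly what Lemma \ref{lemma:unique} guarantees.

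Claim (iii) follows from the error estimate \eqref{eq:normalvel-estimate} of Lemma \ref{lemma:ee}: under the hypothesis $\Vert U - U_h\Vert_{h,\mcFh}\lesssim h^s$ one obtains $\Vert U - V_h\Vert_{h,\mcFh}\lesssim h^s$, so $V_h$ converges at the same rate $s$ as the input flux. The engine behind this is the projection property $\Vert\Dhinv\Dh\Vert = 1$ together with the fact that $U - U_h$ vanishes on $\mcF{h,N}$, placing it in $L^1_0(\mcFh)$ so that $\Dhinv\Dh$ acts as the norm-one orthogonal projection onto $\pdivperp$. Since all three pieces are already proven, no genuinely new argument is needed; the only step deserving care is the well-definedness remark in (ii), which I would state openly rather than leave implicit inside Lemma \ref{lemma:unique}.
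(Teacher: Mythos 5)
Your proposal is correct and follows exactly the paper's route: the theorem is stated as a direct consequence of Lemmas \ref{lemma:cons}, \ref{lemma:unique} and \ref{lemma:ee}, with each claim matched to one lemma as you do. Your extra remark on well-definedness up to a constant in the pure-Neumann case is also present in the paper, inside the proof of Lemma \ref{lemma:unique}.
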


\subsection{Alternative Approach}

An alternative approach to the one depicted above is to work on the element level. After realizing that $\Dhinv v\in \pdivperp$, one may construct a basis for $\pdivperp$. The set $\{\varphi_i\}_{i=1}^N$, with
\begin{align}
\varphi_i = 
\begin{cases}
- \omega_F^{-1} \bfn_F \cdot \bfn_{E_i}, & x\in F \subset \partial E_i \setminus \Gamma_{N}, \\
0, & \text{otherwise},
\end{cases}
\label{eq:basis}
\end{align}
is a basis for $\pdivperp$. We can then write
\begin{align}
\Dhinv v = \sum_{i=1}^N \alpha_i \varphi_i.
\end{align}
From the requirement of $\Dhinv$ given by \eqref{eq:leftinv}, we get that
\begin{align}
\Dh\Dhinv v = D_h\left( \sum_{i=1}^N \alpha_i \varphi_i \right) = \sum_{i=1}^N \alpha_i \Dh \varphi_i = v, 
\qquad \forall v\in P_0(\mcEh).
\end{align}
This is a linear system of $N$ equations that uniquely determines the coefficients $\alpha_i$ for a given $v$.

We remark that this is the approach presented in \cite{sun2006projections}, but for the pure Dirichlet problem and only for the case where $\Dhinv v$ is minimized in the standard $L^2$ norm. The basis used in \cite{sun2006projections} is
\begin{align}
\tilde{\varphi}_i = 
\begin{cases}
-\frac{\vert E_i\vert}{\vert F\vert} \bfn_F \cdot \bfn_{E_i}, & x\in F \subset \partial E_i \\
0, & \text{otherwise}.
\end{cases}
\label{eq:basis_sun}
\end{align}
One can show that this is a basis only when $\vert F\vert = C$ for all $F\in\mcFh$, i.e., when all faces are equally large.

\subsection{Choice of Weights}
\label{sec:pp_weights}

An important parameter in our postprocessing method is the choice of weights. Using $\omega= 1$ will result in minimization in the standard $L^2$ norm. This means that the correction $\Dhinv(\mathcal{R}(U_h))$ will be minimized, but such that all faces are given the same weight. By choosing $\omega\neq 1$, we can control which faces should be weighted most in the minimization process. Our choice of weights is the inverse of the effective normal component of the permeability, i.e.,
\begin{align}
\omega_F = k_e^{-1} = \frac{\delta_{Kn}^i + \delta_{Kn}^j}{2\delta_{Kn}^i\delta_{Kn}^j},
\label{eq:weights}
\end{align}
where $\delta_{Kn}^i$ was defined in Eq.\ \eqref{eq:harmonic_weights}.

With this choice, $\Dhinv v = k_e\jump{y}$, so that faces with low effective permeability will have a relatively small correction. We will reason this choice by an example. Consider two neighboring elements sharing the face $F$ and with isotropic permeability $k_1$ and $k_2$. If we fix $k_1=1$, the effective permeability will be $k_e = 2k_2/(1+k_2)$. In the limit $k_2\rightarrow 0$, this face should approach a no-flow interface (a Neumann type of boundary with $u_B=\bfu\cdot\bfn =0$). With the harmonic average $\average{\cdot}_{\vartheta}$, $U_h$ as defined from the CG solution, Eq.~\eqref{eq:cg_flux}, would approach zero as desired. However, in the postprocessing step, the correction on $F$ can be made relatively large (compared to $U_h$) if $\omega=1$, and thus the effect of harmonic averaging might be reduced after postprocessing. Using \eqref{eq:weights}, we are able to preserve $V_h\sim 0$. The drawback is that the correction we are doing to the original flux will be larger measured in the standard $L^2$ norm. In Section \ref{sec:examples}, we will demonstrate the effect of weighting with some numerical examples.

\subsection{Time Dependent Flow}
\label{sec:timedep_pp}

Let us now look at the case with time dependent pressure and flux, i.e., $\beta\neq 0$. We need to take the compressibility (or time dependency of the pressure) into account when calculating the residual. If we discretize the flow equation \eqref{eq:flow_eq} in time, we get
\begin{align}
\bar{\partial}_t(\beta^n p^n) - \nabla\cdot(\mathbf{K}\nabla p^n) = q^n,
\end{align}
where $p^n$ and $q^n$ are the pressure and source, respectively, at time $t=t_n$. Now, treating $\bar{\partial}_t(\beta^n p^n)$ as a source term, we can extend the postprocessing method by replacing $q$ by $\tilde{q}=q^n-\beta \bar{\partial}_t(\beta^n p^n)$ in the above formulation. The residual operator now reads
\begin{align}
\mathcal{R}(U_h^n) = P_0(q^n-\bar{\partial}_t(\beta^n p^n) - \Dh U_h^n.
\end{align}
We may now use the algorithm given by Eq.~\eqref{eq:pp_def} with this extended residual operator.

For a time dependent problem, we need to perform postprocessing after each time step. However, we observe that the matrix $\mathbf{A}$ in Eq.~\eqref{eq:pp_matrix_system} is only dependent on the weights $\omega$ and the grid. 
Thus, we only need to assemble $\mathbf{A}$ whenever we alter the grid. 

\subsection{Postprocessing Parameters}
\label{sec:pp_parameters}

Given a CG pressure solution $p_h$, we have introduced different ways to calculate the CG flux approximation $U_h$. The first parameter is how we calculate the flux along the Dirichlet boundary, and the second parameter is the choice of weights $\theta$ in the average operator. To clearly express which method we are using, we introduce the following notation:
\begin{align}
\text{CG}(\alpha, \theta), \qquad \alpha = \{\text{SD,WD,RD}\}, \quad \theta=\{1/2,\vartheta\}.
\end{align}
The CG flux $U_h$ is then calculated as follows. On the internal and Neumann faces we have
\begin{align}
	U_h & =
	\begin{cases}
		-\average{\mathbf{K} \nabla p_h \cdot \mathbf{n}_{F}}_{\theta}, & \text{on } F \in \Gamma_{h,I}, \\
		u_B, & \text{on } F\in \Gamma_{h,N}.
	\end{cases}
%\label{eq:cg_flux}
\end{align}
The flux calculation on the Dirichlet boundary is given by $\alpha$ in the following way:
\begin{itemize}
\item $\alpha=\text{SD}$: CG with strong Dirichlet boundary conditions (Eq.~\eqref{eq:cg_scheme_strong}),
\begin{align}
U_h = -\mathbf{K} \nabla p_h \cdot \mathbf{n}_{F}, \quad \text{on } F \in \Gamma_{h,D}.
\end{align}
\item $\alpha=\text{WD}$: CG with weak Dirichlet boundary conditions (Eq.~\eqref{eq:cg_scheme}),
\begin{align}
U_h = -\mathbf{K} \nabla p_h \cdot \mathbf{n}_{F} + \frac{r^2\sigma_{F}}{\vert F\vert} (p_h-p_B), \quad \text{on } F \in \Gamma_{h,D}.
\end{align}
\item $\alpha=\text{RD}$: CG with strong Dirichlet boundary conditions and with recovered flux along the Dirichlet boundary (Eq.~\eqref{eq:flux_recovery_2}),
\begin{align}
U_h = \mathcal{U}_h, \quad \text{on } F \in \Gamma_{h,D}.
\end{align}
\end{itemize}

Furthermore, for the postprocessed flux, we have one more parameter describing which norm we are using for minimization. We use the following notation,
\begin{align}
\text{PP}(\alpha, \theta, \lambda), \qquad \alpha = \{\text{SD,WD,RD}\}, \quad \theta=\{1/2,\vartheta\}, 
\quad \lambda = \{\text{L2},\text{wL2}\},
\end{align}
where $\lambda=\text{L2}$ and $\lambda=\text{wL2}$ denotes minimization in the standard $L^2$ norm and the weighted $L^2$ norm, respectively. In the weighted $L^2$ norm we use weight $\omega = k_e^{-1}$ as described in Section \ref{sec:pp_weights}. We note that the methods considered in \cite{sun2006projections} and \cite{larson2004conservative} corresponds to CG(SD,1/2,L2).

In the case of homogeneous permeability, the parameters $\theta$ and $\lambda$ are obsolete, and we simply write CG($\alpha$) and PP($\alpha$). In the case $\text{PP}(\text{RD},\cdot,\cdot)$, we consider the flux on the Dirichlet boundary as fixed and thus consider the postprocessing step as a pure Neumann problem.

\section{Numerical Examples}
\label{sec:examples}

The postprocessing algorithm, along with solvers for the flow and transport equations, have been implemented. All implementations are based on the open source finite element library deal.II \cite{bangerth2007dealii}.
The numerical examples and timings were performed on a single core of an Intel Xeon X7542 (2.67\,GHz, 18\,MB cache) with 64-bit Ubuntu 14.04 and 256\,GB memory.
For the flow equation we use CG with bilinear elements ($r=1$), while for the transport equation we use DG with piecewise constants ($r=0$). 
In this section we run a series of test cases to verify our implementations and evaluate the postprocessing algorithm. Our main objectives are to
\begin{enumerate}[itemsep=0pt,label*={(\roman*)}]
\item Verify that the postprocessed flux is locally conservative on a range of grid types;
\item Test if we are able to recover exact flux for a problem with analytic solution of one polynomial degree higher than the test space (expressed as an amenable \emph{consistency condition} in \cite[Section 4.1]{Ainsworth1991ape});
\item Study the effect of how flux on the Dirichlet boundary is calculated, as discussed in Section \ref{sec:pp_parameters};
\item Verify the error estimates given by Lemma \ref{lemma:ee};
\item Study the choice of weights in the average operator and the choice of norm used for minimization in the postprocessing method;
\item Measure the computational complexity of the postprocessing problem compared to the flow problem.
\item Demonstrate the importance of locally conservative flux when solving the transport equation.
\end{enumerate}

For the latter objective, we introduce an overshoot quantity for the concentration solution $c_h$,
\begin{align}
\mathcal{O}(c_h) = \Vert \text{max}(c_h-\bar{c},0) + \text{max}(-c_h,0)\Vert_{\mcEh},
\label{eq:overshoot}
\end{align}
where $\bar{c}$ is the upper bound on the concentration, $\bar{c} = \text{max}(c_B, c_w, c_0)$. For the incompressible flow problem ($\beta=0$), the concentration is expected to obey the maximum principle $c\le \bar{c}$ and be positive. Hence, $\mathcal{O}(c_h)$ is used as a measure of the violation of these principles.

To solve the coupled flow and transport problem, Eq.\ \eqref{eq:flow_eq}--\eqref{eq:transport_eq}, we use an iterative solution technique. In each time step we first solve for pressure, then postprocess the flux if necessary, and at last solve the transport problem with the obtained flux approximation. This coupled process is illustrated by the flow chart in Fig.\ \ref{fig:flowchart}. If $\beta=0$, we only need to solve for pressure and postprocess the flux once, and then do time iterations on the transport solver only. We also run cases without the postprocessing step, i.e., use $U_h$ directly in the transport solver.

\begin{figure}[bpt]
\centering
\subfloat[$\beta = 0$.]{
\includegraphics[height=0.4\textwidth]{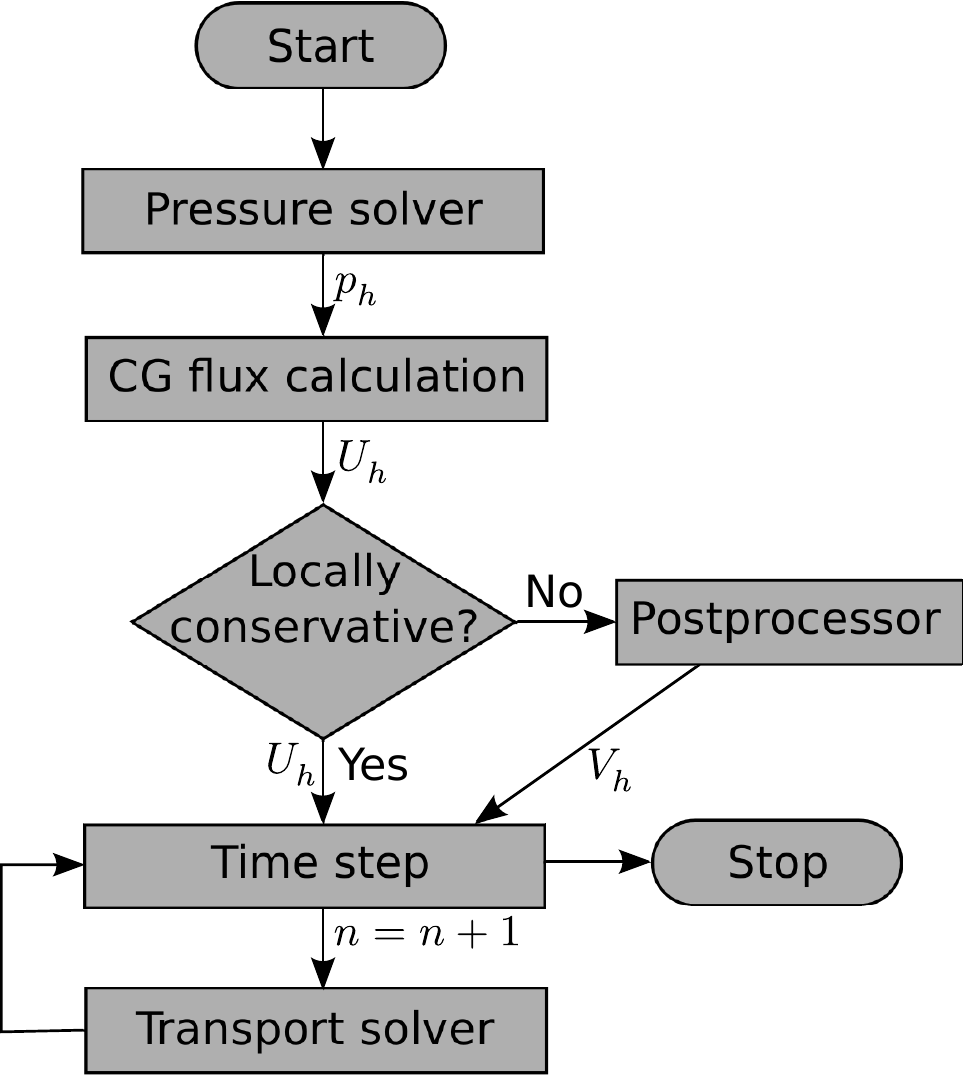}
}
\hspace*{10mm}
\subfloat[$\beta\neq 0$.] {
\includegraphics[height=0.4\textwidth]{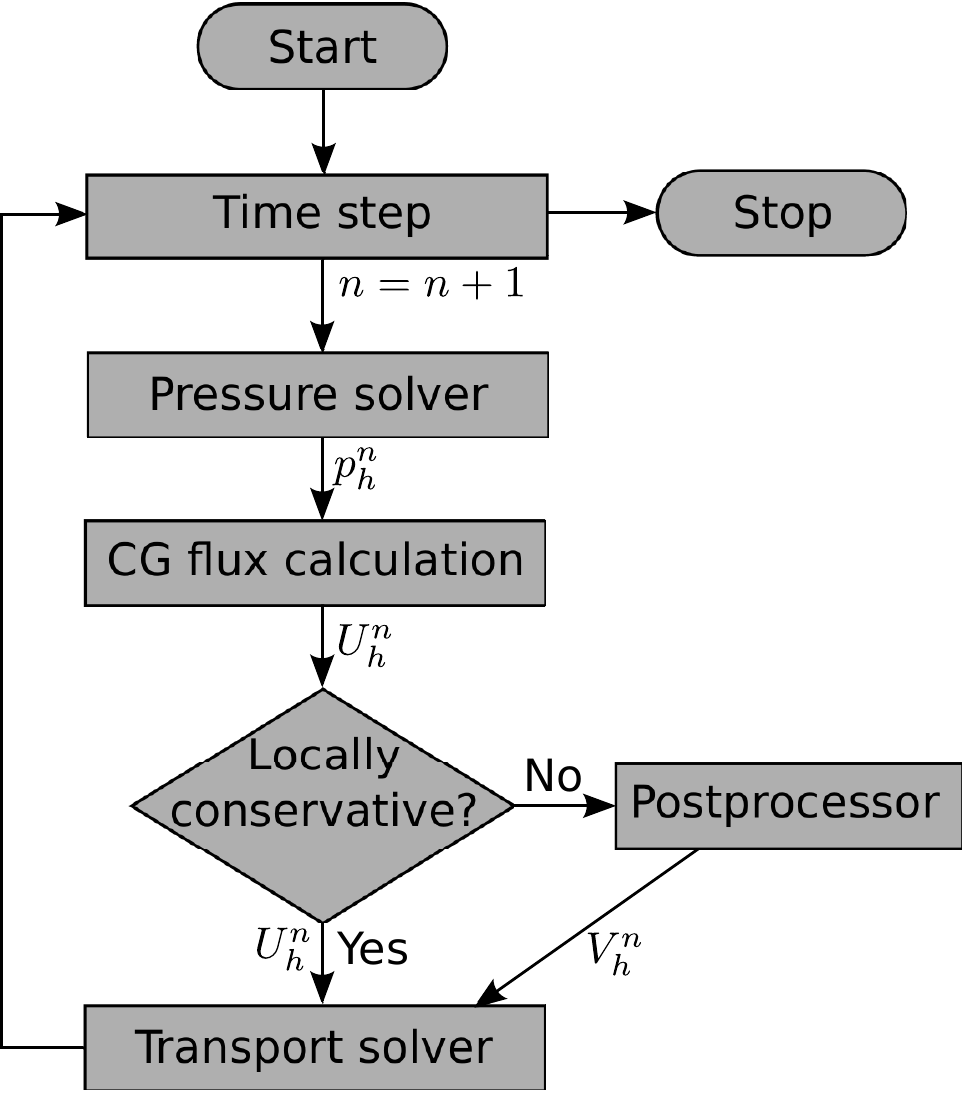}
}
\caption{Flowcharts describing the solution strategy for the elliptic case (a) and the parabolic case (b).}
\label{fig:flowchart}
\end{figure}

\subsection{Consistency Tests}

Our first example is a pure flow problem to examine the objectives (i)-(iii).
Consider the problem 
\begin{subequations}
\begin{align}
-\nabla\cdot(\nabla p) &= 2, \quad\text{on } \Omega=(0,1)^2, \\
p &= 1, \quad \text{for } x=0, \\
p &= 0, \quad \text{for } x=1, \\
\bfu\cdot\bfn &= 0, \quad \text{for } y=\{0,1\}.
\end{align}
\end{subequations}
This problem has the analytical solution $p(x,y) = 1-x^2$, and is essentially a one-dimensional problem. Since the permeability tensor is constant ($\mathbf{K}=\mathbb{I}$), there is no effect of harmonic averaging or weighting of the $L^2$ norm.

Results for different grids and calculations of fluxes along the Dirichlet boundary are presented in Table \ref{tab:comp_tests}. First observe that the residual for the postprocessed flux, $\mathcal{R}(V_h)$, is zero in all cases. This demonstrates that $V_h$ is locally conservative and that our postprocessing method works. For the uniform 1D grid all methods give exact solution for $V_h$. The flux error, $\Vert \mathbf{u}\cdot\mathbf{n} - U_h\Vert_{\mcFh}$, for CG(WD) can be made arbitrarily small by increasing the penalty term $\sigma_F$. This illustrate some of the ambiguity with weak boundary conditions. In the limit $\sigma_F\rightarrow\infty$, CG(WD) and CG(RD) are equivalent.
The postprocessed flux error, $\Vert \mathbf{u}\cdot\mathbf{n} - V_h\Vert_{\mcFh}$, for CG(SD) and CG(WD) is non-zero for the nonuniform 1D grid because the flux $U_h$ on the Dirichlet boundary is wrong. On this grid, CG(RD) reproduce the exact flux. For the two latter grids, the distorted and matching 2D grids, CG(SD) seems to give the best result. 

We observe that for the distorted and non-matching 2D grids, we do not obtain exact fluxes for CG(RD). In Table \ref{tab:gamma_flux} we report on the integrated flux $\int_{\gamma}U_h$ along vertical mesh lines $\gamma$, which divides the domain $\Omega$ in two. For the nonmatching 2D grid (Table~\ref{tab:gamma_flux_nm}), we see that we recover the exact value with all methods. For the distorted 2D grid (Table~\ref{tab:gamma_flux_d}), this is only the case for CG(RD). This follows from the fact that the fluxes are globally conservative and that the integrated flux is exactly recovered along the Dirichlet boundaries \cite{kvamsdal1998variationally}. Notice that for CG(SD) and CG(WD), the value of the integrated flux $V_h$ is shifted by the same value for all $\gamma$ ($0.0033$ for CG(SD) and $0.0020$ for CG(WD)).

\begin{table}[bpt]
\caption{\textbf{Consistency tests}. Norm of residual and flux error before ($U_h$) and after ($V_h$) postprocessing for different grids and flux calculations along the Dirichlet boundary. The penalty term for CG(WD) is $\sigma_{\gamma} = 10$.}
\label{tab:comp_tests}
\vspace*{2mm}
\centering
\footnotesize

\subfloat[Uniform 1D grid.]{
\begin{minipage}{0.09\textwidth}
\includegraphics[width=\linewidth]{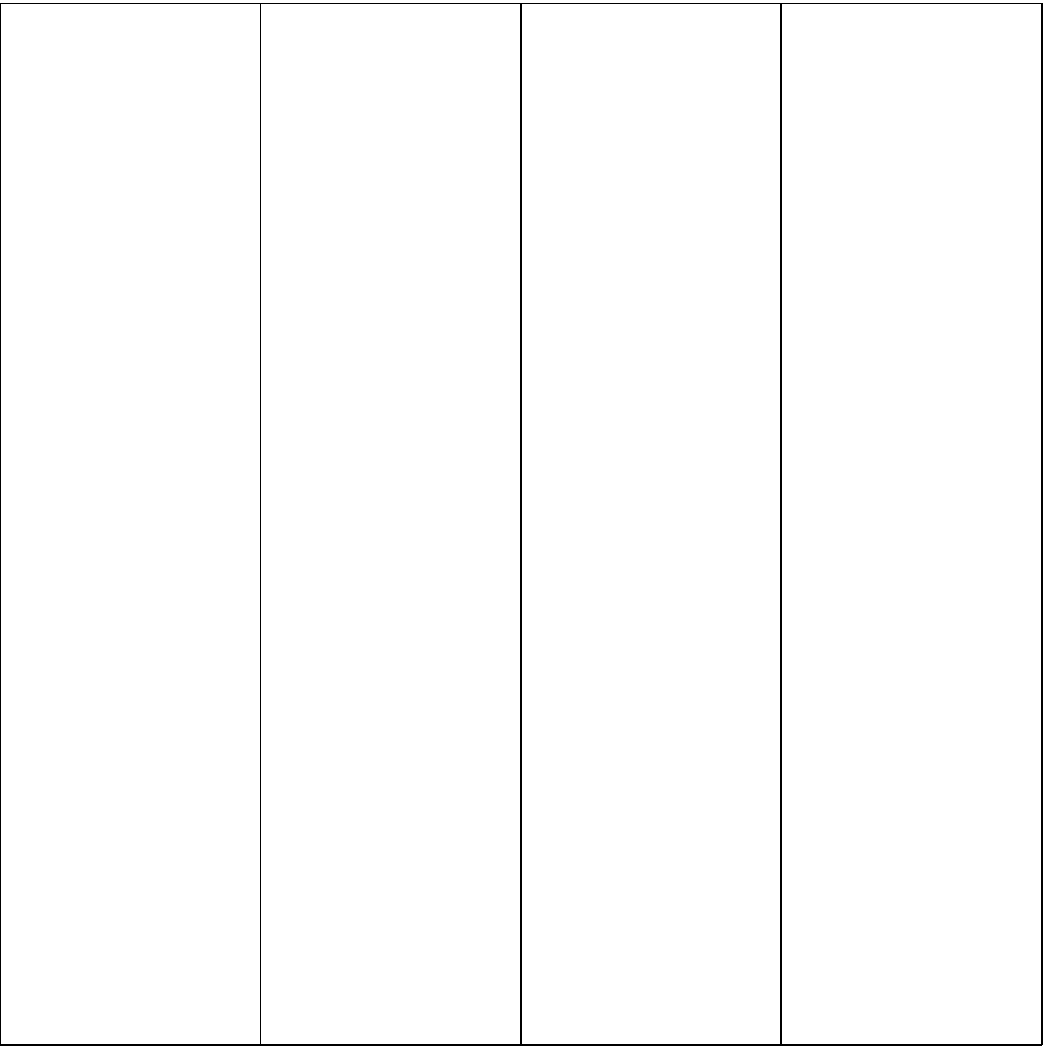}
\end{minipage}
\hspace*{2mm}
\begin{tabular}{lcccc}
\hline
Method & 
$\Vert\mathcal{R}(U_h)\Vert_{\mcEh}$ & 
$\Vert\mathcal{R}(V_h)\Vert_{\mcEh}$ & 
$\Vert \mathbf{u}\cdot\mathbf{n} - U_h\Vert_{\mcFh}$ & 
$\Vert \mathbf{u}\cdot\mathbf{n} - V_h\Vert_{\mcFh}$ \\
\hline
CG(SD)  & 0.707 & 2.4e-16 & 0.354 & 9.7e-16 \\
CG(WD)  & 0.333 & 3.9e-17 & 0.118 & 1.2e-15 \\
CG(RD)  & 1.2e-15 & 1.2e-15 & 1.7e-15 & 1.7e-15 \\
\hline
\end{tabular} 
}

\subfloat[Nonuniform 1D grid.]{
\begin{minipage}{0.09\textwidth}
\includegraphics[width=\linewidth]{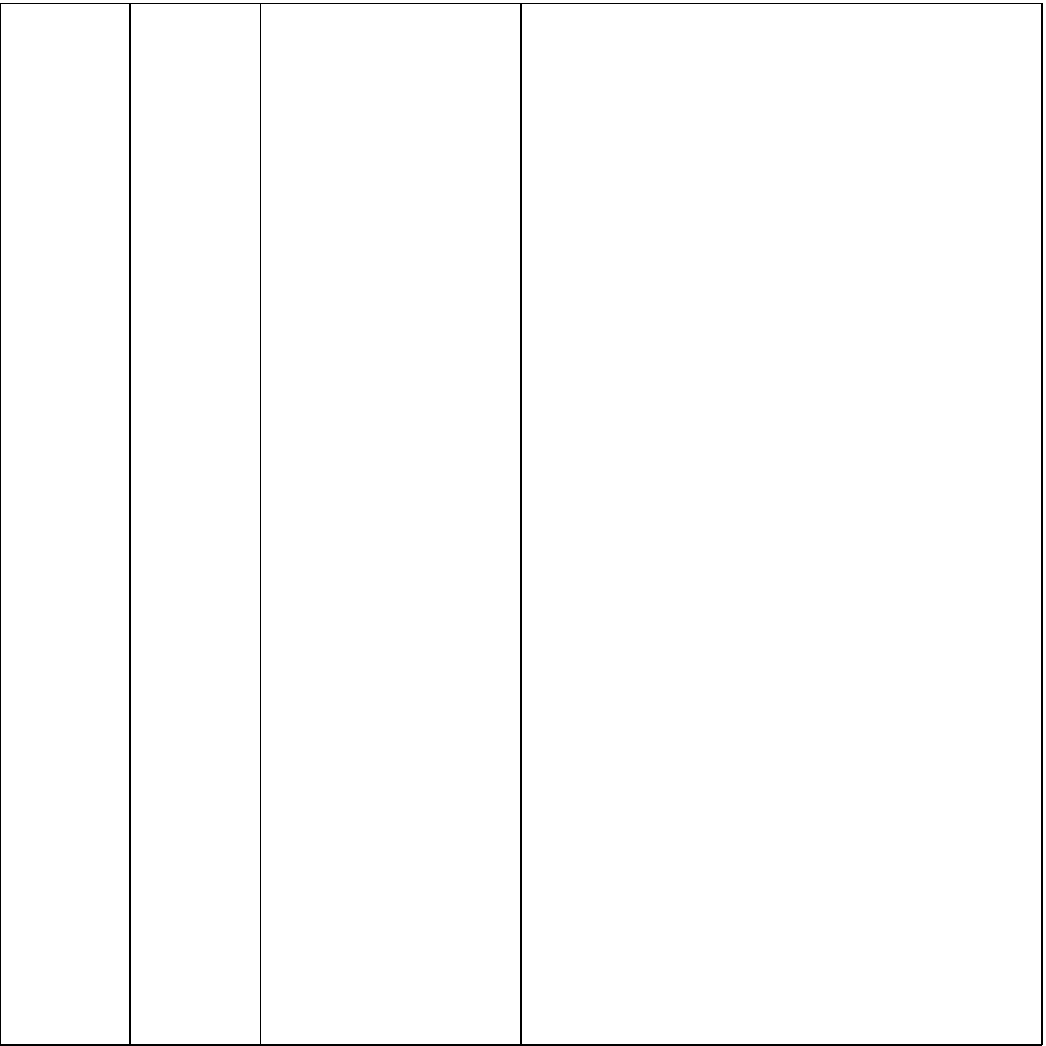}
\end{minipage}
\hspace*{2mm}
\begin{tabular}{lcccc}
\hline
Method & 
$\Vert\mathcal{R}(U_h)\Vert_{\mcEh}$ & 
$\Vert\mathcal{R}(V_h)\Vert_{\mcEh}$ & 
$\Vert \mathbf{u}\cdot\mathbf{n} - U_h\Vert_{\mcFh}$ & 
$\Vert \mathbf{u}\cdot\mathbf{n} - V_h\Vert_{\mcFh}$ \\
\hline
CG(SD)   & 0.976 & 2.9e-16 & 0.534 & 0.084 \\
CG(WD)   & 0.888 & 9.6e-17 & 0.265 & 0.168 \\
CG(RD)   & 0.280 & 6.3e-16 & 0.140 & 1.7e-15 \\
\hline
\end{tabular} 
}

\subfloat[Uniform 2D grid.]{
\begin{minipage}{0.09\textwidth}
\includegraphics[width=\linewidth]{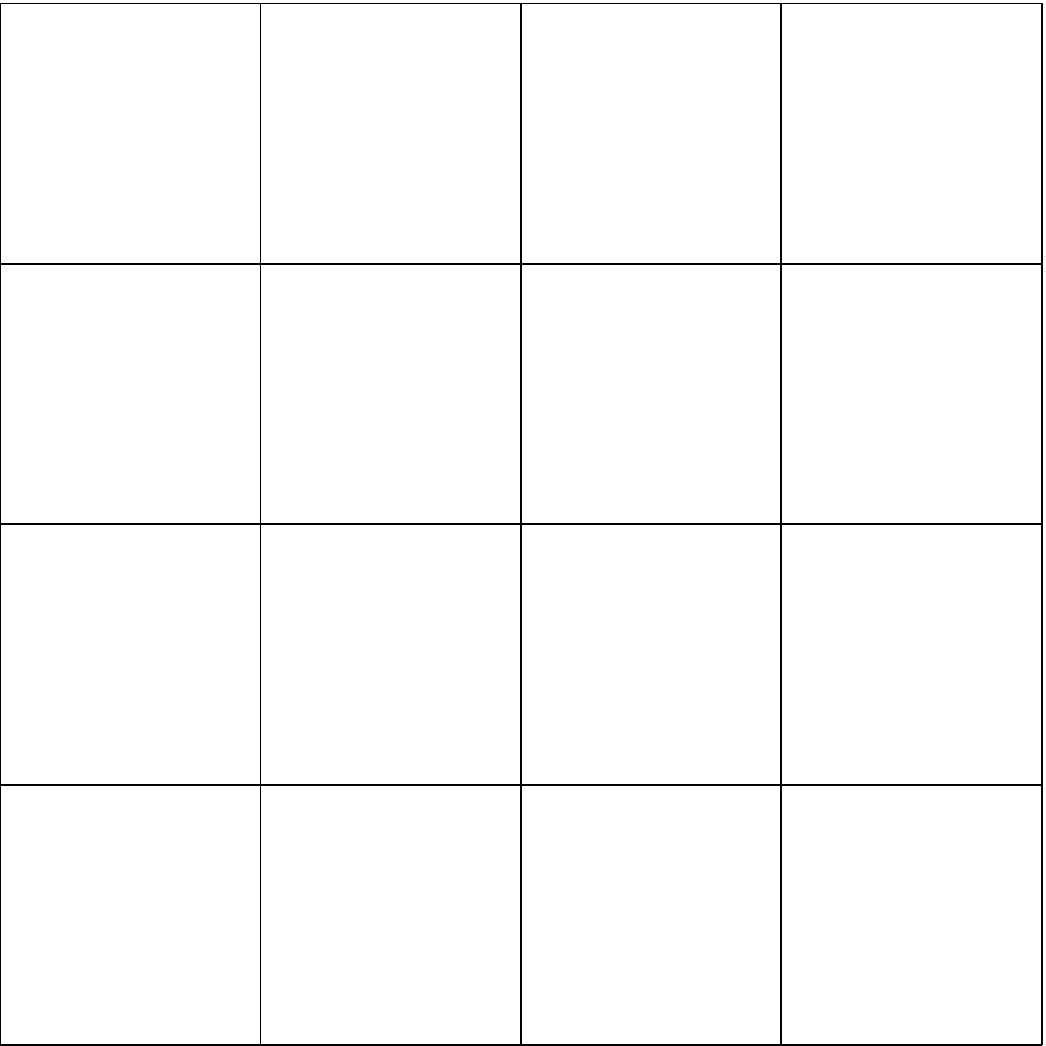}
\end{minipage}
\hspace*{2mm}
\begin{tabular}{lcccc}
\hline
Method & 
$\Vert\mathcal{R}(U_h)\Vert_{\mcEh}$ & 
$\Vert\mathcal{R}(V_h)\Vert_{\mcEh}$ & 
$\Vert \mathbf{u}\cdot\mathbf{n} - U_h\Vert_{\mcFh}$ & 
$\Vert \mathbf{u}\cdot\mathbf{n} - V_h\Vert_{\mcFh}$ \\
\hline
CG(SD)   & 0.707   & 4.6e-16 & 0.354 & 7.0e-16 \\
CG(WD)   & 0.056   & 3.0e-17 & 0.020 & 1.1e-14 \\
CG(RD)   & 2.8e-15 & 2.8e-15 & 1.4e-15 & 1.4e-15 \\
\hline
\end{tabular} 
}

\subfloat[Distorted 2D grid.]{
\begin{minipage}{0.09\textwidth}
\includegraphics[width=\linewidth]{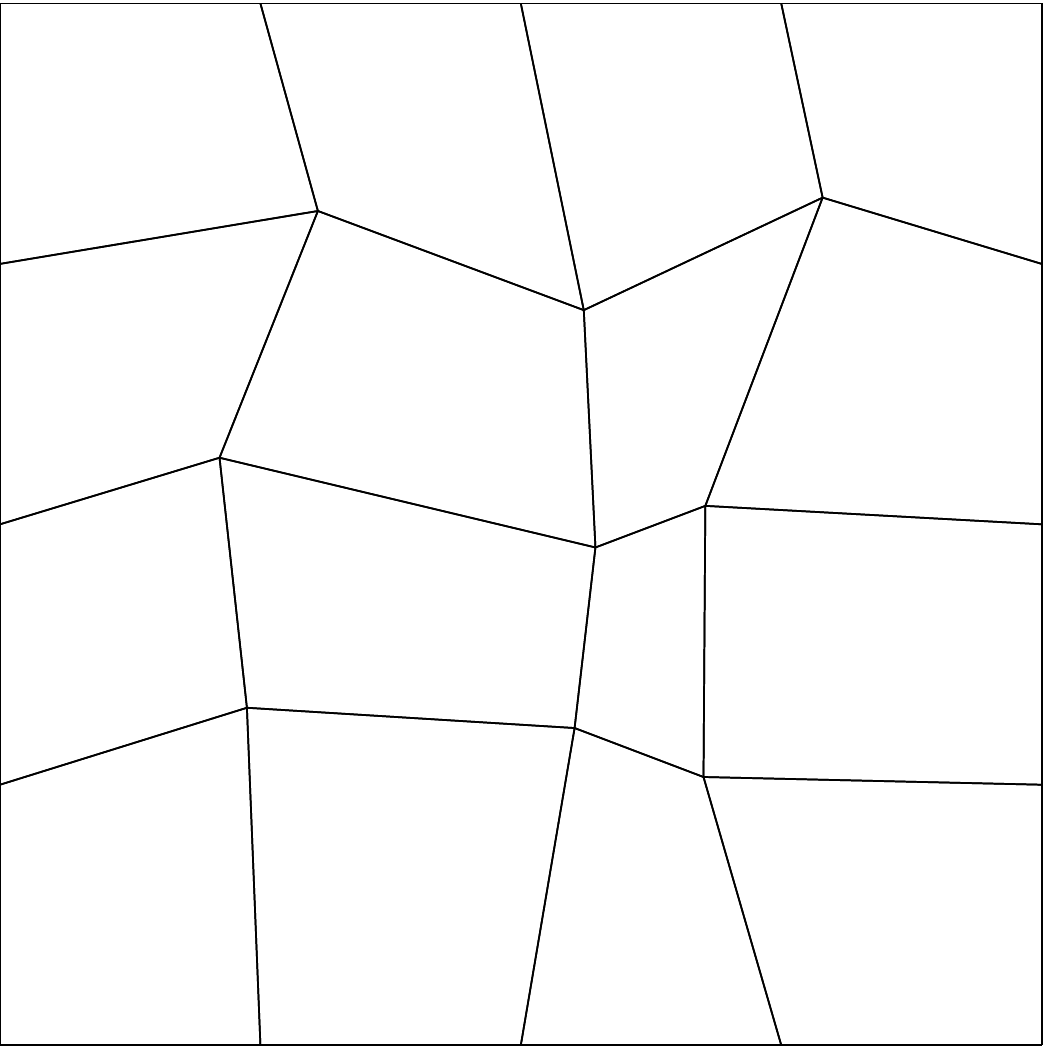}
\end{minipage}
\hspace*{2mm}
\begin{tabular}{lcccc}
\hline
Method & 
$\Vert\mathcal{R}(U_h)\Vert_{\mcEh}$ & 
$\Vert\mathcal{R}(V_h)\Vert_{\mcEh}$ & 
$\Vert \mathbf{u}\cdot\mathbf{n} - U_h\Vert_{\mcFh}$ & 
$\Vert \mathbf{u}\cdot\mathbf{n} - V_h\Vert_{\mcFh}$ \\
\hline
CG(SD)    & 0.908 & 1.4e-15 & 0.401 & 0.073 \\
CG(WD)    & 0.443 & 7.2e-17 & 0.122 & 0.078 \\
CG(RD)    & 0.462 & 6.0e-15 & 0.131 & 0.085 \\
\hline
\end{tabular} 
}

\subfloat[Nonmatching 2D grid.]{
\begin{minipage}{0.09\textwidth}
\includegraphics[width=\linewidth]{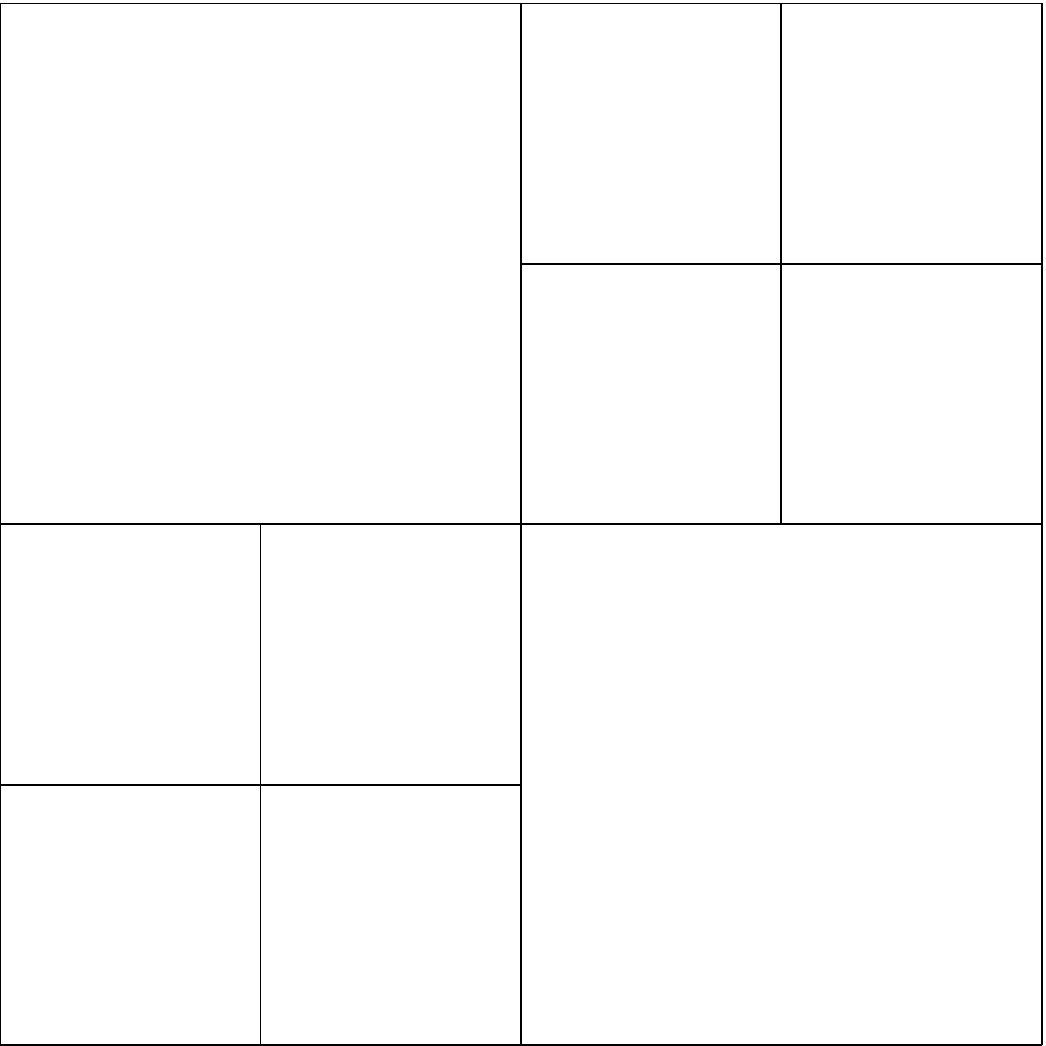}
\end{minipage}
\hspace*{2mm}
\begin{tabular}{lcccc}
\hline
Method & 
$\Vert\mathcal{R}(U_h)\Vert_{\mcEh}$ & 
$\Vert\mathcal{R}(V_h)\Vert_{\mcEh}$ & 
$\Vert \mathbf{u}\cdot\mathbf{n} - U_h\Vert_{\mcFh}$ & 
$\Vert \mathbf{u}\cdot\mathbf{n} - V_h\Vert_{\mcFh}$ \\
\hline
CG(SD)    & 1.127 & 5.5e-16 & 0.615 & 0.089 \\
CG(WD)    & 0.288 & 6.4e-17 & 0.183 & 0.163 \\
CG(RD)  & 0.278 & 1.3e-15 & 0.179 & 0.144 \\
\hline
\end{tabular} 
}

\end{table}

\begin{table}[bpt]
\caption{\textbf{Consistency tests}. Integrated flux along vertical mesh lines for different flux approximations and mesh lines, $\gamma_i$.}
\label{tab:gamma_flux}
\begin{center}
\footnotesize

\subfloat[Distorted 2D grid.]{
\label{tab:gamma_flux_d}
\begin{tabular}{|ll|ccccc|}
\hline
 & & $\gamma_1$ & $\gamma_2$ & $\gamma_3$ & $\gamma_4$ & $\gamma_5$ \\ 
\multicolumn{2}{|c|}{Flux} & $(x=0)$ & $(x\approx 0.25)$ & $(x\approx 0.5)$ & $(x\approx 0.75)$ & $(x=1)$ \\ \hline
Exact & $\int_{\gamma_i} \bfu\cdot\bfn$ & 0 & 0.4980 & 1.0850 & 1.4400 & 2 \\ \hline
\multirow{2}{*}{CG(SD)} & $\int_{\gamma_i} U_h$ & 0.2551  & 0.5198 & 1.0331 & 1.4919 & 1.7196 \\
                        & $\int_{\gamma_i} V_h$ & 0.0033  & 0.4947 & 1.0817 & 1.4367 & 1.9967 \\ \hline
\multirow{2}{*}{CG(WD)} & $\int_{\gamma_i} U_h$ & 4.2e-15 & 0.5334 & 1.0330 & 1.4785 & 2 \\
                        & $\int_{\gamma_i} V_h$ & -0.0020  & 0.5001 & 1.0870 & 1.4420 & 2.0020 \\ \hline 
\multirow{2}{*}{CG(RD)} & $\int_{\gamma_i} U_h$ & 1.6e-15 & 0.5198 & 1.0331 & 1.4919 & 2 \\
                        & $\int_{\gamma_i} V_h$ & 1.6e-15 & 0.4980 & 1.0850 & 1.4400 & 2 \\ \hline         
\end{tabular} }

\subfloat[Nonmatching 2D grid.]{
\label{tab:gamma_flux_nm}
\begin{tabular}{|ll|ccc|}
\hline
 & & $\gamma_1$ & $\gamma_2$ & $\gamma_3$ \\ 
\multicolumn{2}{|c|}{Flux} & $(x=0)$ & $(x=0.5)$ & $(x=1)$ \\ \hline
Exact & $\int_{\gamma_i} \bfu\cdot\bfn$ & 0 & 1 & 2 \\ \hline
\multirow{2}{*}{CG(SD)} & $\int_{\gamma_i} U_h$ & 0.4130  & 1 & 1.5870 \\
                       & $\int_{\gamma_i} V_h$ & 9.7e-17 & 1 & 2 \\ \hline
\multirow{2}{*}{CG(WD)} & $\int_{\gamma_i} U_h$ & 1.9e-15 & 1 & 2 \\
                       & $\int_{\gamma_i} V_h$ & 4.5e-15 & 1 & 2 \\ \hline 
\multirow{2}{*}{CG(RD)} & $\int_{\gamma_i} U_h$ & 8.7e-16 & 1 & 2 \\
                       & $\int_{\gamma_i} V_h$ & 8.7e-16 & 1 & 2 \\ \hline         
\end{tabular} }

\end{center}
\end{table}

\subsection{Convergence Tests}

To verify the convergence estimates in Eq.\ \eqref{eq:conv_estimate_pressure} and Lemma \ref{lemma:ee} numerically (objective (iv)), we consider a time dependent problem with analytic solution. Let $\Omega=(0,1)^2$, $\mathbf{K}= \mathbb{I}$, $\beta=1.0$ and $\phi= 1.0$. For the coupled flow and transport problem \eqref{eq:flow_eq}--\eqref{eq:transport_eq} we choose right hand sides and boundary conditions such that
\begin{align}
p=\cos(t + x-y),\qquad c=\cos(t + x-y)
\end{align}
are the analytic solutions. One may easily verify that $q=2\cos\alpha-\sin\alpha$ and $f=\left(1+4\sin\alpha\right)\cos\alpha$ with $\alpha=t + x-y$. For the flow problem, we impose Dirichlet conditions on $x=\{0,1\}$ and Neumann conditions on $y=\{0,1\}$. The numerical solution at $t=0.1$ on a fine grid can viewed in Fig.\ \ref{fig:conv_test_solution}.

First, the domain $\Omega$ is discretized into uniform quadratic grids of size $n\times n$ with $n=2^{i},\,i=2,3,4,5$. Equivalently, $h=\frac{1}{n}=2^{-i}$. The end time is $T=0.1$, and the time step size is chosen small enough to not effect the convergence rates and is recursively refined such that $\Delta t = \frac{1}{5\cdot 4^{i-1}} = \frac{4}{5}h^2$. The transport solver is run with three different flux approximations: (i) CG flux ($U_h$); (ii) postprocessed CG flux ($V_h$); and (iii) analytic flux ($\bfu\cdot\bfn$). Dirichlet conditions are imposed strongly, CG(SD). Convergence tables for flow and transport quantities are shown in Table \ref{tab:conv}.

\begin{figure}[bpt]
\centering
\subfloat[Pressure.]{
\includegraphics[height=0.4\textwidth]{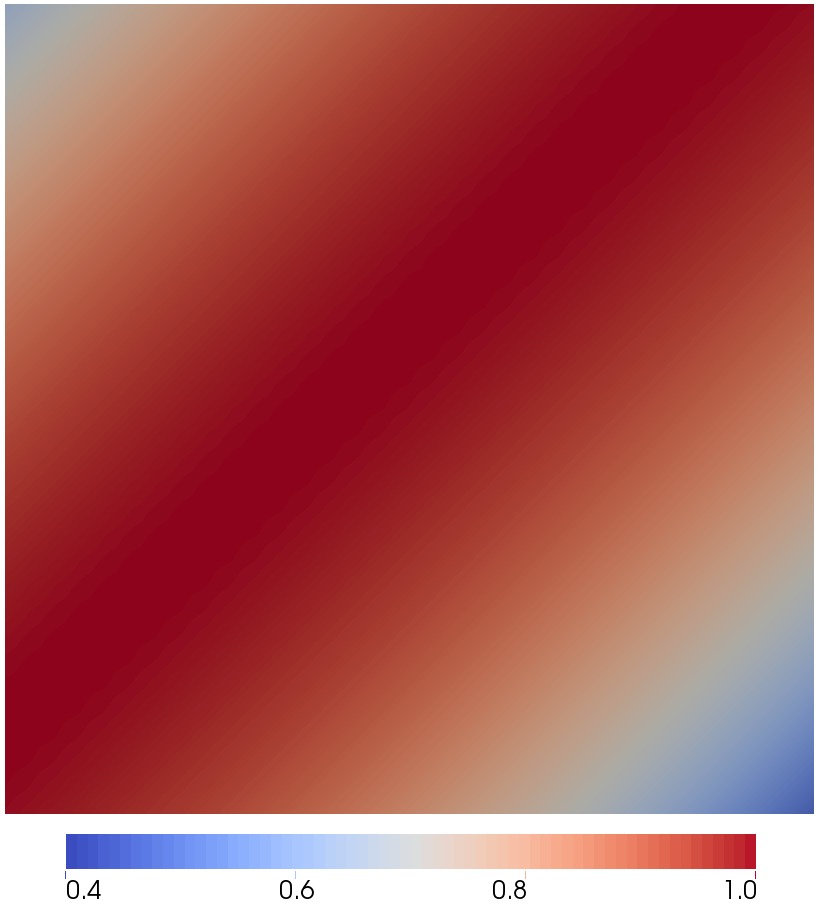}
}
\hspace*{3mm}
\subfloat[Concentration.]{
\includegraphics[height=0.4\textwidth]{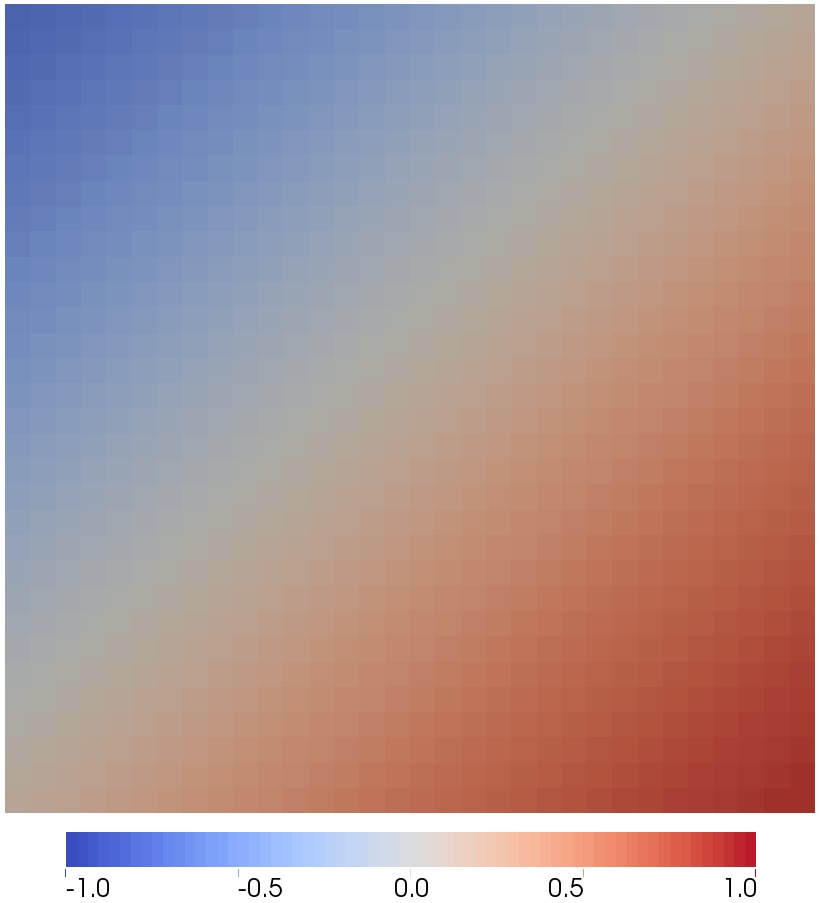}
}
\caption{\textbf{Convergence tests}. Pressure (a) and concentration (b) solution at $t = 0.1$ on the finest grid level, $1/h=32$. The postprocessed flux, $V_h$, is used in the transport solver to calculate the transport solution.}
\label{fig:conv_test_solution}
\end{figure}

We observe that the error in $p$ is of order 1 in the energy norm in accordance with the error estimate in Eq.\ \eqref{eq:conv_estimate_pressure}. Furthermore, we see that the postprocessed flux, $V_h$, converges with order $1/2$ larger than the CG flux, $U_h$. The residual, $\mathcal{R}(U_h)$, converges to zero with one order lower than $U_h$. These results are in accordance with Lemma \ref{lemma:ee}. Finally, we observe that the residual is zero (down to machine precision) for the postprocessed flux.

For the concentration solution, all simulations converge with order 1. The differences in concentration due to different flux calculations are small in this example. However, we show later that cases involving heterogeneous permeability may result in much larger differences.

Next, the same examples were run but with Dirichlet flux recovery, CG(RD). The convergence table for flow and transport variables are displayed in Table \ref{tab:conv_dfr}. We see that the order of the error in $U_h$ increases by $1/2$ compared to CG(SD), while the residual now converges to zero with rate $1.5$. This appears to be due to better flux approximation on the Dirichlet boundary as this is the only difference. The postprocessed flux has the same order as the CG flux, so the net effect is nearly the same as without Dirichlet flux recovery (cf.\ Table \ref{tab:conv_flow}). In the remaining examples of this work, we will therefore only consider strong Dirichlet conditions, CG(SD).

\begin{table}[bpt]
\setlength{\tabcolsep}{5pt}
\caption{\textbf{Convergence tests}. Error and convergence rates for flow variables (a) and concentration solution (b). A recursively refined quadratic grid with element size $h$ is used. Dirichlet boundary conditions are imposed strongly, CG(SD).}
\label{tab:conv}
\begin{center}
\footnotesize

\subfloat[Flow variables.]{
\label{tab:conv_flow}
\begin{tabular}{|c|cc|cc|cc|cc|c|}
\hline
$1/h$ &
\multicolumn{2}{c|}{$\Vert p-p_h\Vert_a$} & 
\multicolumn{2}{c|}{$\Vert \mathbf{u}\cdot\mathbf{n}-U_h\Vert_{h,\mcFh}$} & 
\multicolumn{2}{c|}{$\Vert \mathbf{u}\cdot\mathbf{n}-V_h\Vert_{h,\mcFh}$} & 
\multicolumn{2}{c|}{$\Vert \mathcal{R}(U_h)\Vert_{\mcEh}$} & 
\multicolumn{1}{c|}{$\Vert \mathcal{R}(V_h)\Vert_{\mcEh}$} \\ \hline
4 & 0.0941 & - & 0.08211 & - & 0.00809 & - & 0.3000 & - & 2.8e-16\\ 
8 & 0.0470 & 1.00 & 0.02804 & 1.55 & 0.00197 & 2.04 & 0.2125 & 0.50 & 3.1e-16 \\ 
16 & 0.0235 & 1.00 & 0.00967 & 1.54 & 0.00049 & 2.00 & 0.1503 & 0.50 & 4.0e-16\\ 
32 & 0.0117 & 1.00 & 0.00337 & 1.52 & 0.00012 & 2.00 & 0.1063 & 0.50 & 5.2e-16\\ 
\hline
\end{tabular} }

\subfloat[Concentration solution with different flux (in parenthesis).]{
\label{tab:conv_transport}
\begin{tabular}{|c|cc|cc|cc|} 
\hline
$1/h$ & 
\multicolumn{2}{c|}{$\Vert c-c_h\Vert_{\mcEh}\ (\mathbf{u}\cdot\mathbf{n})$} & 
\multicolumn{2}{c|}{$\Vert c-c_h\Vert_{\mcEh}\ (U_h)$} & 
\multicolumn{2}{c|}{$\Vert c-c_h\Vert_{\mcEh}\ (V_h)$} \\ \hline
4 &  0.09502 & - & 0.09631 & - & 0.09507 & - \\ 
8 &  0.04765 & 1.00 & 0.04850 & 0.99 & 0.04766 & 1.00 \\ 
16 & 0.02385 & 1.00 & 0.02436 & 0.99 & 0.02385 & 1.00 \\ 
32 & 0.01193 & 1.00 & 0.01218 & 1.00 & 0.01193 & 1.00 \\ 
\hline
\end{tabular} }

\end{center}
\end{table}

\begin{table}[bpt]
\setlength{\tabcolsep}{5pt}
\caption{\textbf{Convergence tests}. Error and convergences rates for flow variables (a) and concentration solution (b). A recursively refined quadratic grid with element size $h$ is used. The Dirichlet flux recovery technique, CG(RD), is used.}
\label{tab:conv_dfr}
\begin{center}
\footnotesize

\subfloat[Flow variables.]{
\label{tab:conv_flow_dfr}
\begin{tabular}{|c|cc|cc|cc|cc|c|}
\hline
$1/h$ &
\multicolumn{2}{c|}{$\Vert p-p_h\Vert_a$} & 
\multicolumn{2}{c|}{$\Vert \mathbf{u}\cdot\mathbf{n}-U_h\Vert_{h,\mcFh}$} & 
\multicolumn{2}{c|}{$\Vert \mathbf{u}\cdot\mathbf{n}-V_h\Vert_{h,\mcFh}$} & 
\multicolumn{2}{c|}{$\Vert \mathcal{R}(U_h)\Vert_{\mcEh}$} & 
\multicolumn{1}{c|}{$\Vert \mathcal{R}(V_h)\Vert_{\mcEh}$} \\ \hline
4  & 0.0941 & -    & 0.00965 & -    & 0.00719 & -    & 0.0207 & -    & 1.6e-13\\ 
8  & 0.0470 & 1.00 & 0.00250 & 1.95 & 0.00160 & 2.17 & 0.0077 & 1.42 & 5.1e-13\\ 
16 & 0.0235 & 1.00 & 0.00064 & 1.97 & 0.00037 & 2.11 & 0.0028 & 1.49 & 7.7e-13\\ 
32 & 0.0117 & 1.00 & 0.00016 & 1.99 & 0.00009 & 2.06 & 0.0010 & 1.50 & 1.4e-12\\ 
\hline
\end{tabular} }

\subfloat[Concentration solution with different flux (in parenthesis).]{
\label{tab:conv_transport_dfr}
\begin{tabular}{|c|cc|cc|cc|} 
\hline
$1/h$ & 
\multicolumn{2}{c|}{$\Vert c-c_h\Vert_{\mcEh}\ (\mathbf{u}\cdot\mathbf{n})$} & 
\multicolumn{2}{c|}{$\Vert c-c_h\Vert_{\mcEh}\ (U_h)$} & 
\multicolumn{2}{c|}{$\Vert c-c_h\Vert_{\mcEh}\ (V_h)$} \\ \hline
4 & 0.09503 & - & 0.09514 & - & 0.09510 & - \\ 
8 & 0.04765 & 1.00 & 0.04769 & 1.00 & 0.04767 & 1.00 \\ 
16 & 0.02385 & 1.00 & 0.02386 & 1.00 & 0.02386 & 1.00 \\ 
32 & 0.01193 & 1.00 & 0.01194 & 1.00 & 0.01193 & 1.00 \\
\hline
\end{tabular} }

\end{center}
\end{table}

At last, we consider the same problem but evaluate convergence on a family of distorted and non-conforming grid. Let $\mathcal{M}_0$ be the base grid as displayed in Fig.\ \ref{fig:grid_refinement_1}. Then, we iteratively refine the base grid globally by dividing each element into four by connecting midpoints of the four faces. This results in a family of refined grids, $\mathcal{M}_i,\ i=0,1,\ldots,4$, where the three first grids are displayed in Fig.~\ref{fig:grid_refinement}. The time steps are now $\Delta t = \frac{1}{5\cdot 4^{i+1}}$. Convergence results are shown in Table \ref{tab:conv_grid}. 
We still observe that the order of $V_h$ is the same as for $U_h$, although we have to let $h$ be very small for the rate to converge towards 1. Notice that $\Vert \mathbf{u}\cdot\mathbf{n}-V_h\Vert_{h,\mcFh} < \Vert \mathbf{u}\cdot\mathbf{n}-U_h\Vert_{h,\mcFh}$ for all cases studied in this section. This example demonstrates that our method works and that the error estimates hold for general grids.

\begin{figure}[bpt]
\centering
\subfloat[$\mathcal{M}_0$.]{
\includegraphics[width=0.25\textwidth]{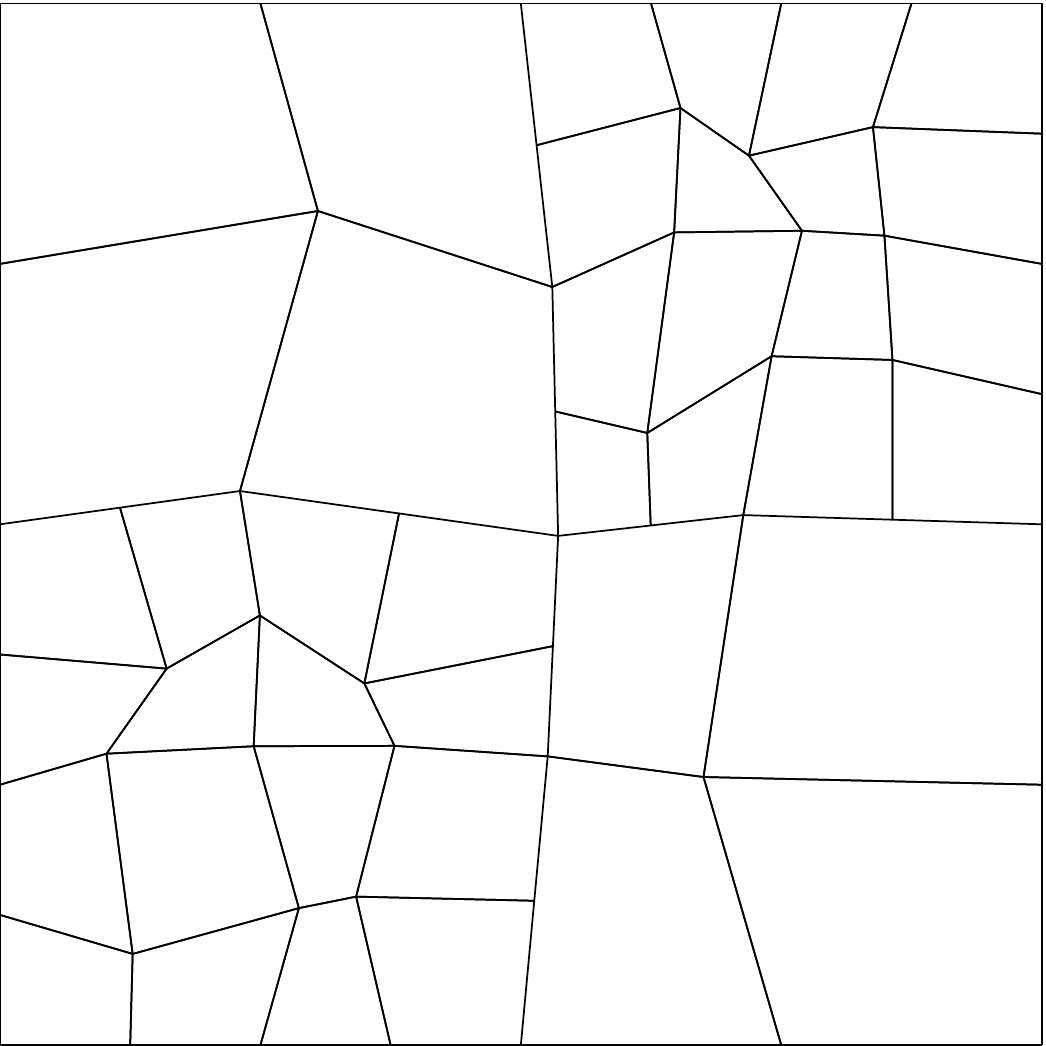}
\label{fig:grid_refinement_1}
}
\hspace*{3mm}
\subfloat[$\mathcal{M}_1$.]{
\includegraphics[width=0.25\textwidth]{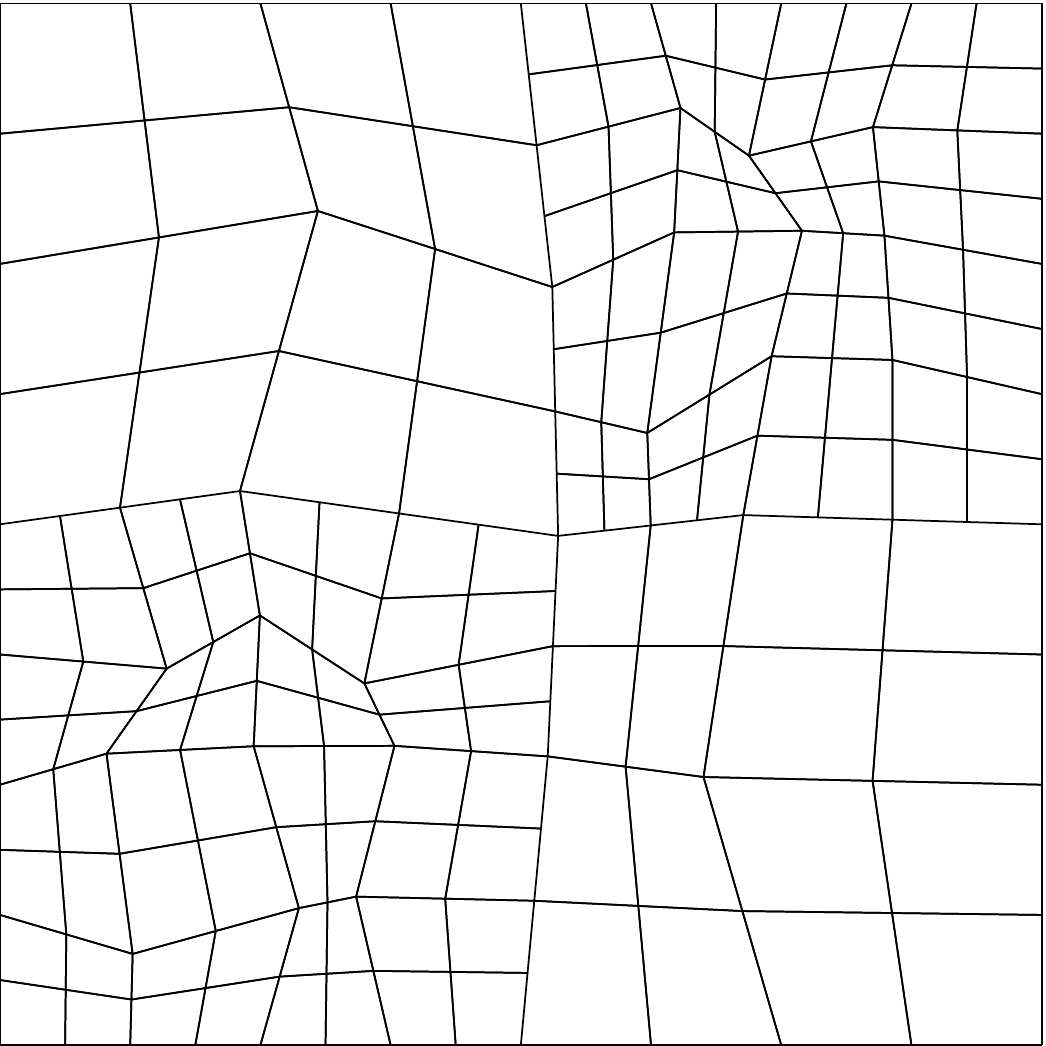}
}
\hspace*{3mm}
\subfloat[$\mathcal{M}_2$.]{
\includegraphics[width=0.25\textwidth]{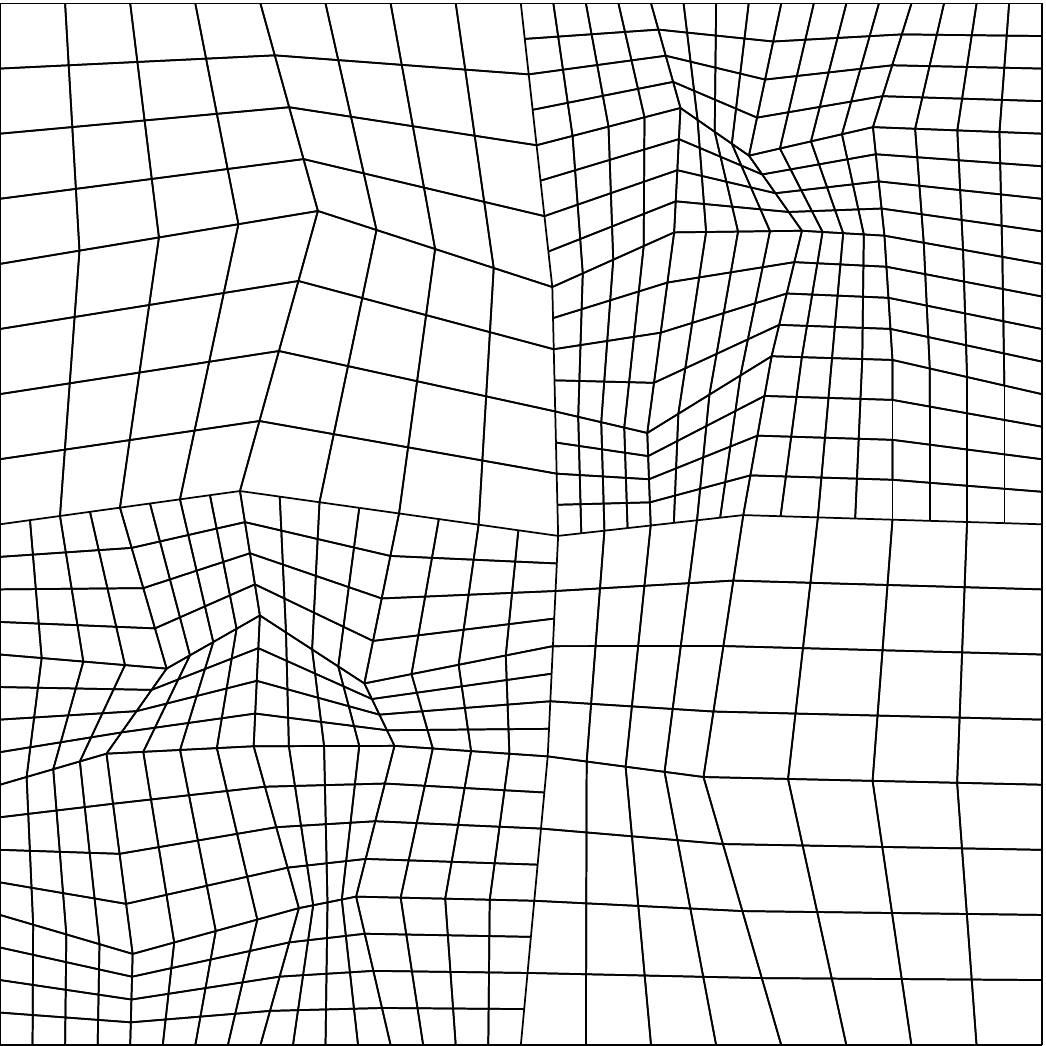}
}
\caption{\textbf{Convergence tests}. Base grid (left) and the first two recursively refined grids used for convergence test for distorted and non-conforming grids. All cells are divided in four in each refinement cycle.}
\label{fig:grid_refinement}
\end{figure}

\begin{table}[bpt]
\setlength{\tabcolsep}{5pt}
\caption{\textbf{Convergence tests}. Error and convergences rates for flow variables (a) and concentration solution (b) for the recursively refined grids shown in Fig.~\ref{fig:grid_refinement}. Dirichlet boundary conditions are imposed strongly, CG(SD).}
\label{tab:conv_grid}
\begin{center}
\footnotesize

\subfloat[Flow variables.]{
\label{tab:conv_flow_grid}
\begin{tabular}{|c|cc|cc|cc|cc|c|}
\hline
Grid & 
\multicolumn{2}{c|}{$\Vert p-p_h\Vert_a$} & 
\multicolumn{2}{c|}{$\Vert \mathbf{u}\cdot\mathbf{n}-U_h\Vert_{h,\mcFh}$} & 
\multicolumn{2}{c|}{$\Vert \mathbf{u}\cdot\mathbf{n}-V_h\Vert_{h,\mcFh}$} & 
\multicolumn{2}{c|}{$\Vert \mathcal{R}(U_h)\Vert_{\mcEh}$} &
$\Vert \mathcal{R}(V_h)\Vert_{\mcEh}$
\\ \hline
$\mathcal{M}_0$ & 0.08331 & -    & 0.116103 & -    & 0.044114 & -    & 0.5096 & -    & 7.3e-16\\ 
$\mathcal{M}_1$ & 0.04057 & 1.04 & 0.041738 & 1.48 & 0.017211 & 1.36 & 0.3153 & 0.69 & 7.1e-16\\
$\mathcal{M}_2$ & 0.02006 & 1.02 & 0.015556 & 1.42 & 0.007820 & 1.14 & 0.2233 & 0.50 & 6.9e-16\\
$\mathcal{M}_3$ & 0.00998 & 1.01 & 0.006116 & 1.35 & 0.003840 & 1.03 & 0.1585 & 0.49 & 1.0e-15\\ 
$\mathcal{M}_4$ & 0.00498 & 1.00 & 0.002556 & 1.26 & 0.001921 & 1.00 & 0.1122 & 0.50 & 1.4e-15\\ 
$\mathcal{M}_5$ & 0.00248 & 1.00 & 0.001133 & 1.17 & 0.000963 & 1.00 & 0.0793 & 0.50 & 2.4e-15\\ 
$\mathcal{M}_6$ & 0.00124 & 1.00 & 0.000527 & 1.10 & 0.000483 & 1.00 & 0.0560 & 0.50 & 4.6e-15\\
\hline
\end{tabular} }

\subfloat[Concentration solution with different flux (in parenthesis).]{
\label{tab:conv_transport_grid}
\begin{tabular}{|c|cc|cc|cc|} 
\hline
Grid & 
\multicolumn{2}{c|}{$\Vert c-c_h\Vert_{\mcEh}\ (\mathbf{u}\cdot\mathbf{n})$} & 
\multicolumn{2}{c|}{$\Vert c-c_h\Vert_{\mcEh}\ (U_h)$} & 
\multicolumn{2}{c|}{$\Vert c-c_h\Vert_{\mcEh}\ (V_h)$} \\ \hline
$\mathcal{M}_0$ & 0.07595 & - & 0.07864 & - & 0.07598 & - \\ 
$\mathcal{M}_1$ & 0.03821 & 0.99 & 0.03977 & 0.98 & 0.03822 & 0.99 \\ 
$\mathcal{M}_2$ & 0.01919 & 0.99 & 0.02007 & 0.99 & 0.01920 & 0.99 \\ 
$\mathcal{M}_3$ & 0.00964 & 0.99 & 0.01006 & 1.00 & 0.00964 & 0.99 \\ 
\hline
\end{tabular} }

\end{center}
\end{table}

\subsection{Barrier Problem}

In the next example we consider flow and transport through a barrier (low permeability region) and study the objectives (i), (v), (vi) and (vii). The problem is illustrated in Fig.~\ref{fig:barrier_problem}. Let $\Omega=(0,1)^2$, $\beta=0$ and use boundary conditions $p(0,y) = 1$, $p(1,y)=0$ and $\mathbf{u}\cdot\mathbf{n} = 0$ on $y=\{0,1\}$. For the transport problem, $\phi=1$,  $\Gamma_{\text{in}}=\{x=0\}\cap\partial\Omega$, $c_B=1$ and $c_0= 0$. The steady state pressure and velocity solution from the CG scheme on a fine grid is shown in Fig.~\ref{fig:pressure_barrier_problem}.

\begin{figure}[bpt]
\centering
\subfloat[Problem definition. Geometry, boundary conditions and permeability distribution. The permeability is given as $\mathbf{K}=k\mathbb{I}$.] {
\includegraphics[width=0.36\textwidth]{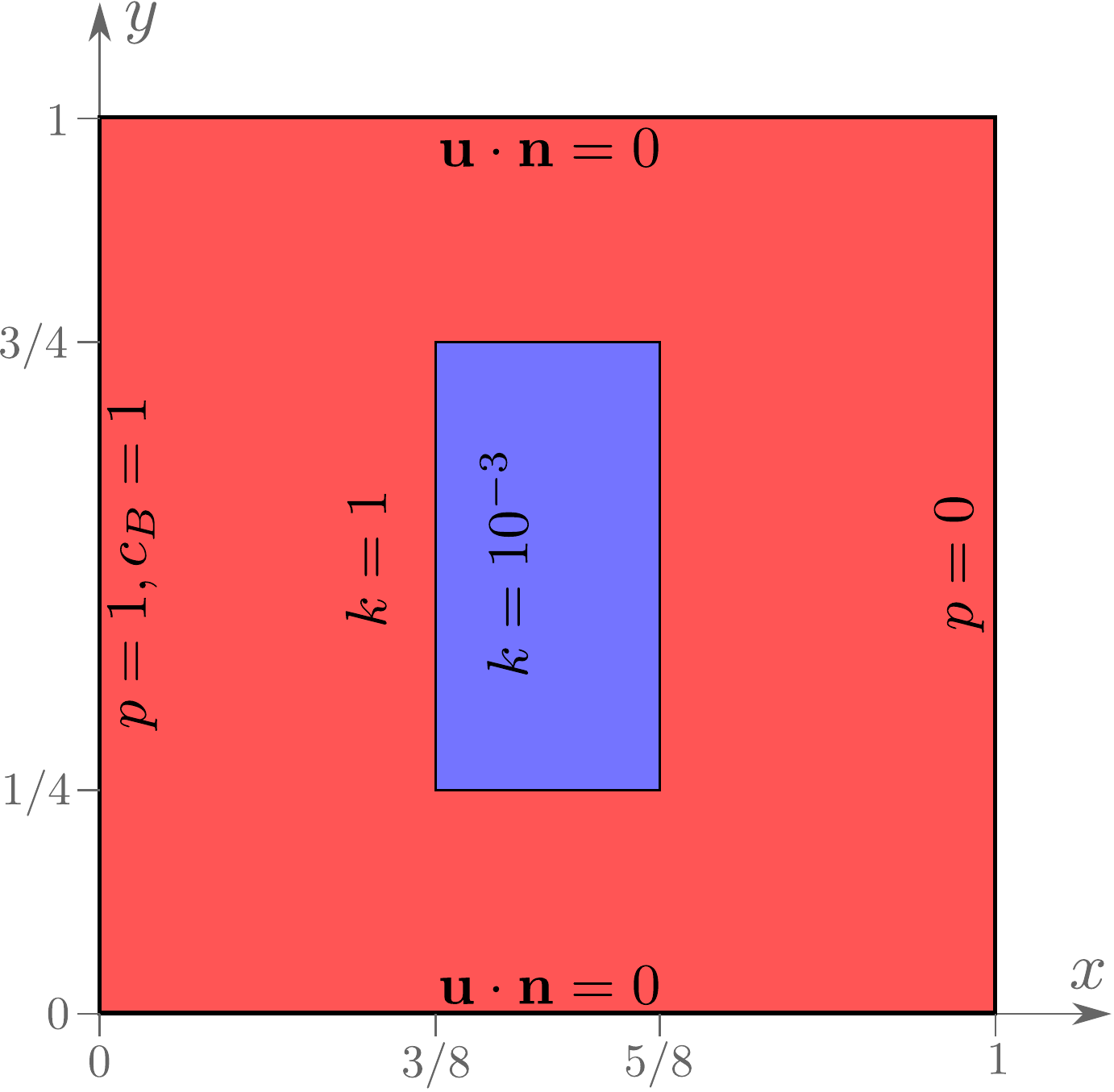}
}
\hspace*{5mm}
\subfloat[Steady state pressure and velocity solution from the CG scheme on a quadratic grid with $h=1/32$.] {
\includegraphics[width=0.4\textwidth]{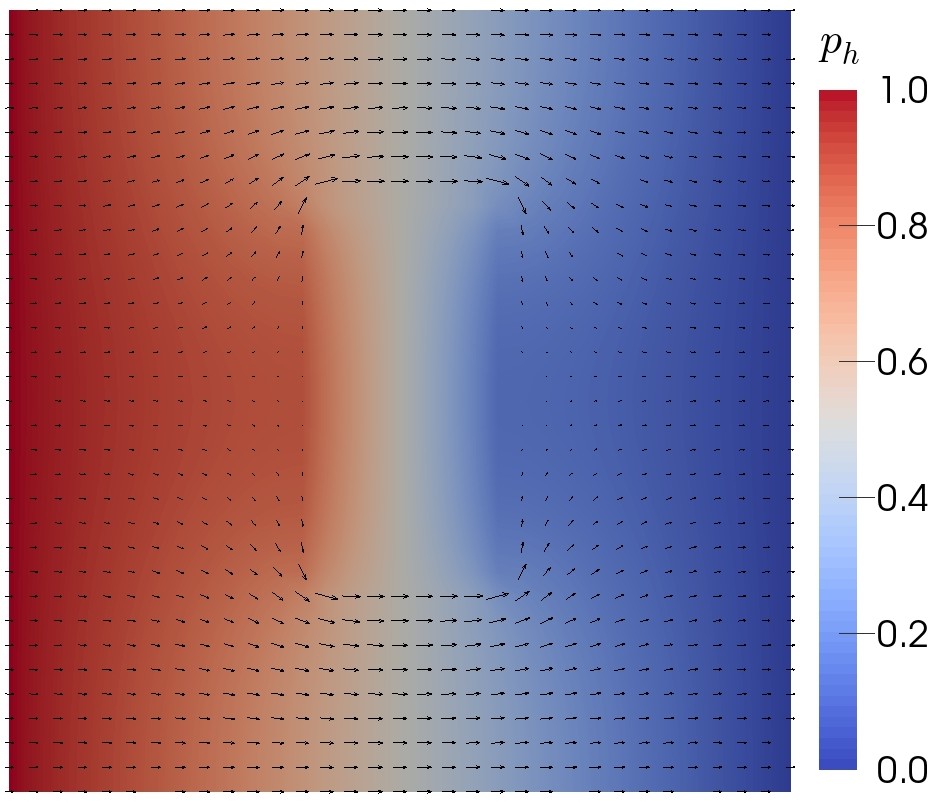}
\label{fig:pressure_barrier_problem}
}
\caption{\textbf{Barrier problem}. Problem definition (a) and numerical pressure solution (b).}
\label{fig:barrier_problem}
\end{figure}

First, consider the case when the standard average $\theta=1/2$ is used for flux calculations. The concentration solution with $\Delta t=0.01$ at $t=1$ and $t=2$ is shown in Fig.~\ref{fig:concentration_barrier_problem}, both for CG(SD,1/2) and PP(SD,1/2,L2). Furthermore, the concentration along the curve $y=0.735$ is plotted in the same figure. The solutions are close at $t=1$, although we observe some small unphysical oscillation close to the barrier interface for CG(SD,1/2). Both solutions are in the (physical) valid range $[0,1]$. However, at $t=2$, CG(SD,1/2) gives an unphysical solution as $c>1.0$ in some cells and since the solution oscillates close to the barrier interface. The solution with PP(SD,1/2,L2) is in the range $[0,1]$ and without oscillations. 

\begin{figure}[bpt]
\centering
\subfloat[CG(SD,1/2), $t=1$.]{
\includegraphics[width=0.3\textwidth]{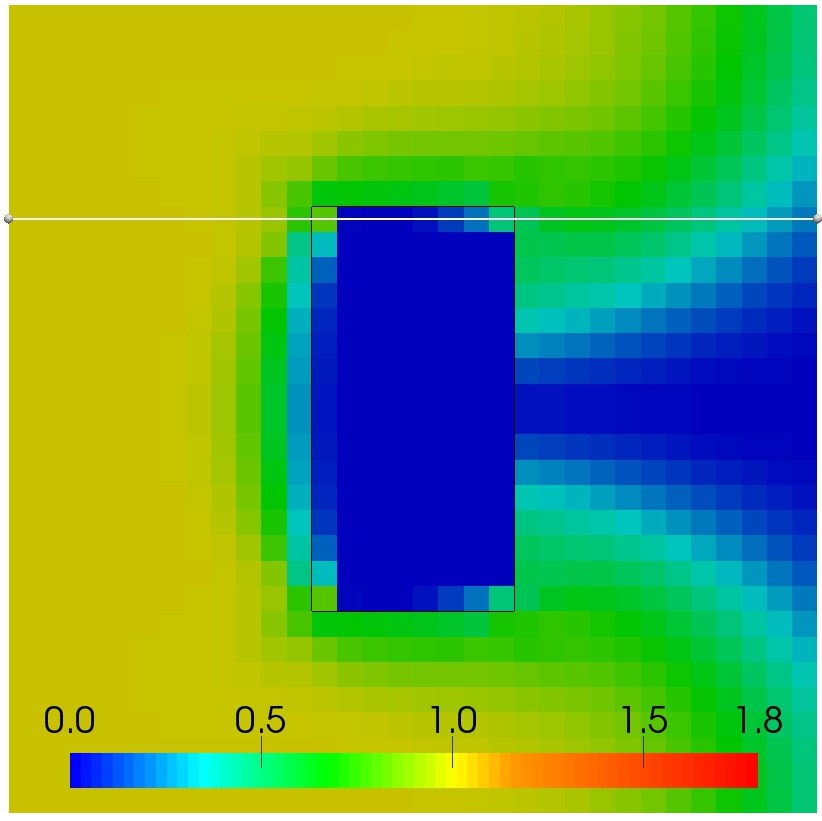}
}
%\hspace*{1mm}
\subfloat[PP(SD,1/2,L2), $t=1$.]{
\includegraphics[width=0.3\textwidth]{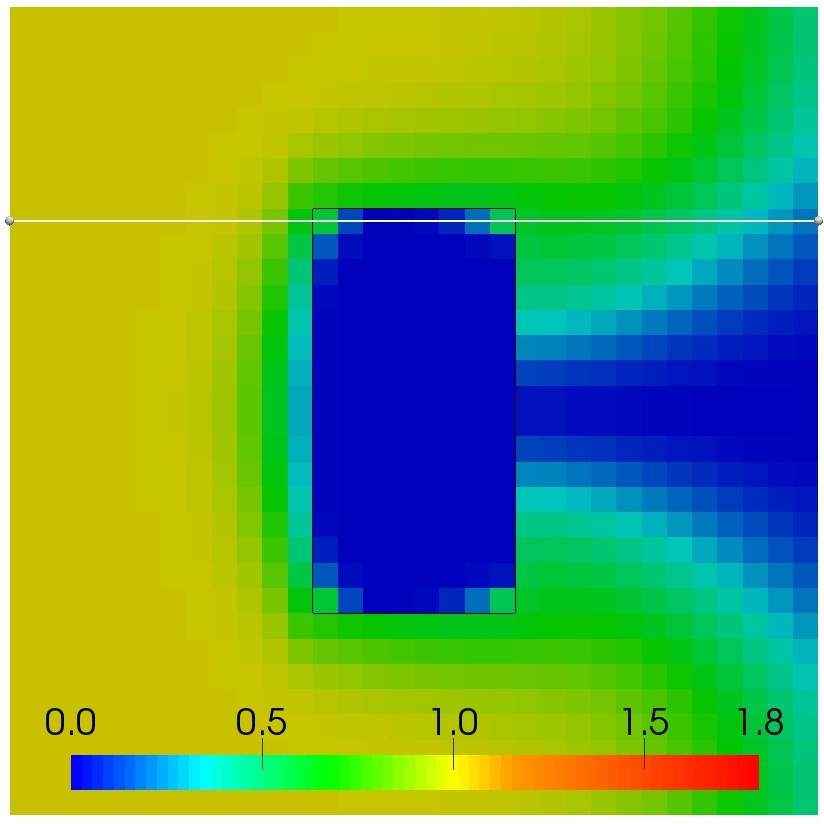}
}
%\hspace*{1mm}
\subfloat[Concentration over line, $t=1$.]{
\includegraphics[width=0.37\textwidth]{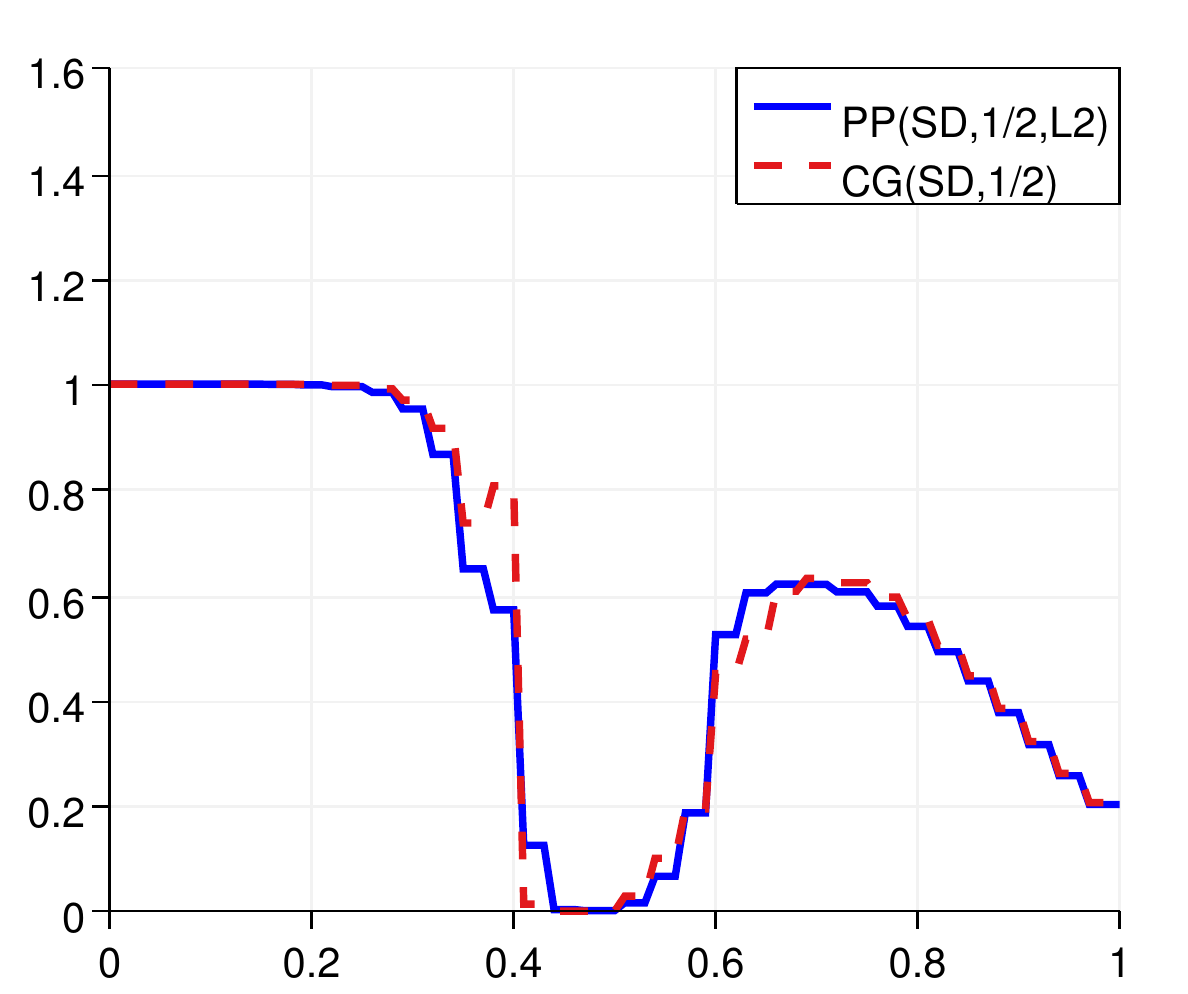}
} \\
\subfloat[CG(SD,1/2), $t=2$.]{
\includegraphics[width=0.3\textwidth]{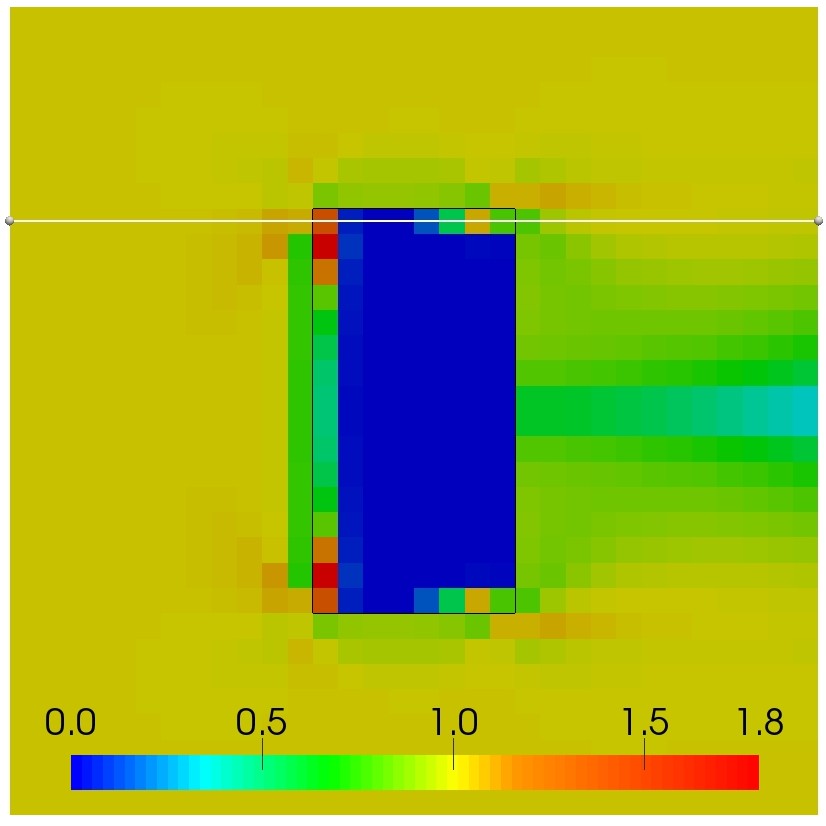}
}
%\hspace*{1mm}
\subfloat[PP(SD,1/2,L2), $t=2$.]{
\includegraphics[width=0.3\textwidth]{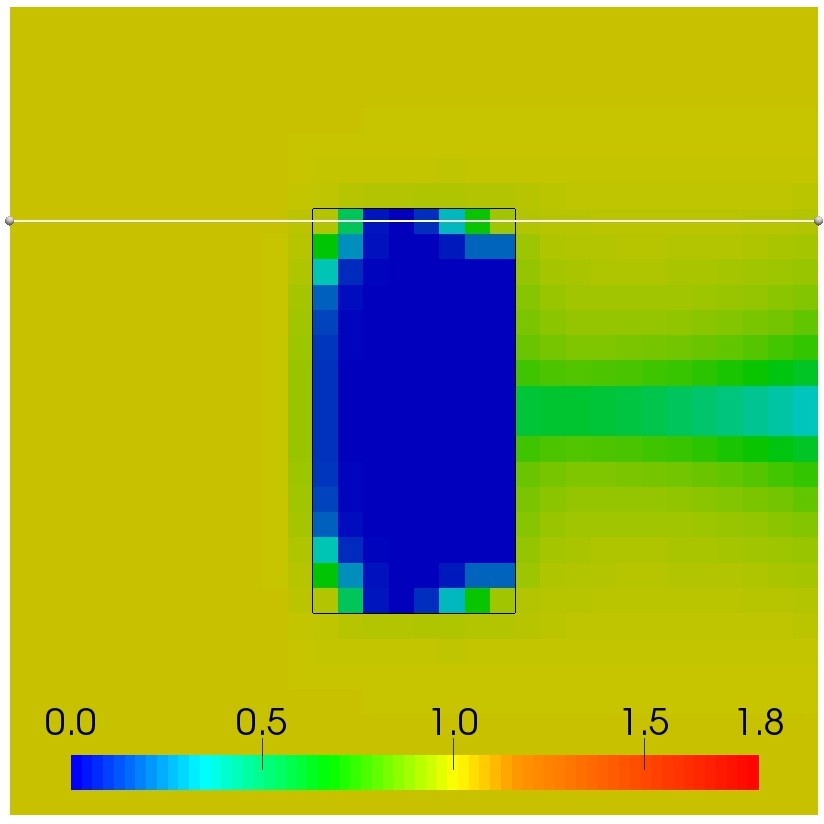}
}
%\hspace*{1mm}
\subfloat[Concentration over line, $t=2$.]{
\includegraphics[width=0.37\textwidth]{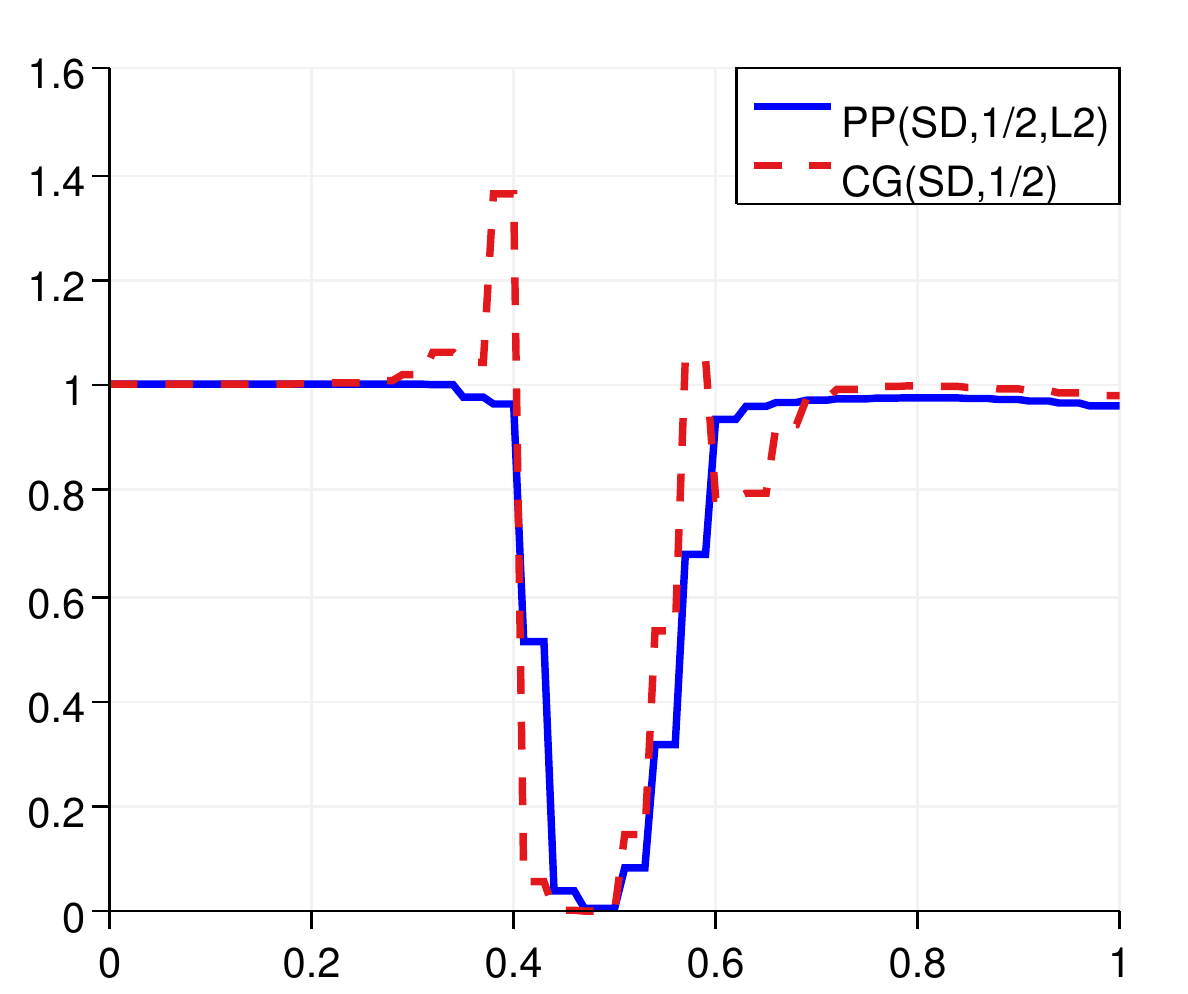}
}
\caption{\textbf{Barrier problem}. Concentration solution at two different times, with and without postprocessing. The standard weights $\theta=1/2$ is used for the average in calculations of CG flux, $U_h$. The solution along the white line ($y=0.735$) is plotted to the right. The low permeability region in inscribed in the black box.}
\label{fig:concentration_barrier_problem}
\end{figure}

Since the contrast in permeability is three orders of magnitude, we would expect very little flow into the barrier region. However, we see from Fig.~\ref{fig:concentration_barrier_problem} that the concentration in the corners of the barrier region is rather large. To cope with this we use harmonic averaging of the permeability, thus set $\theta=\vartheta$ in the flux averaging. Similar results as with $\theta=1/2$ are displayed in Fig.~\ref{fig:concentration_barrier_problem_harm}. Clearly, harmonic averaging reduces the inflow into the barrier region when we use CG flux, but still we get an unphysical solution (Fig.~\ref{fig:concentration_barrier_problem_harm_cg1} and \ref{fig:concentration_barrier_problem_harm_cg2}). However, when we postprocess this flux with minimization in the standard $L^2$ norm, the effect of harmonic averaging reduces since the concentration in the corners is now high (Fig.~\ref{fig:concentration_barrier_problem_harm_pp1} and \ref{fig:concentration_barrier_problem_harm_pp2}). If we instead postprocess with minimization in the weighted $L^2$ norm, we see that the barrier region is much less permeable  (Fig.~\ref{fig:concentration_barrier_problem_harm_pp1} and \ref{fig:concentration_barrier_problem_harm_pp2}). This clearly demonstrates that using the weighted $L^2$ norm is necessary to preserve low permeable interfaces and should be used in combination with harmonic averaging of the CG flux.

\begin{figure}[bpt]
\centering
\subfloat[CG(SD,$\vartheta$), $t=1$.]{
\includegraphics[width=0.3\textwidth]{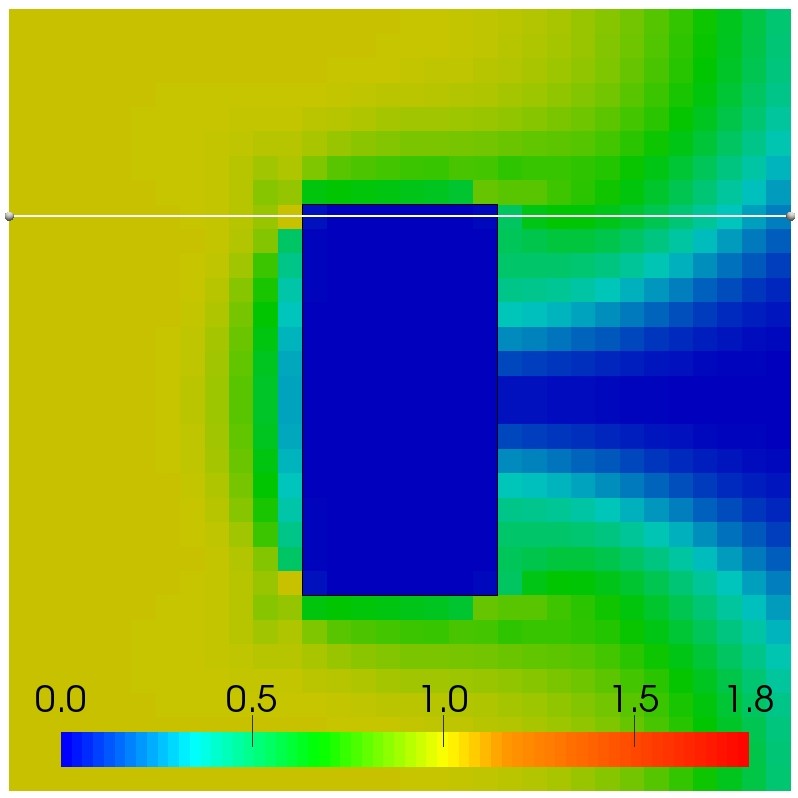}
\label{fig:concentration_barrier_problem_harm_cg1}
}
%\hspace*{1mm}
\subfloat[PP(SD,$\vartheta$,L2), $t=1$.]{
\includegraphics[width=0.3\textwidth]{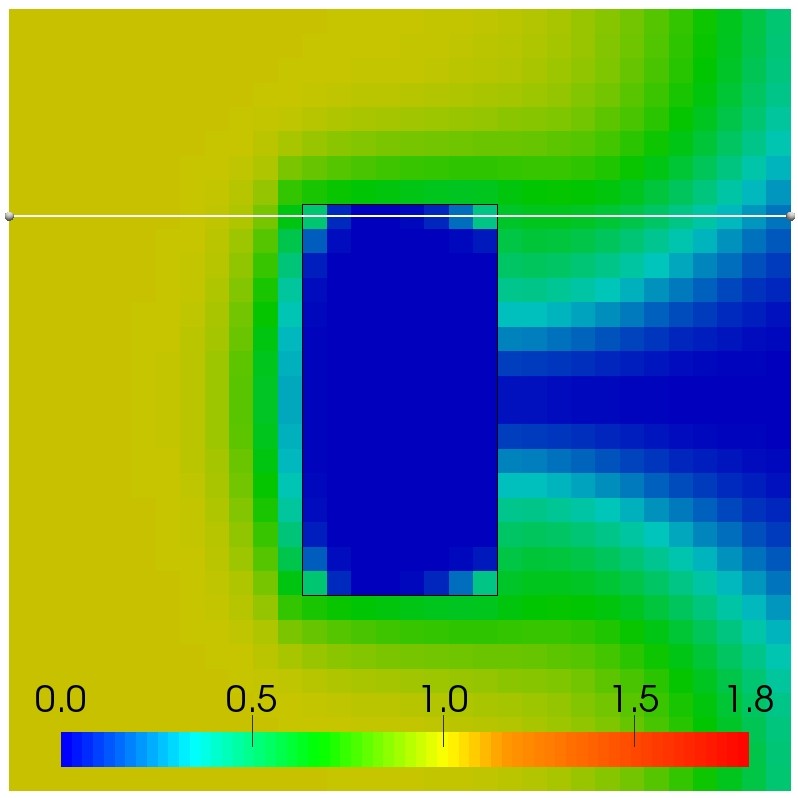}
\label{fig:concentration_barrier_problem_harm_pp1}
}
%\hspace*{1mm}
\subfloat[PP(SD,$\vartheta$,wL2), $t=1$.]{
\includegraphics[width=0.3\textwidth]{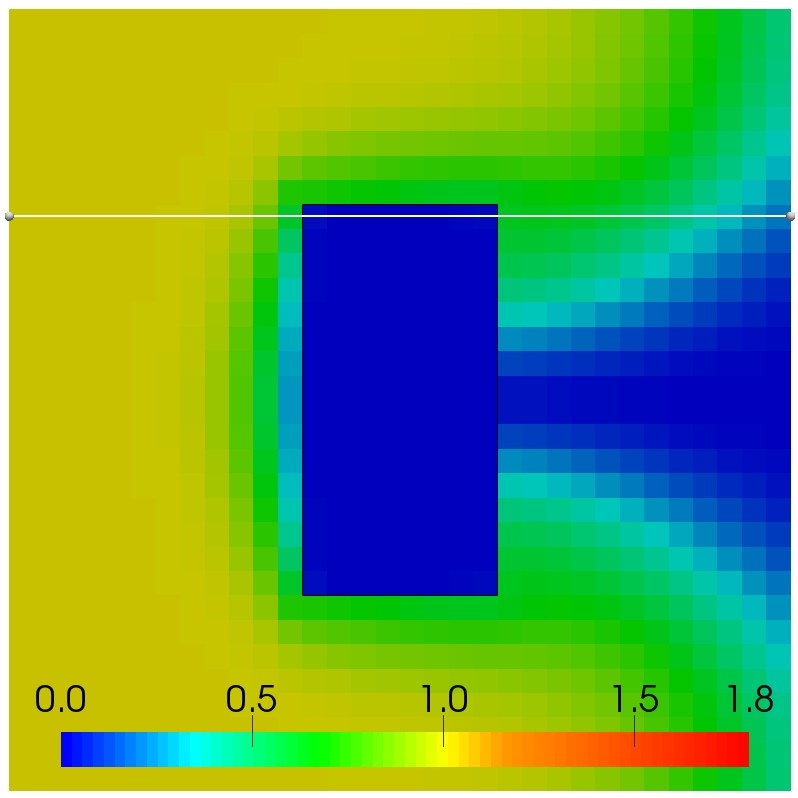}
\label{fig:concentration_barrier_problem_harm_ppw1}
} \\
\subfloat[CG(SD,$\vartheta$), $t=2$.]{
\includegraphics[width=0.3\textwidth]{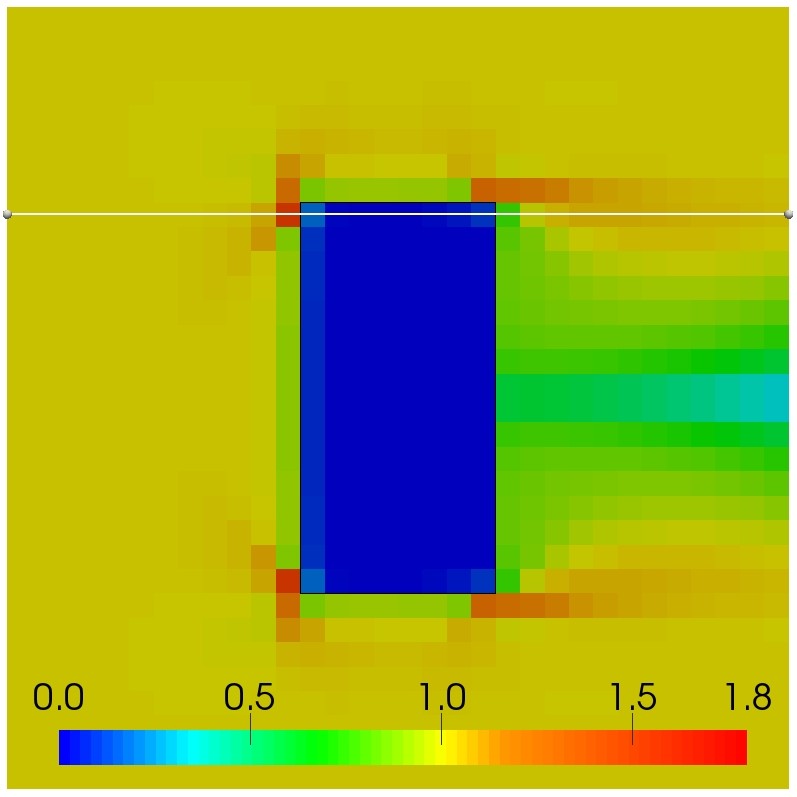}
\label{fig:concentration_barrier_problem_harm_cg2}
}
%\hspace*{1mm}
\subfloat[PP(SD,$\vartheta$,L2), $t=2$.]{
\includegraphics[width=0.3\textwidth]{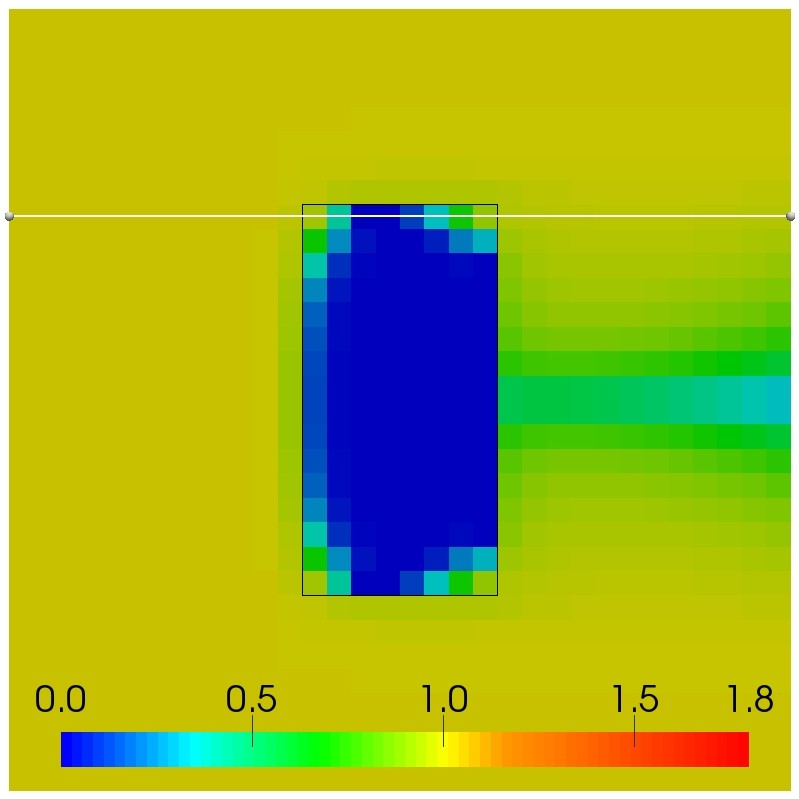}
\label{fig:concentration_barrier_problem_harm_pp2}
}
%\hspace*{1mm}
\subfloat[PP(SD,$\vartheta$,wL2), $t=2$.]{
\includegraphics[width=0.3\textwidth]{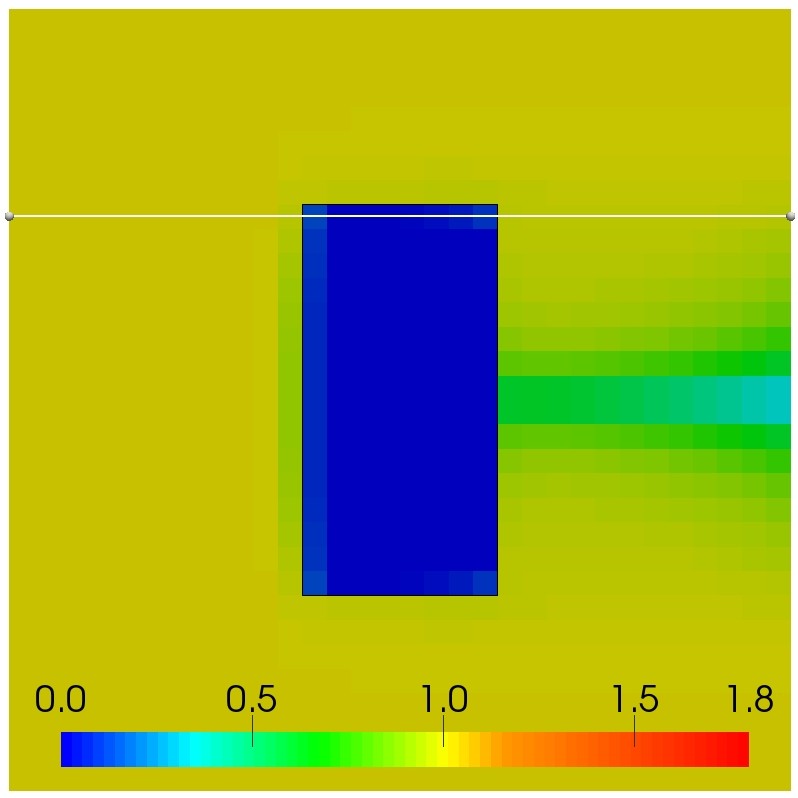}
\label{fig:concentration_barrier_problem_harm_ppw2}
} \\
\subfloat[Concentration over line, $t=1$.]{
\includegraphics[width=0.37\textwidth]{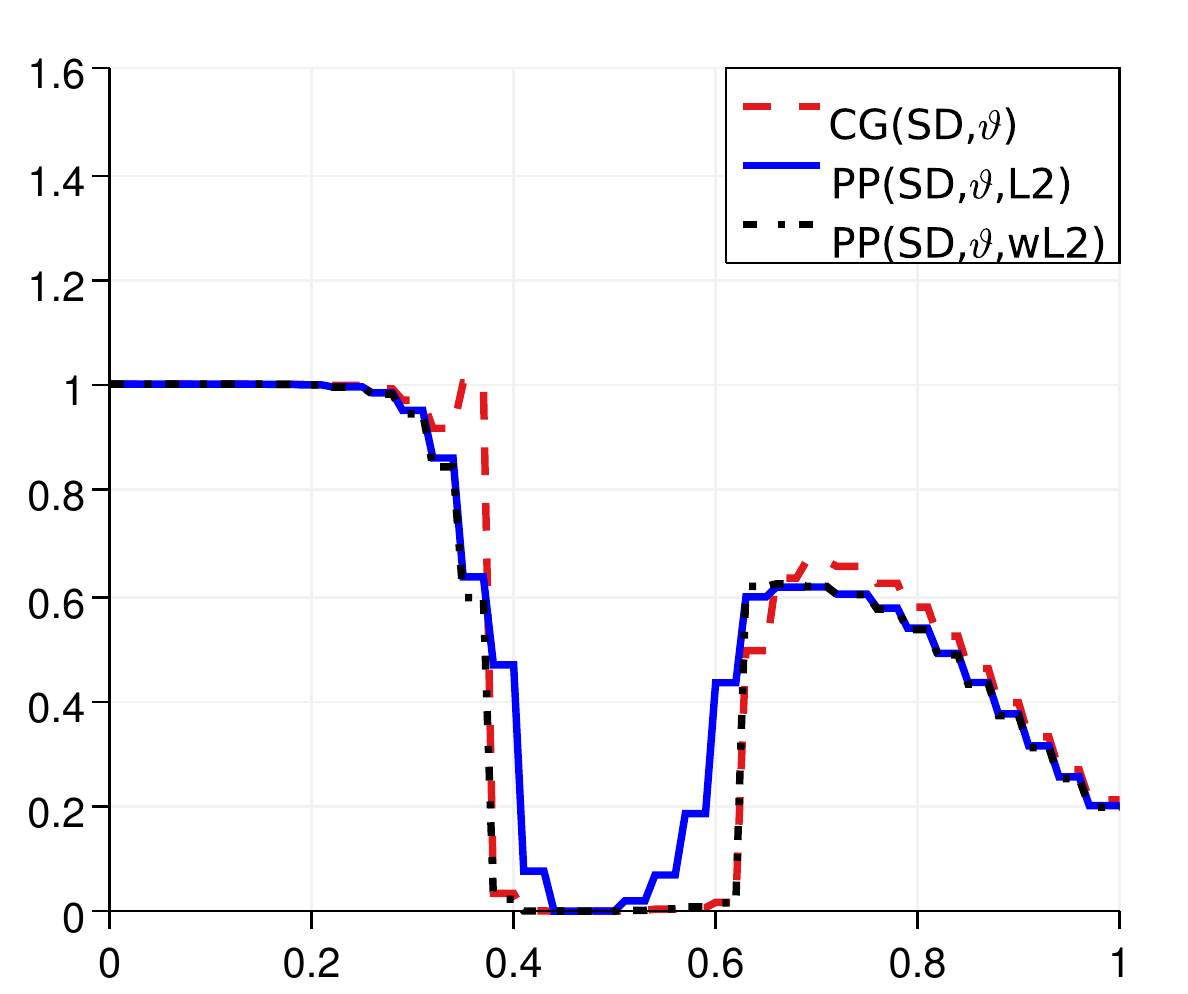}
}
\hspace*{1mm}
\subfloat[Concentration over line, $t=2$.]{
\includegraphics[width=0.37\textwidth]{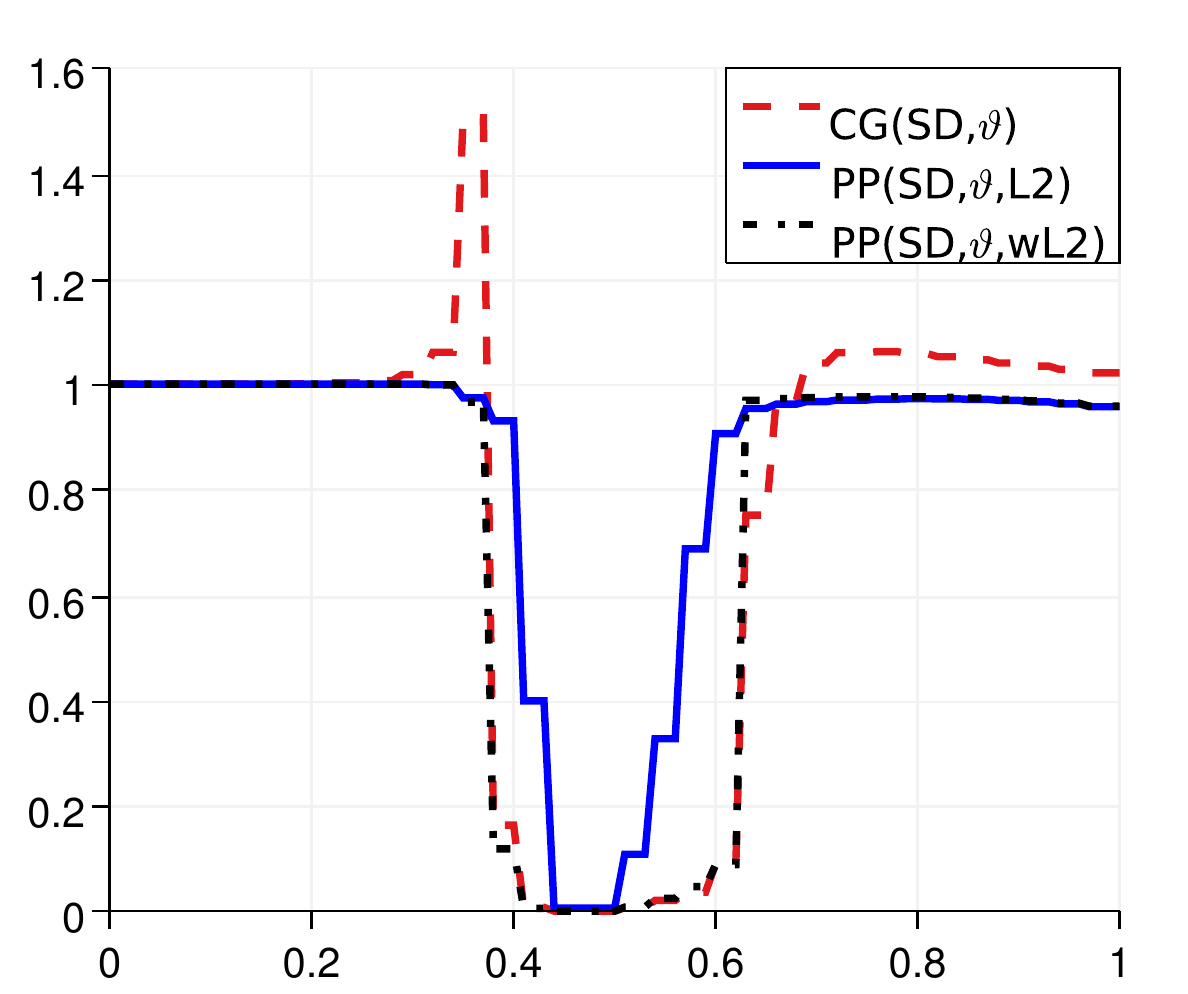}
}
\caption{\textbf{Barrier problem}. Concentration solution at two different times, with CG flux and postprocessed flux with the standard $L^2$ norm and the weighted $L^2$ norm. The harmonic weights $\theta=\vartheta$ is used for the average in calculations of CG flux, $U_h$. The solution along the white line ($y=0.735$) is plotted in the bottom row. The low permeability region in inscribed in the black box.}
\label{fig:concentration_barrier_problem_harm}
\end{figure}

The overshoot quantity, $\mathcal{O}(c_h)$, the minimum and maximum of $c_h$ and the norm of the residual is reported in Table \ref{tab:barrier_overshoot} for the different cases studied above. We see that for all postprocessing cases, $\mathcal{R}(V_h)$ and $\mathcal{O}(c_h)$ is zero down to machine precision, and that $c_h\in[0,1]$. This is not satisfied with CG flux, which is not locally conservative.

\begin{table}[bpt]
\caption{\textbf{Barrier problem}. Norm of residual, $\Vert \mathcal{R}(\cdot)\Vert_{\mcEh}$, overshoot, $\mathcal{O}(c_h)$, and minimum and maximum value of concentration solution at $t=2$ for different flux approximations.}
\label{tab:barrier_overshoot}
\begin{center}
\footnotesize
\begin{tabular}{lrrrrr}
\hline
Method &
$\Vert \mathcal{R}(U_h)\Vert_{\mcEh}$ &
$\Vert \mathcal{R}(V_h)\Vert_{\mcEh}$ &
$\mathcal{O}(c_h)$ &
min($c_h$) &
max($c_h$)
\\ \hline
CG(SD,1/2)    			& 1.184   & -       & 0.04107  & 2.1e-12  & 1.822 \\
CG(SD,$\vartheta$)      & 1.895   & -       & 0.03285  & 3.1e-13  & 1.505 \\
PP(SD,1/2,L2) 			& -       & 4.8e-16 & 3.2e-17  & 2.2e-11  & 1.000 \\
PP(SD,$\vartheta$,L2)   & -       & 9.7e-16 & 1.6e-17  & 1.4e-10  & 1.000 \\
PP(SD,$\vartheta$,wL2)  & -       & 2.7e-15 & 4.8e-17  & 3.0e-13  & 1.000 \\
\hline
\end{tabular}
\end{center}
\end{table}

Next, we compare the postprocessing step with the CG solver in terms of efficiency and computational complexity. Both the CG problem (Eq.~\eqref{eq:cg_scheme_strong}) and the postprocessing problem (Eq.~\eqref{eq:pp_variational}) are symmetric and positive definite, so we use the conjugate gradient method as linear solver. In Table \ref{tab:barrier_timing} we report on degrees of freedom (DoF), condition number ($\kappa$)\footnote{The condition numbers are estimated by routines in the deal.II library.}, number of iterations in the linear solver (it), and the CPU time used by the linear solver (time). This is done for the CG problem and the postprocessing problem both with and without weighting for recursively refined regular Cartesian grids. We consider both the standard conjugate gradient solver and the preconditioned conjugate gradient with a symmetric successive overrelaxation preconditioner, SSOR(1.5). For all cases we use strong Dirichlet conditions and harmonic weighting of the CG flux. 

Without preconditioning, we see that PP(SD,$\vartheta$,L2) is much less costly to solve for than CG(SD,$\vartheta$), both in terms of the condition number and solver time. PP(SD,$\vartheta$,wL2) is more expensive, and the solution time is $\sim 70\%$ of that of CG(SD,$\vartheta$). This is because weighting introduces high aspect ratios in the system matrix, see eq.~\eqref{eq:pp_matrix_entries}. However, if we apply a relatively simple preconditioner as SSOR, the condition numbers and solution times drop remarkably for CG(SD,$\vartheta$) and PP(SD,$\vartheta$,wL2), such that the computational complexity of PP(SD,$\vartheta$,L2) and PP(SD,$\vartheta$,wL2) are almost similar. Still, the additional cost of the postprocessing step is significant ($\sim55\%$ for PP(SD,$\vartheta$,L2) and $\sim 60\%$ for PP(SD,$\vartheta$,wL2)).

Finally, we test the sensitivity of the computational complexity with respect to the permeability contrast. This is done by keeping the grid resolution fixed at $1/h=64$ and then vary the permeability in the low permeable block, denoted $k_b$. These results are reported in Table \ref{tab:barrier_timing_perm}. For the case without preconditioning, we see that the condition number and linear solver time for CG(SD,$\vartheta$) and PP(SD,$\vartheta$,wL2) scales badly with the permeability contrast, whereas PP(SD,$\vartheta$,L2) is nearly unaffected. This is as expected since the system matrix for PP(SD,$\vartheta$,L2) is independent on the permeability, while for CG(SD,$\vartheta$) and PP(SD,$\vartheta$,wL2) it is not. However, if we look at the preconditioned system, we see that the effect of the permeability contrast almost vanishes. Hence, for this problem, the SSOR preconditioner is able to remove 
the effect of the permeability contrast on the condition number.

\begin{table}[bpt]
\caption{\textbf{Barrier problem}. Computational complexity for different problems; DoF: Degrees of Freedom, $\kappa$: condition number, it: number of iterations in linear solver, time: CPU time used by linear solver including initialization of the preconditioner (median value over 11 runs). The linear solver is the (preconditioned) conjugate gradient method with residual tolerance $10^{-12}$.}
\label{tab:barrier_timing}
\centering
\subfloat[Without preconditioning.]{
\footnotesize
\begin{tabular}{|r|rrrr|rrrr|rrrr|}
\hline
& \multicolumn{4}{|c|}{CG(SD,$\vartheta$)}
& \multicolumn{4}{|c|}{PP(SD,$\vartheta$,L2)}
& \multicolumn{4}{|c|}{PP(SD,$\vartheta$,wL2)} \\
$1/h$ & DoF   & $\kappa$ & it    & time   & DoF    & $\kappa$ & it    & time   & DoF   & $\kappa$ & it   & time \\ \hline
16    & 289   & 5505     & 151   & 0.0107 & 256    & 58       & 40    & 0.0018 & 256   & 3611     & 136  & 0.0073 \\
32    & 1089  & 21114    & 443   & 0.1039 & 1024   & 220      & 85    & 0.0136 & 1024  & 12748    & 416  & 0.0766 \\    
64    & 4225  & 83607    & 1203  & 0.4129 & 4096   & 856      & 163   & 0.0413 & 4096  & 49475    & 1037 & 0.2893 \\
128   & 16641 & 333602   & 2915  & 3.4805 & 16384  & 3372     & 307   & 0.2758 & 16384 & 196428   & 2350 & 2.2415 \\ \hline  
\end{tabular}
} \\
\subfloat[With SSOR(1.5) precondtioner.]{
\footnotesize
\begin{tabular}{|r|rrrr|rrrr|rrrr|}
\hline
& \multicolumn{4}{|c|}{CG(SD,$\vartheta$)}
& \multicolumn{4}{|c|}{PP(SD,$\vartheta$,L2)}
& \multicolumn{4}{|c|}{PP(SD,$\vartheta$,wL2)} \\
$1/h$ & DoF   & $\kappa$ & it    & time   & DoF    & $\kappa$ & it    & time   & DoF   & $\kappa$ & it   & time \\ \hline
16    & 289   & 9.1      & 27    & 0.0040 & 256    & 11.6     & 25    & 0.0026 & 256   & 10.6     & 27   & 0.0028 \\
32    & 1089  & 30.2     & 43    & 0.0221 & 1024   & 39.2     & 38    & 0.0125 & 1024  & 33.8     & 41   & 0.0135 \\    
64    & 4225  & 110.6    & 77    & 0.0731 & 4096   & 146.2    & 62    & 0.0375 & 4096  & 121.8    & 69   & 0.0444 \\
128   & 16641 & 424.7    & 147   & 0.4327 & 16384  & 567.7    & 109   & 0.2116 & 16384 & 465.3    & 126  & 0.2459 \\ \hline  
\end{tabular}
}
\end{table}

\begin{table}[bpt]
\caption{\textbf{Barrier problem}. Computational complexity for different problems; DoF: Degrees of Freedom, $\kappa$: condition number, it: number of iterations in linear solver, time: CPU time used by linear solver including initialization of the preconditioner (median value over 11 runs). The linear solver is the (preconditioned) conjugate gradient method with residual tolerance $10^{-12}$. The grid resolution is kept constant at $1/h=64$, but the permeability in the low permeable block, $k_b$, is varied.}
\label{tab:barrier_timing_perm}
\centering
\subfloat[Without preconditioning.]{
\footnotesize
\begin{tabular}{|r|rrrr|rrrr|rrrr|}
\hline
& \multicolumn{4}{|c|}{CG(SD,$\vartheta$)}
& \multicolumn{4}{|c|}{PP(SD,$\vartheta$,L2)}
& \multicolumn{4}{|c|}{PP(SD,$\vartheta$,wL2)} \\
$k_b$     & DoF   & $\kappa$ & it    & time   & DoF    & $\kappa$ & it    & time   & DoF   & $\kappa$ & it   & time \\ \hline
$10^{-1}$ & 4225  & 1885     & 275   & 0.1085 & 4096   & 856      & 161   & 0.0484 & 4096  & 1331     & 221  & 0.0665 \\
$10^{-3}$ & 4225  & 83607    & 1203  & 0.4127 & 4096   & 856      & 163   & 0.0413 & 4096  & 49475    & 1037 & 0.2875 \\    
$10^{-5}$ & 4225  & 8328390  & 2565  & 0.9362 & 4096   & 856      & 163   & 0.0375 & 4096  & 4931220  & 2364 & 0.6897 \\ \hline  
\end{tabular}
} \\
\subfloat[With SSOR(1.5) precondtioner.]{
\footnotesize
\begin{tabular}{|r|rrrr|rrrr|rrrr|}
\hline
& \multicolumn{4}{|c|}{CG(SD,$\vartheta$)}
& \multicolumn{4}{|c|}{PP(SD,$\vartheta$,L2)}
& \multicolumn{4}{|c|}{PP(SD,$\vartheta$,wL2)} \\
$k_b$     & DoF   & $\kappa$ & it    & time   & DoF    & $\kappa$ & it    & time   & DoF   & $\kappa$ & it   & time \\ \hline
$10^{-1}$ & 4225  & 111      & 77    & 0.0649 & 4096   & 146      & 61    & 0.0369 & 4096  & 123      & 68   & 0.0372 \\
$10^{-3}$ & 4225  & 111      & 77    & 0.0809 & 4096   & 146      & 62    & 0.0353 & 4096  & 122      & 69   & 0.0395 \\    
$10^{-5}$ & 4225  & 111      & 77    & 0.0934 & 4096   & 146      & 62    & 0.0467 & 4096  & 122      & 69   & 0.0506 \\ \hline  
\end{tabular}
}
\end{table}

\subsection{Channel Problem}
To further investigate the importance of harmonic averaging (objective (v)), consider now flow and transport through a channel with corners, see Fig.~\ref{fig:channel_problem}. The problem parameters are the same as for the barrier problem, except for the permeability distribution, which now forms a channel through the domain, and the boundary concentration, $c_B$, which is one into the channel and zero elsewhere. The channel has permeability $k=1$, while the surroundings have permeability $k=k_s\ll 1$, so we expect most of the flow to be in the channel. We only consider harmonic averaging ($\theta=\vartheta$), but use both the standard $L^2$ norm and the weighted $L^2$ norm for minimization in the postprocessing method, PP(SD,$\vartheta$,L2) and PP(SD,$\vartheta$,wL2), respectively. We study the cases $k_s=10^{-2}$ and $k_s=10^{-5}$, and set $\Delta t = 0.005$ and $T=2$.

The concentration solutions for the different scenarios are displayed in Fig.~\ref{fig:concentration_channel_problem}, and residuals, overshoot and minimum and maximum values are reported in Table \ref{tab:channel_overshoot}. 
For $k_s=10^{-2}$, we get $c_h\gg 0$ in some areas outside but close to the channel. This seems reasonable, as the contrast in permeability is two orders of magnitude. However, for $k_s=10^{-5}$ the interface should be close to impermeable, and we expect very low concentrations outside the channel. For CG(SD,$\vartheta$), we observe that $c_h\sim 0$ outside the channel for $k=10^{-5}$, but that $c_h>1$ in many elements due to lack of local conservation (Fig.\ \ref{fig:concentration_channel_problem_cg-2} and \ref{fig:concentration_channel_problem_cg-5}). For the case PP(SD,$\vartheta$,L2), we see that the difference in solution for $k_s=10^{-2}$ and $k=10^{-5}$ is rather small, and that $1>c_h\gg 0$ for some elements outside the channel also for $k=10^{-5}$ (Fig.\ \ref{fig:concentration_channel_problem_pp-2} and \ref{fig:concentration_channel_problem_pp-5}). This is problematic, since the interface should be close to impermeable. If we instead minimize in the weighted $L^2$ norm, PP(SD,$\vartheta$,wL2), we are able to resolve this issue so that the interface is close to impermeable (Fig.\ \ref{fig:concentration_channel_problem_pp-2} and \ref{fig:concentration_channel_problem_pp-5}).

The shortcoming of postprocessing with the standard $L^2$ norm is that it does not take the permeability contrast into account. Let $F$ be a face on the boundary of the channel. With harmonic averaging, $U_h\vert_F\sim 0$. However, in the minimization step without weighting, we allow for a flux correction that is small in absolute value compared to fluxes on faces inside the channel, but still relatively large compared to $U_h\vert_F$. Thus, $V_h\vert_F$ might be orders of magnitude larger than $U_h\vert_F$, resulting in a more permeable interface. When we use the weighted $L^2$ norm, $F$ is given a large weight (the inverse of the effective permeability, $k_e$), so that we do not allow for such large relative correction.

\begin{figure}[bpt]
\centering
\includegraphics[width=0.4\textwidth]{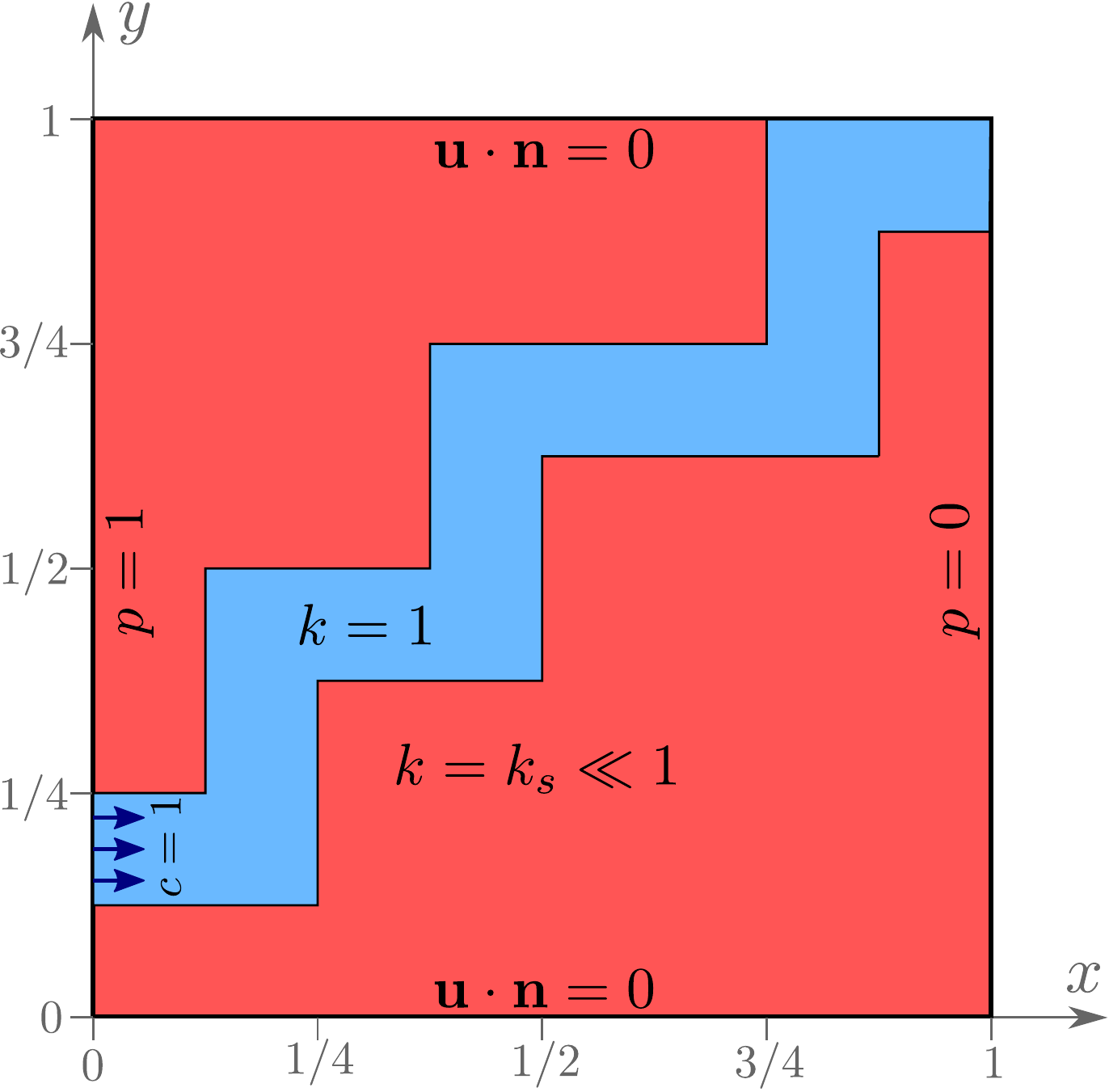}
\caption{\textbf{Channel problem}. Problem definition. Boundary conditions are $p=1$ on the left, $p=0$ on the right, and $\bfu\cdot\bfn =0$ on the bottom and top. The boundary concentration is $c=1$ into the channel only, and $0$ elsewhere.}
\label{fig:channel_problem}
\end{figure}

\begin{figure}[bpt]
\centering
\subfloat[CG(SD,$\vartheta$), $k_s=10^{-2}$.]{
\includegraphics[width=0.27\textwidth]{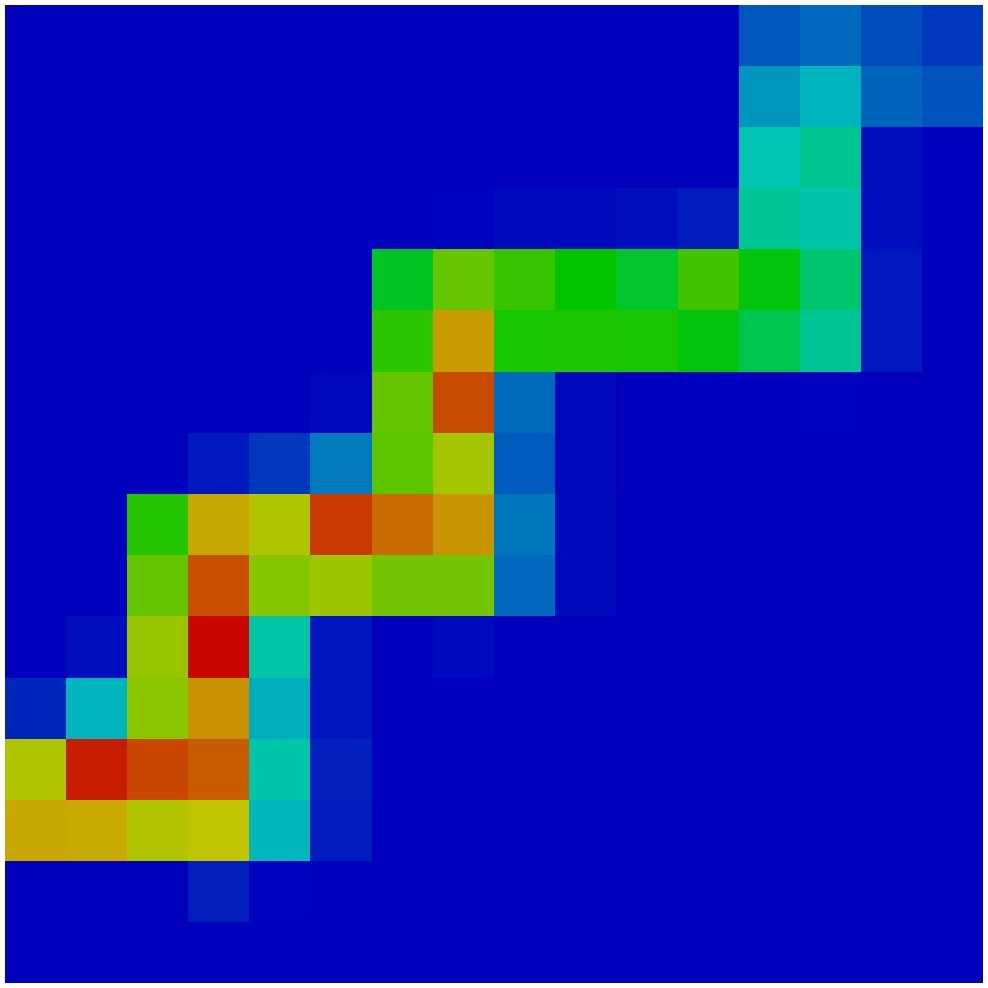}
\label{fig:concentration_channel_problem_cg-2}
}
%\hspace*{1mm}
\subfloat[PP(SD,$\vartheta$,L2), $k_s=10^{-2}$.]{
\includegraphics[width=0.27\textwidth]{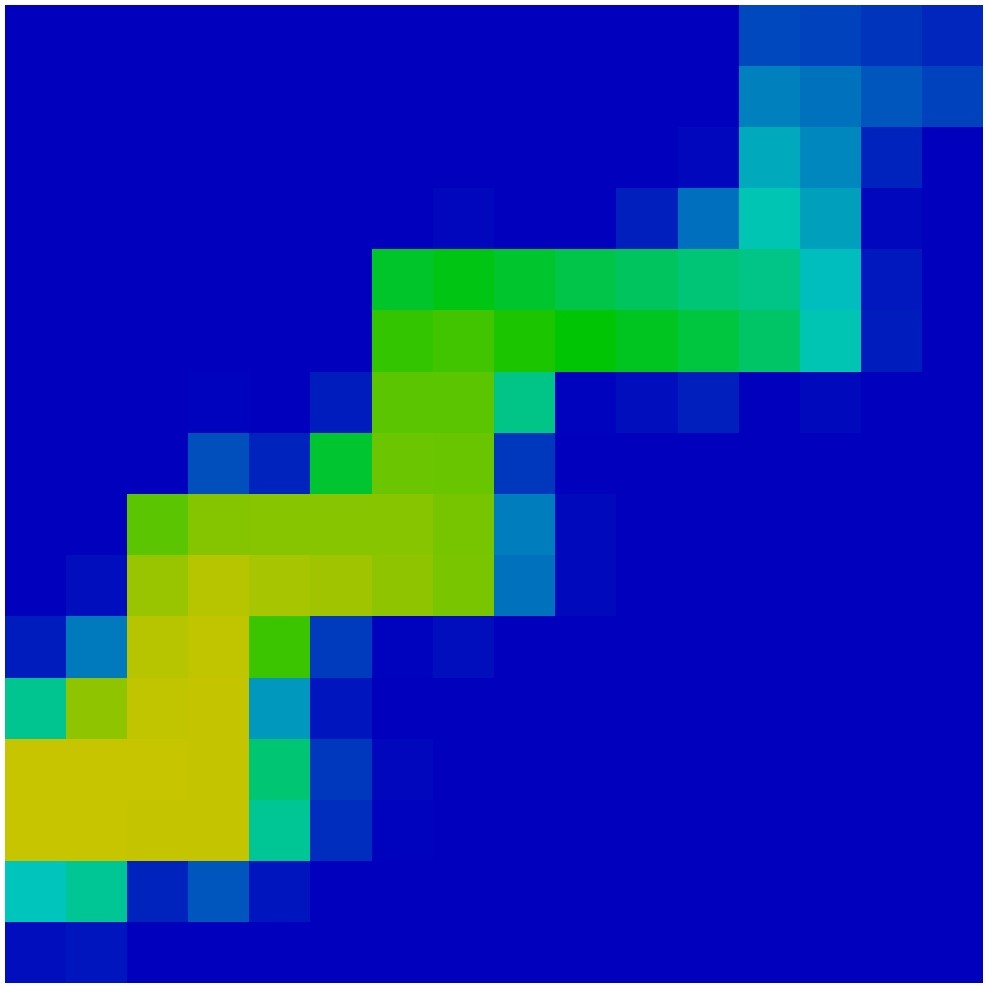}
\label{fig:concentration_channel_problem_pp-2}
}
%\hspace*{1mm}
\subfloat[PP(SD,$\vartheta$,wL2), $k_s=10^{-2}$.]{
\includegraphics[width=0.27\textwidth]{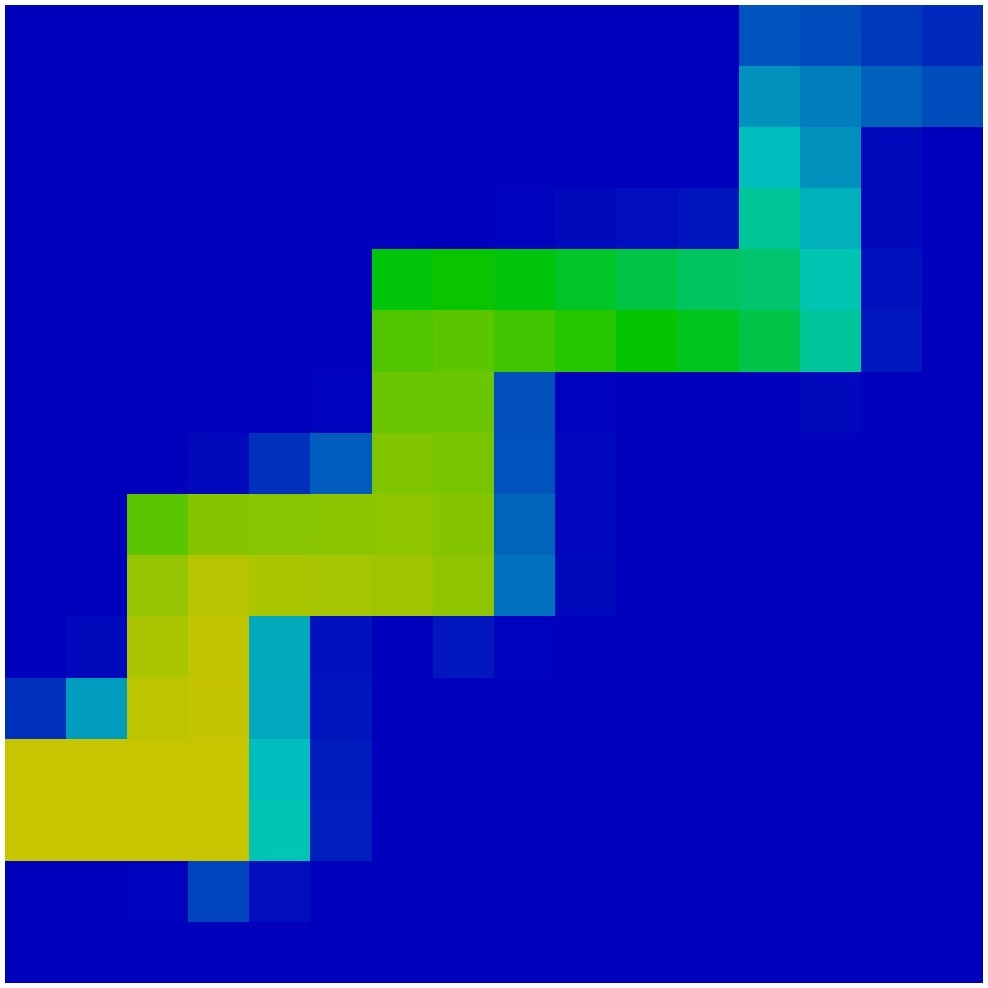}
\label{fig:concentration_channel_problem_ppw-2}
} 
\subfloat{
\includegraphics[width=0.05\textwidth]{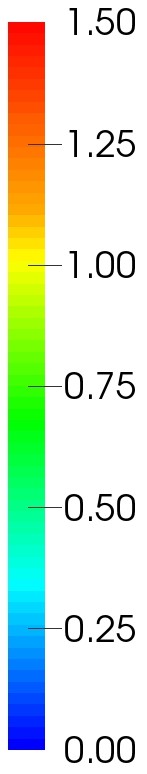}
} 
\\
\addtocounter{subfigure}{-1}
\subfloat[CG(SD,$\vartheta$), $k_s=10^{-5}$.]{
\includegraphics[width=0.27\textwidth]{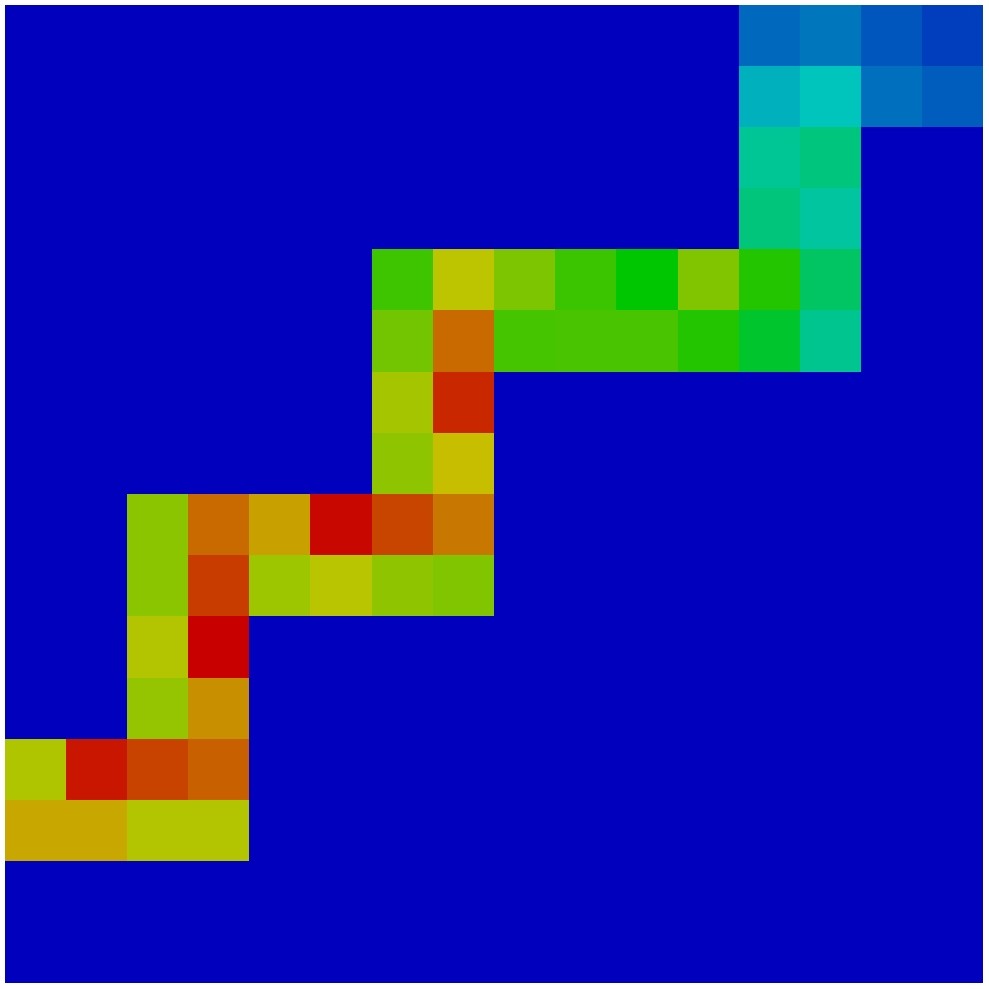}
\label{fig:concentration_channel_problem_cg-5}
}
%\hspace*{1mm}
\subfloat[PP(SD,$\vartheta$,L2), $k_s=10^{-5}$.]{
\includegraphics[width=0.27\textwidth]{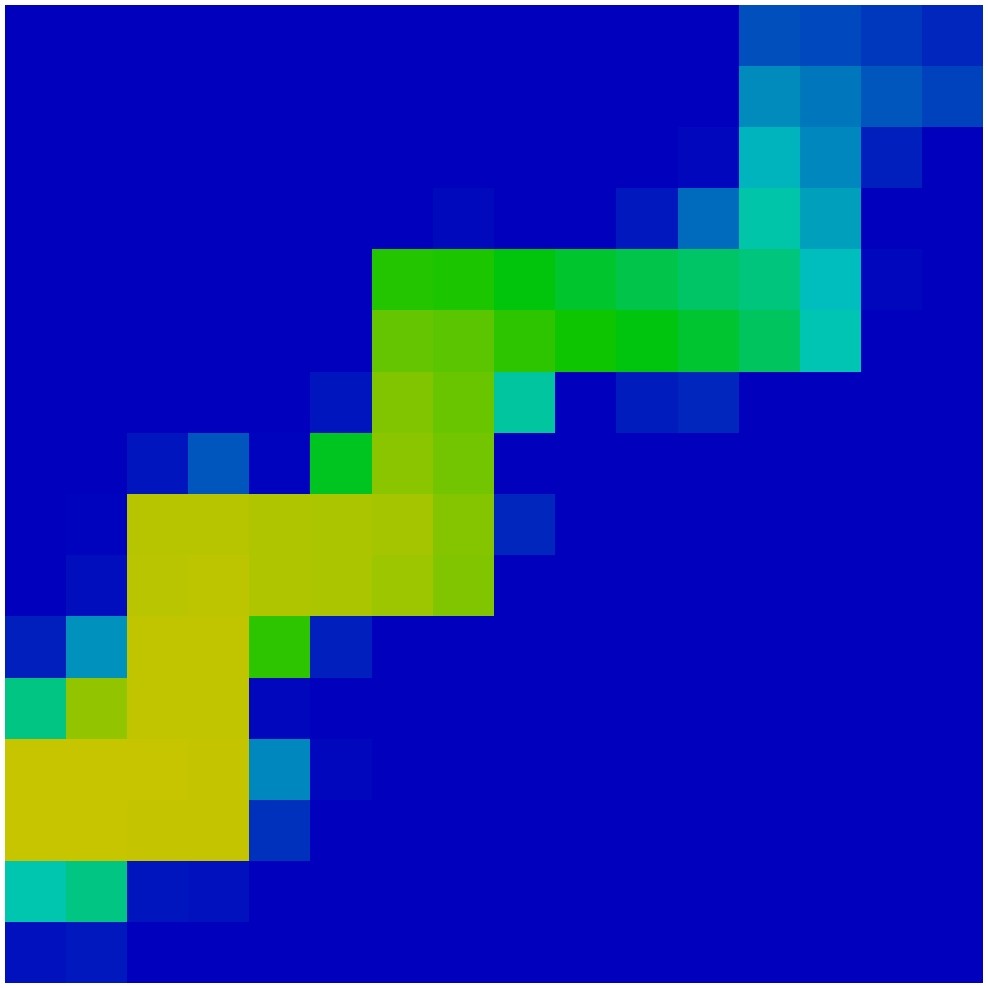}
\label{fig:concentration_channel_problem_pp-5}
}
%\hspace*{1mm}
\subfloat[PP(SD,$\vartheta$,wL2), $k_s=10^{-5}$.]{
\includegraphics[width=0.27\textwidth]{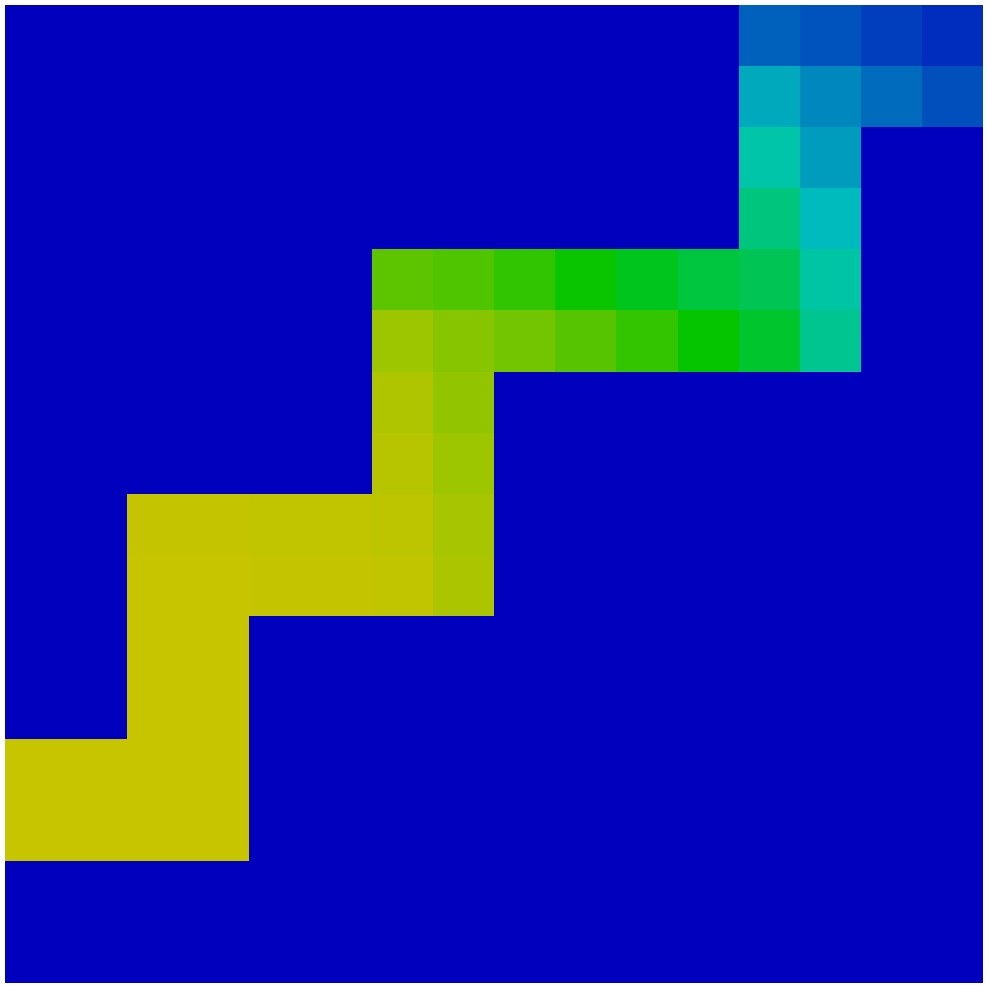}
\label{fig:concentration_channel_problem_ppw-5}
}
\subfloat{
\includegraphics[width=0.05\textwidth]{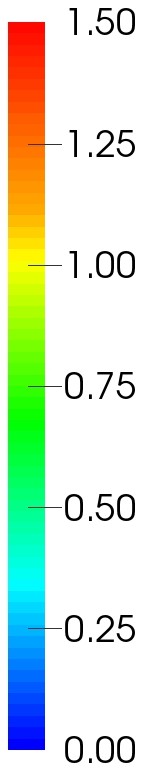}
} 
\caption{\textbf{Channel problem}. Concentration solutions at $t=2$ for different flux approximations (left to right) and different permeability outside channel (top and bottom). Harmonic averaging is used for calculations of the CG flux, $U_h$, in all cases.}
\label{fig:concentration_channel_problem}
\end{figure}

\begin{table}[bpt]
\setlength{\tabcolsep}{5pt}
\caption{\textbf{Channel problem}. Norm of residual, $\Vert \mathcal{R}(\cdot)\Vert_{\mcEh}$, overshoot, $\mathcal{O}(c_h)$, and minimum and maximum value of concentration solution at $t=2$ for different flux approximations.}
\label{tab:channel_overshoot}
\begin{center}
\footnotesize
\begin{tabular}{llrrrrr}
\hline
$k_s$ &
Method &
$\Vert \mathcal{R}(U_h)\Vert_{\mcEh}$ &
$\Vert \mathcal{R}(V_h)\Vert_{\mcEh}$ &
$\mathcal{O}(c_h)$ &
min($c_h$) &
max($c_h$) \\ 
\hline
1e-2 & CG(SD,$\vartheta$)     & 0.9646  & -       & 0.05715  & 0  & 1.478 \\
     & PP(SD,$\vartheta$,L2)  & -       & 3.6e-16 & 0        & 0  & 1.000 \\
     & PP(SD,$\vartheta$,wL2) & -       & 6.7e-16 & 0        & 0  & 1.000 \\
\hline
1e-5 & CG(SD,$\vartheta$)     & 0.9915  & -       & 0.06951  & 0 & 1.502 \\
     & PP(SD,$\vartheta$,L2)  & -       & 4.2e-15 & 0        & 0 & 1.000 \\
     & PP(SD,$\vartheta$,wL2) & -       & 6.7e-16 & 0        & 0 & 1.000 \\
\hline
\end{tabular}
\end{center}
\end{table}

\subsection{Well Pair Problem}

Next, we consider a simplified well scenario, and focus on objective (vii) for a problem with non-zero right hand side. Still, we let $\beta = 0$ and $\Omega=(0,1)^2$, but now $\mathbf{K} = k\mathbb{I}$, where $k=1$ if $x\le 0.5$ and $k=10^{-3}$ otherwise. Next, we model a injector/producer well pair by setting $q=100$ in the lower left corner and $q=-100$ in the upper right corner. See Fig.~\ref{fig:well_problem} for a sketch. The initial condition is $c_0= 0$ and the concentration of the injected fluid, $c_w=1.0$. We assume a pure Neumann boundary with $u_B= 0$. The coupled flow and transport problem is solved on quadratic grids with $h=\{1/16,1/32,1/64 \}$ and $\Delta t=0.01$. We only consider harmonic average in the calculations of the CG flux and use the weighted $L^2$ norm for minimization in the postprocessing method (CG(SD,$\vartheta$) and PP(SD,$\vartheta$,wL2)).

\begin{figure}[bpt]
\centering
\includegraphics[width=0.4\textwidth]{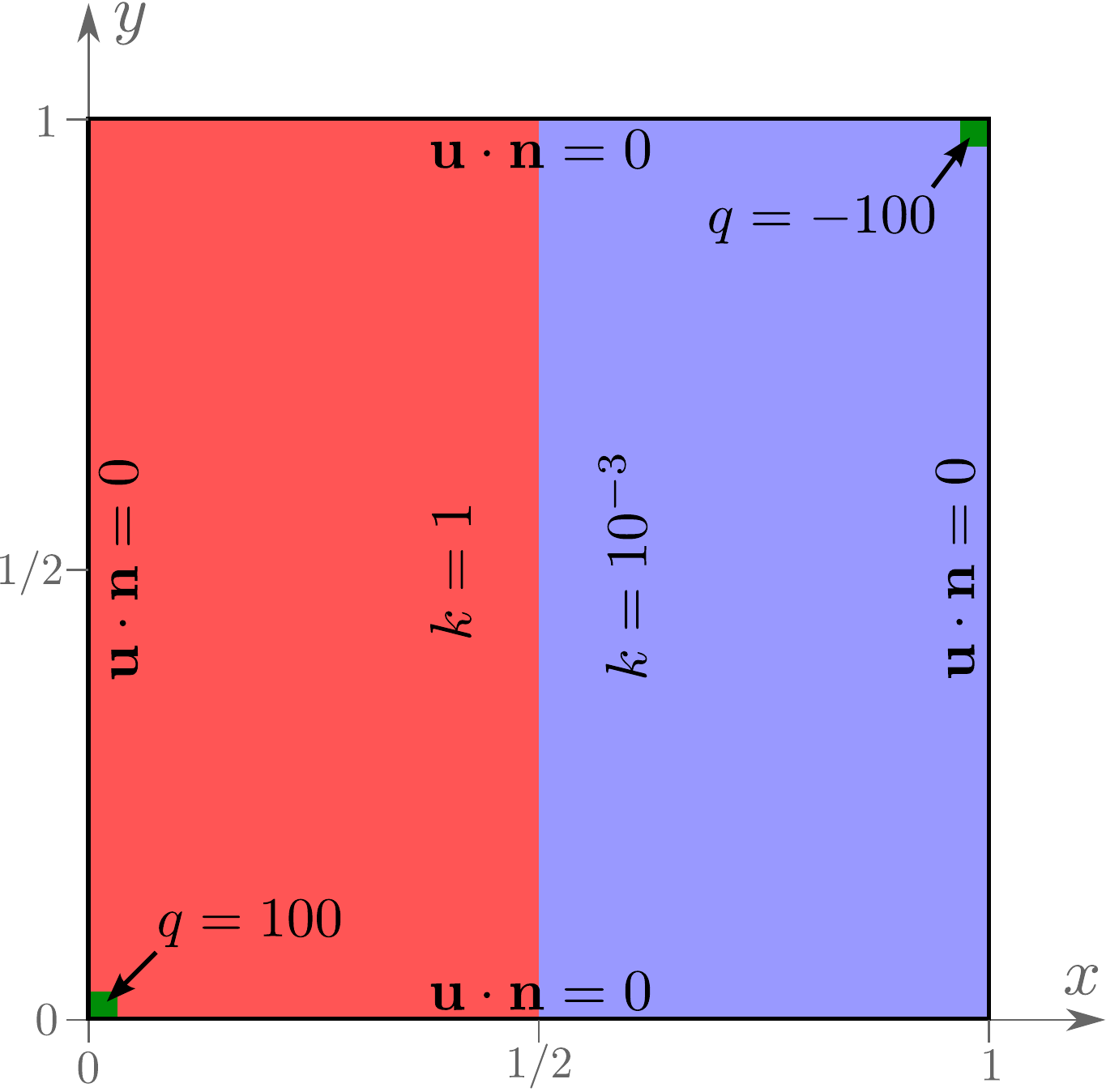}
\caption{\textbf{Well pair problem}. Problem definition. The green squares where $q\neq 0$ in the lower left and upper right corner have size $1/32\times 1/32$.}
\label{fig:well_problem}
\end{figure}

The concentration solution at different times for the grid with $h=1/32$ is shown in Fig.~\ref{fig:well_conc_time}. The concentration is produced in the lower left corner and moves towards the source in the upper right corner. The difference between $\text{CG}(\text{SD},\vartheta)$ and $\text{PP}(\text{SD},\vartheta,\text{wL2})$ is significant and the maximum principle $c_h\le 1$ is violated for $\text{CG}(\text{SD},\vartheta)$. Postprocessing is necessary to produce an acceptable concentration solution.

Similar results at $t=10$ for quadratic grids with $h=\{1/16,1/32,1/64 \}$ are shown in Fig.~\ref{fig:well_conc}. Furthermore, residuals, overshoot and minimum and maximum values are given in Table~\ref{tab:well_overshoot}. Evidently, the difference in concentration solution is smaller for smaller $h$. This is as expected since CG converges to the true solution, which is locally conservative. The area where $c_h>1$ seems to cluster around the sink and source for $h=1/64$. 

\begin{figure}[btp]
\centering
\subfloat[$\text{CG}(\text{SD},\vartheta),\ t=1$.]{
\includegraphics[height=0.25\textwidth]{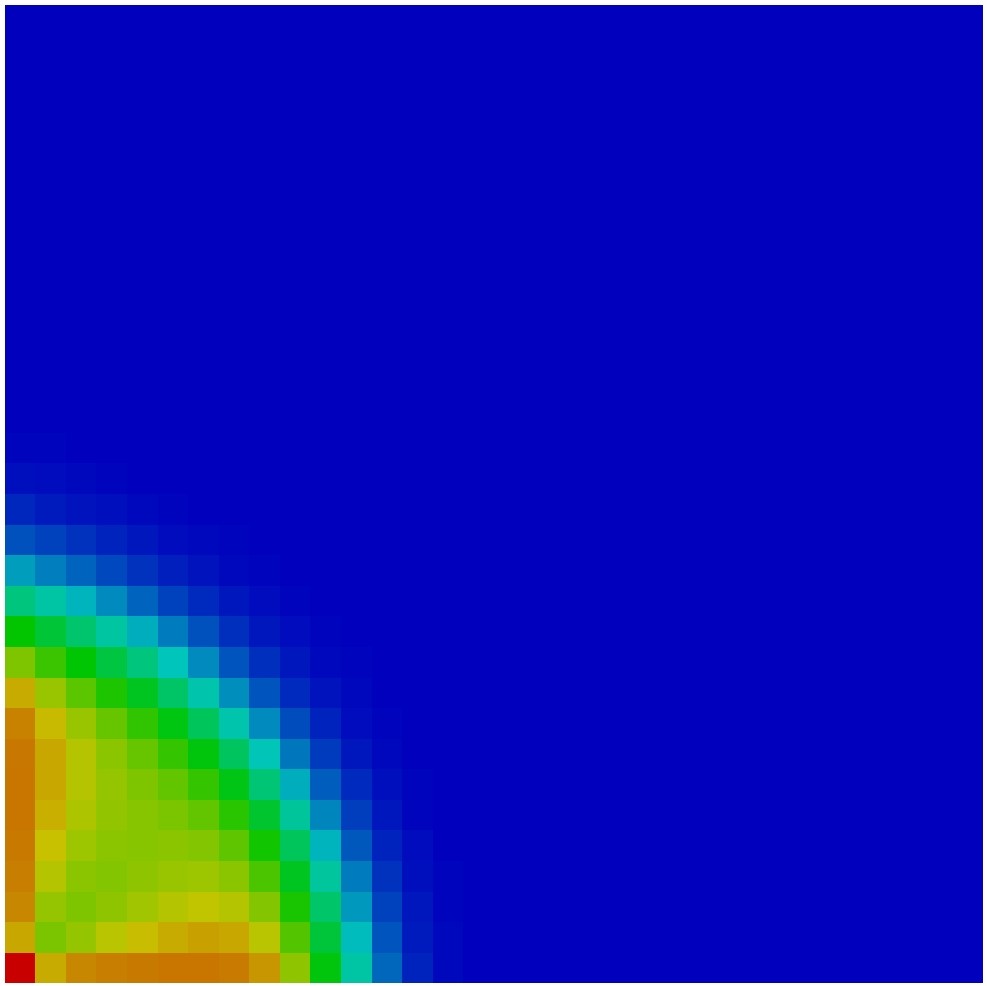}}
\subfloat[$\text{CG}(\text{SD},\vartheta),\ t=6$.]{
\includegraphics[height=0.25\textwidth]{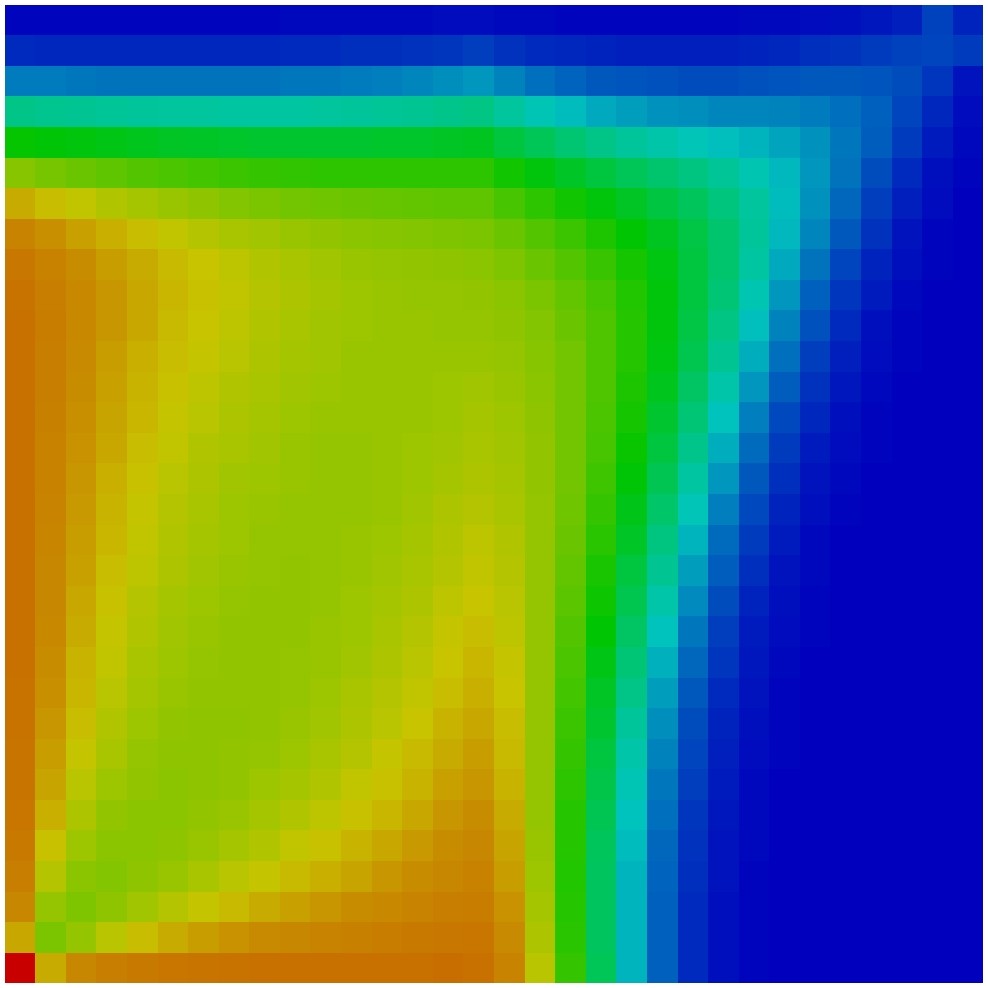}}
\subfloat[$\text{CG}(\text{SD},\vartheta),\ t=20$.]{
\includegraphics[height=0.25\textwidth]{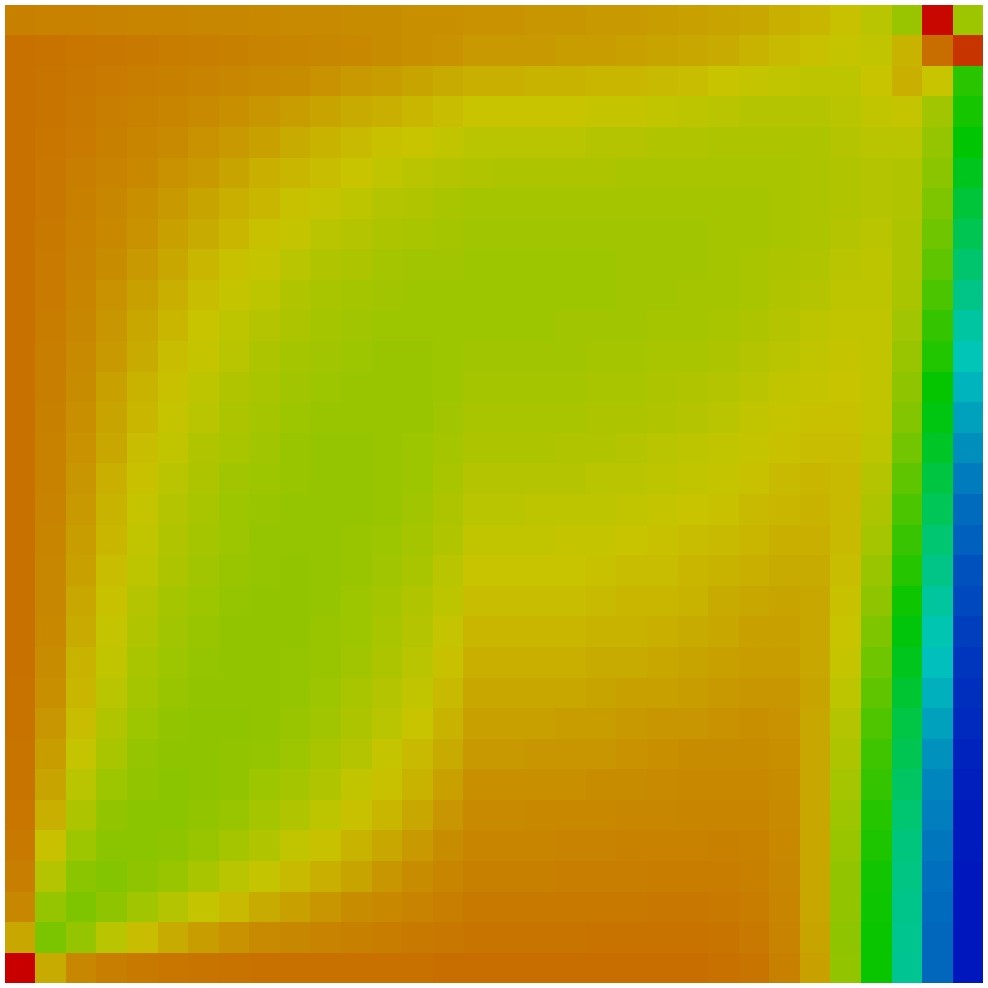}}
\subfloat{
\includegraphics[height=0.25\textwidth]{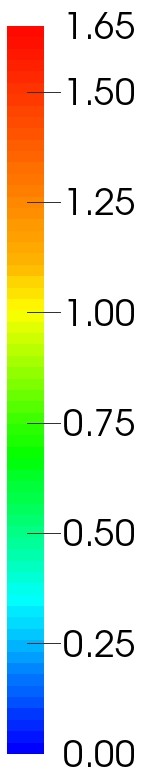}} \\
\addtocounter{subfigure}{-1}
\subfloat[$\text{PP}(\text{SD},\vartheta,\text{wL2}),\ t=1$.]{
\includegraphics[height=0.25\textwidth]{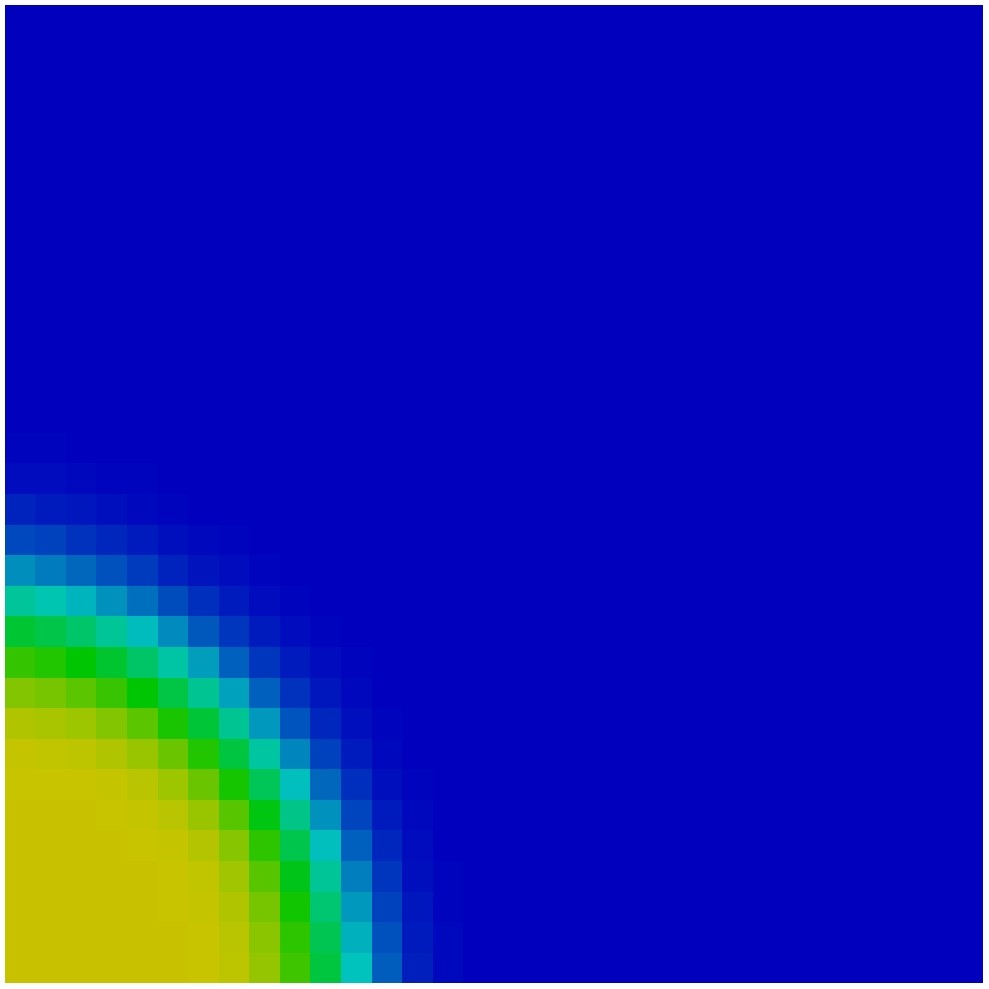}}
\subfloat[$\text{PP}(\text{SD},\vartheta,\text{wL2}),\ t=6$.]{
\includegraphics[height=0.25\textwidth]{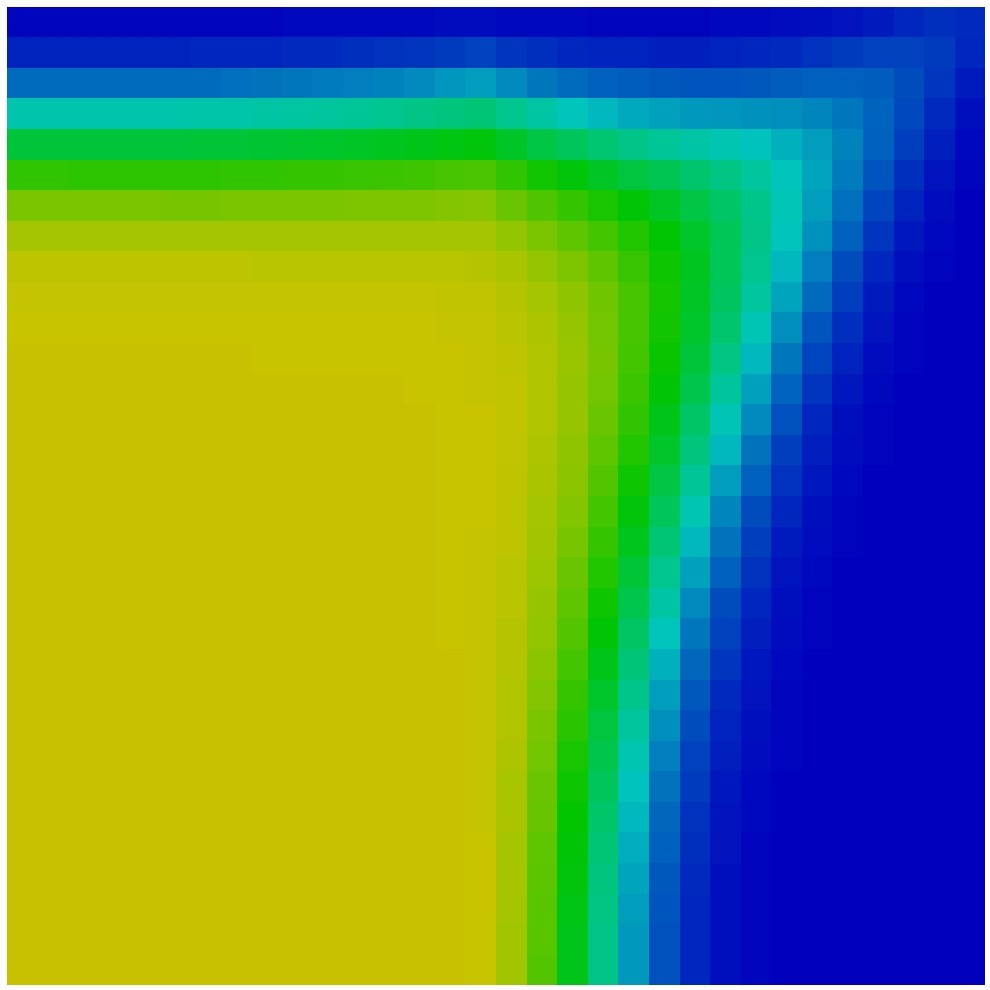}}
\subfloat[$\text{PP}(\text{SD},\vartheta,\text{wL2}),\ t=20$.]{
\includegraphics[height=0.25\textwidth]{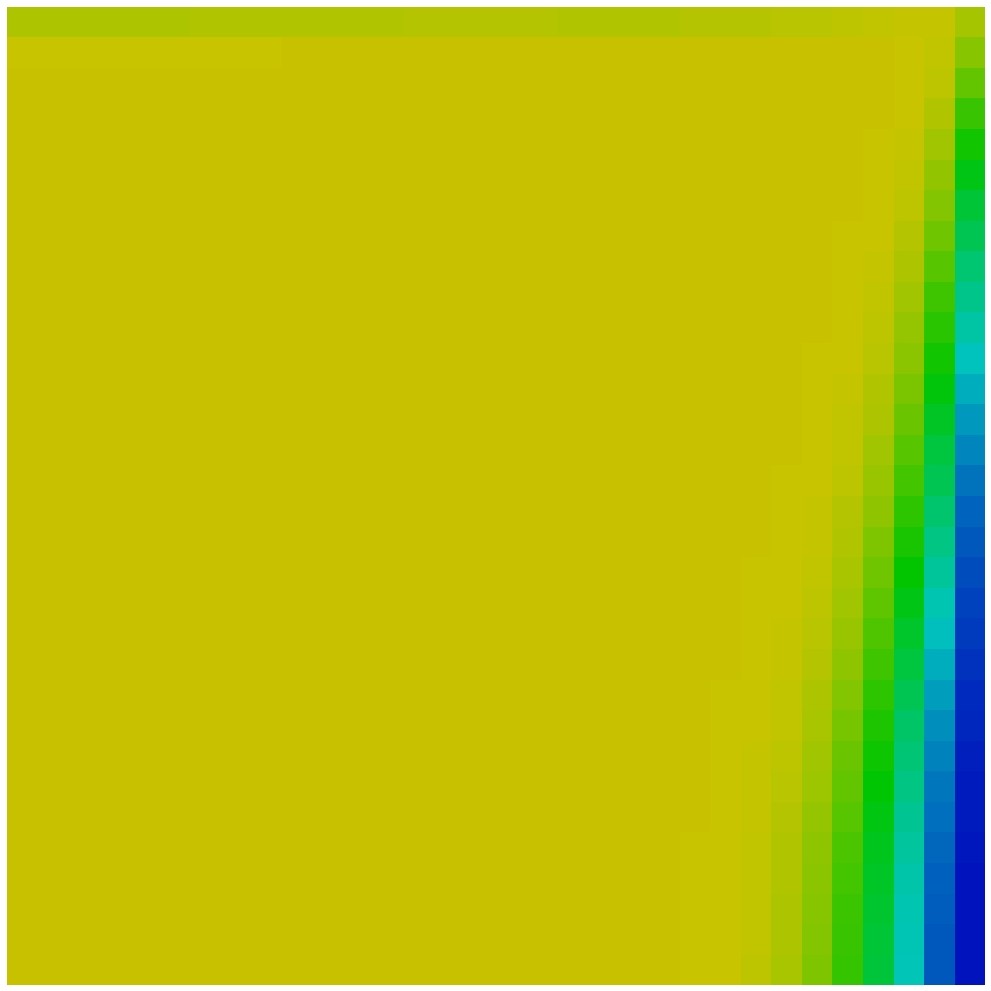}}
\subfloat{
\includegraphics[height=0.25\textwidth]{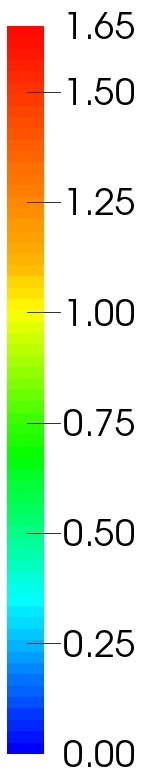}}
\caption{\textbf{Well pair problem}. Concentration solution with CG flux (top row), and postprosessed flux (bottom row) at different times (left to right) on a quadratic grid with $h=1/32$.}
\label{fig:well_conc_time}
\end{figure}

\begin{figure}[bpt]
\centering
\subfloat[CG(SD,$\vartheta$), $h=1/16$.]{
\includegraphics[height=0.25\textwidth]{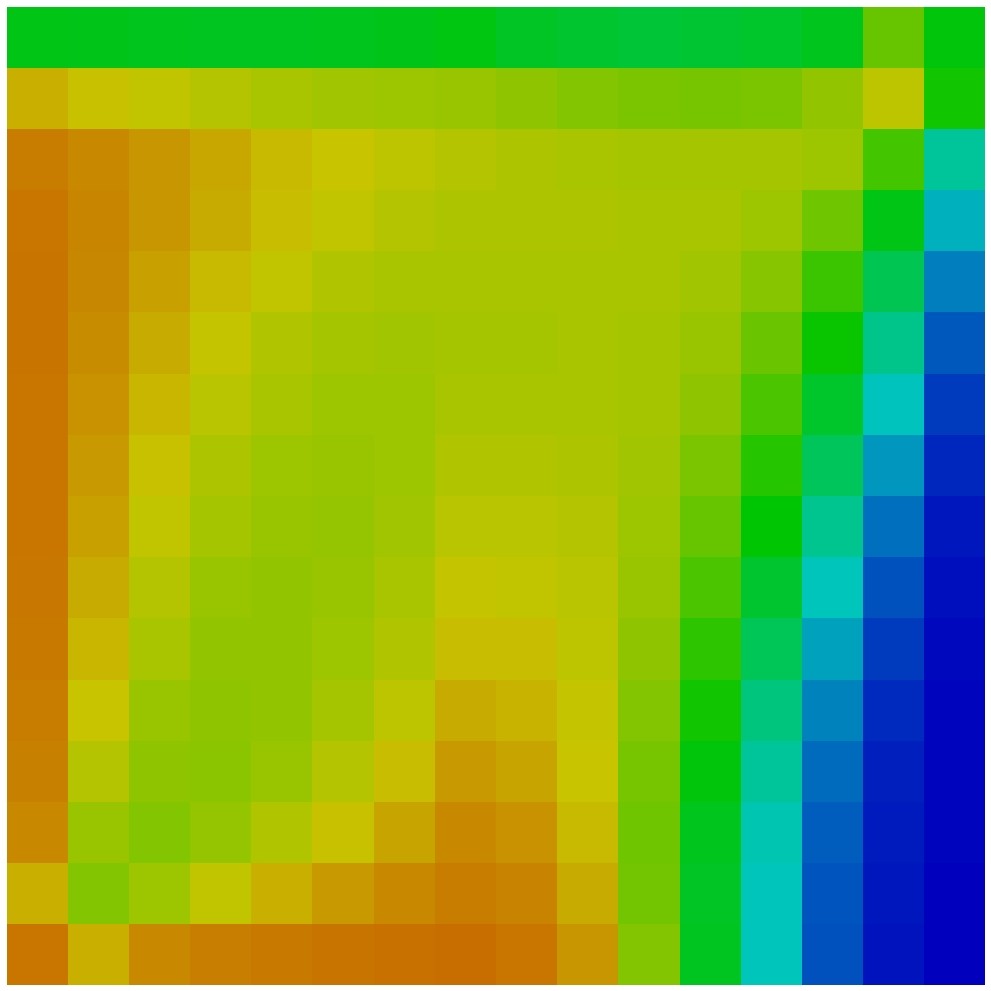}}
\subfloat[CG(SD,$\vartheta$), $h=1/32$.]{
\includegraphics[height=0.25\textwidth]{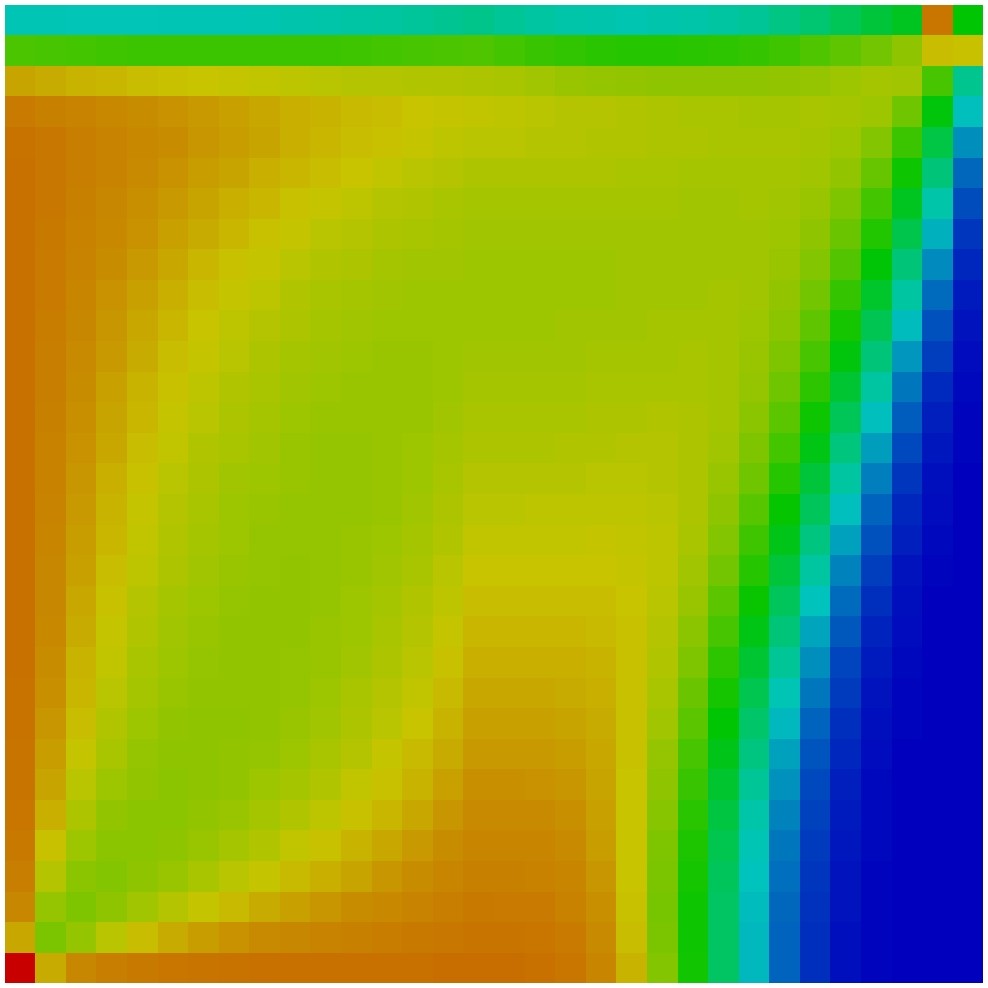}}
\subfloat[CG(SD,$\vartheta$), $h=1/64$.]{
\includegraphics[height=0.25\textwidth]{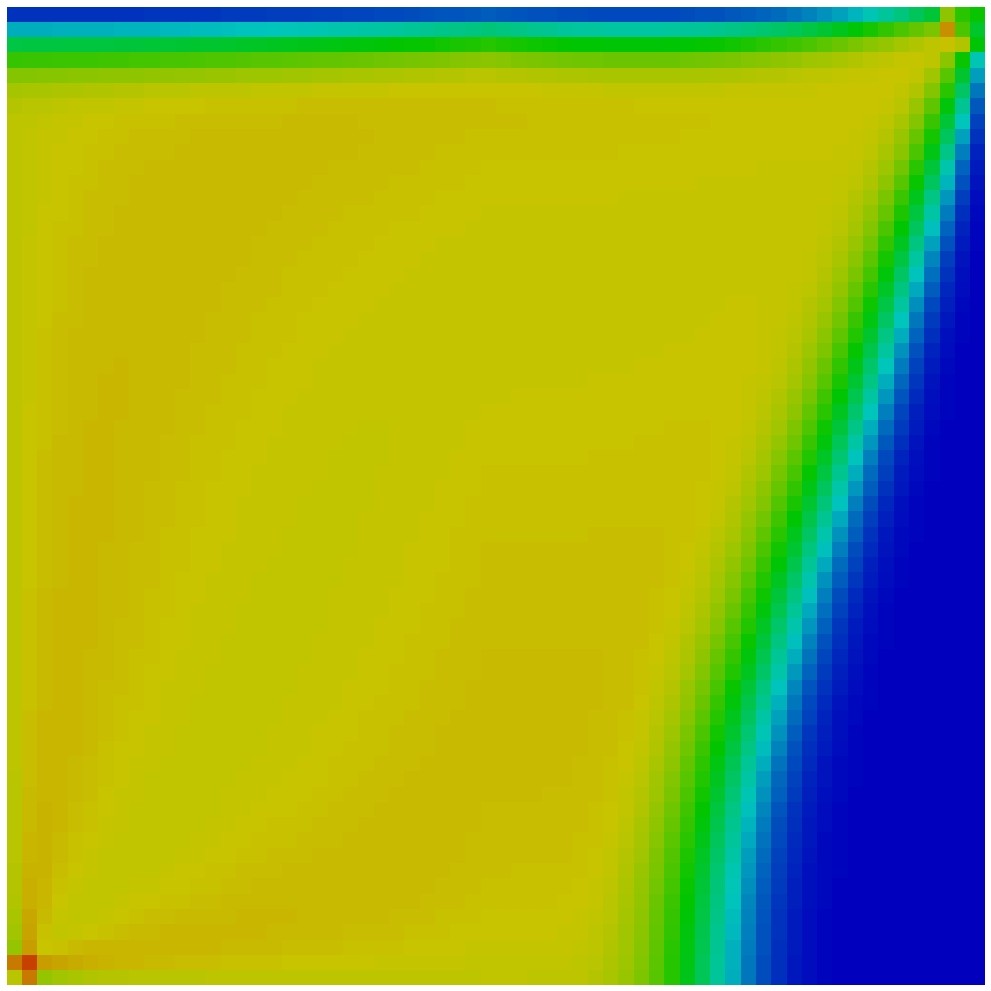}}
\subfloat{
\includegraphics[height=0.25\textwidth]{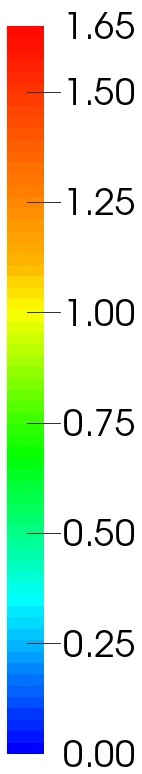}} \\
\addtocounter{subfigure}{-1}
\subfloat[PP(SD,$\vartheta$,wL2), $h=1/16$.]{
\includegraphics[height=0.25\textwidth]{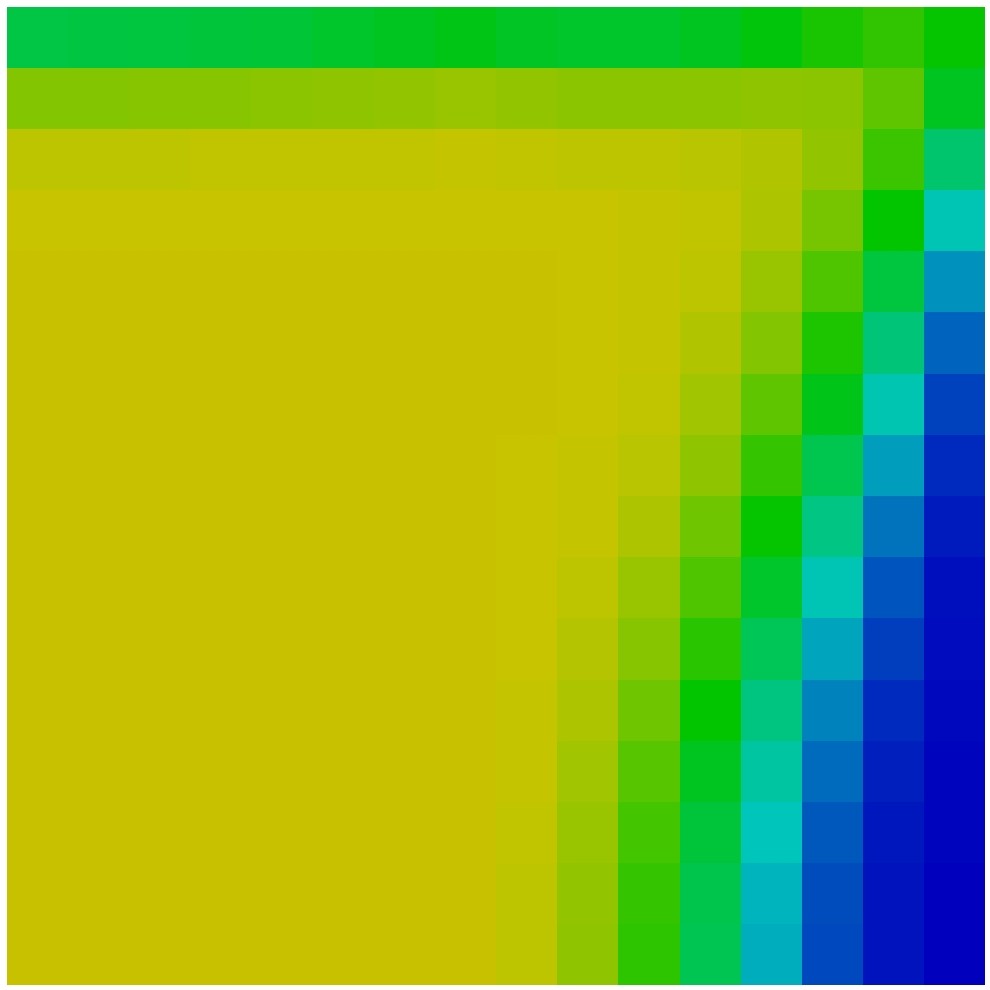}}
\subfloat[PP(SD,$\vartheta$,wL2), $h=1/32$.]{
\includegraphics[height=0.25\textwidth]{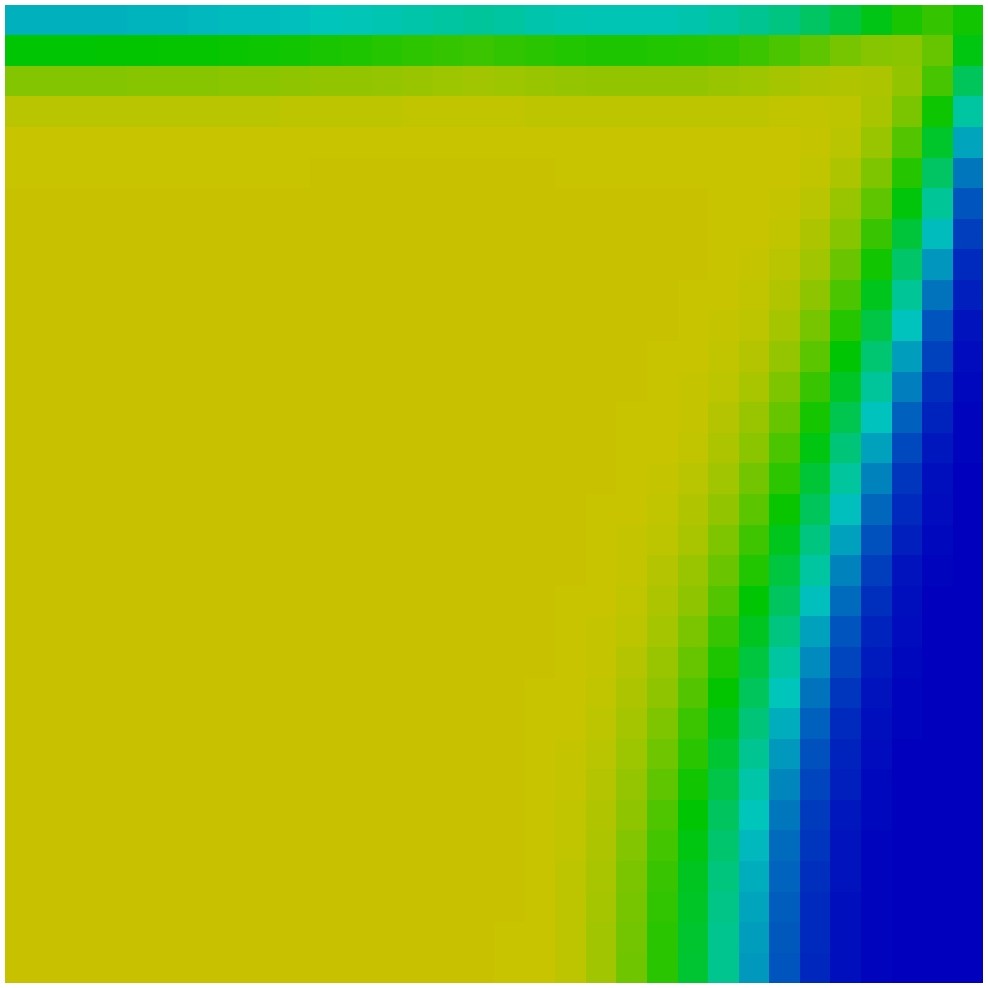}}
\subfloat[PP(SD,$\vartheta$,wL2), $h=1/64$.]{
\includegraphics[height=0.25\textwidth]{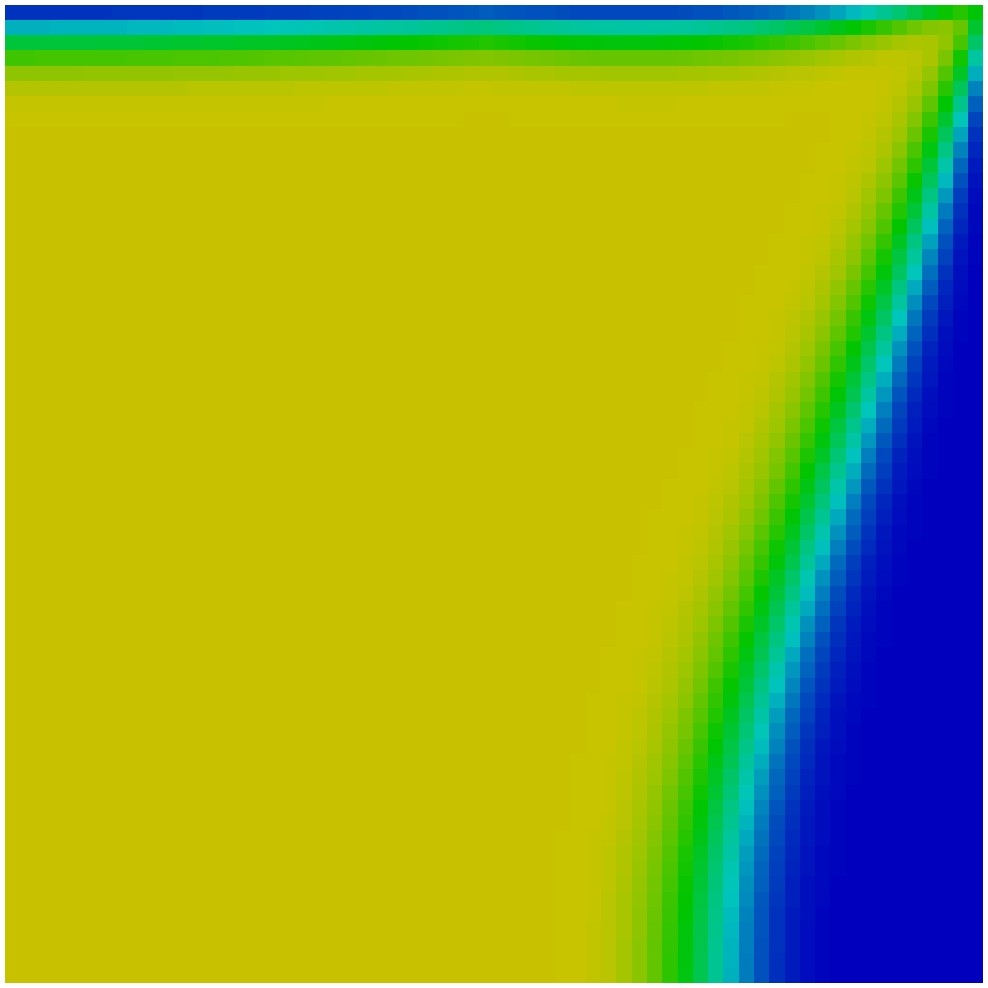}}
\subfloat{
\includegraphics[height=0.25\textwidth]{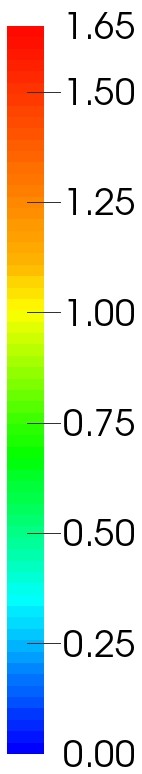}}
\caption{\textbf{Well pair problem}. Concentration solution without (top row) and with (bottom row) postprocessing at $t=10$ on quadratic grids with different $h$.}
\label{fig:well_conc}
\end{figure}

\begin{table}[bpt]
\setlength{\tabcolsep}{5pt}
\caption{\textbf{Well pair problem}. Norm of residual, $\Vert \mathcal{R}(\cdot)\Vert_{\mcEh}$, overshoot, $\mathcal{O}(c_h)$, and minimum and maximum value of concentration solution at $t=10$ for different flux approximations.}
\label{tab:well_overshoot}
\begin{center}
\footnotesize
\begin{tabular}{llrrrrr}
\hline
$h$ &
Method &
$\Vert \mathcal{R}(U_h)\Vert_{\mcEh}$ &
$\Vert \mathcal{R}(V_h)\Vert_{\mcEh}$ &
$\mathcal{O}(c_h)$ &
min($c_h$) &
max($c_h$) \\ 
\hline
1/16 & CG(SD,$\vartheta$)     & 0.3162  & -       & 0.0558  & 0.00508  & 1.217 \\
     & PP(SD,$\vartheta$,wL2) &  -      & 1.6e-16 & 0       & 0.00477  & 1.000 \\
\hline
1/32 & CG(SD,$\vartheta$)     & 2.0928  & -       & 0.0616  & 2.3e-5  & 1.652 \\
     & PP(SD,$\vartheta$,wL2) & -       & 1.4e-15 & 1.4e-17 & 2.0e-5  & 1.000 \\
\hline
1/64 & CG(SD,$\vartheta$)     & 1.5247  & -       & 0.0102  & 3.9e-10 & 1.399 \\
     & PP(SD,$\vartheta$,wL2) & -       & 1.6e-15 & 2.8e-15 & 4.2e-10 & 1.000 \\
\hline
\end{tabular}
\end{center}
\end{table}

A quantity of interest for such well problem is the production rate at the producer,
\begin{align}
\text{PR}(t) &= \frac{1}{\Delta t}\int_{t-\Delta t}^t \int_{\Omega_w} qc,
\end{align}
where $\Omega_w$ is the sink part of $\Omega$, i.e., $\Omega_w=\{\mathbf{x}\in\Omega : q(\mathbf{x})<0\}$. For this example $\Omega_w =\left[\frac{31}{32},1\right]^2$. The production rate is plotted against time for different $h$ in Fig.~\ref{fig:well_production}, where a reference curve from a simulation with $h=1/256$ is included. Although not prominent, we see that we get different curves whether we use CG flux or postprocessed flux, and that this effect is largest for the coarsest grid. We get a earlier breakthrough (smallest $t$ where $\text{PR}(t)>0$) for larger $h$. This is due to numerical dispersion.

\begin{figure}[bpt]
\centering
\includegraphics[width=0.8\textwidth]{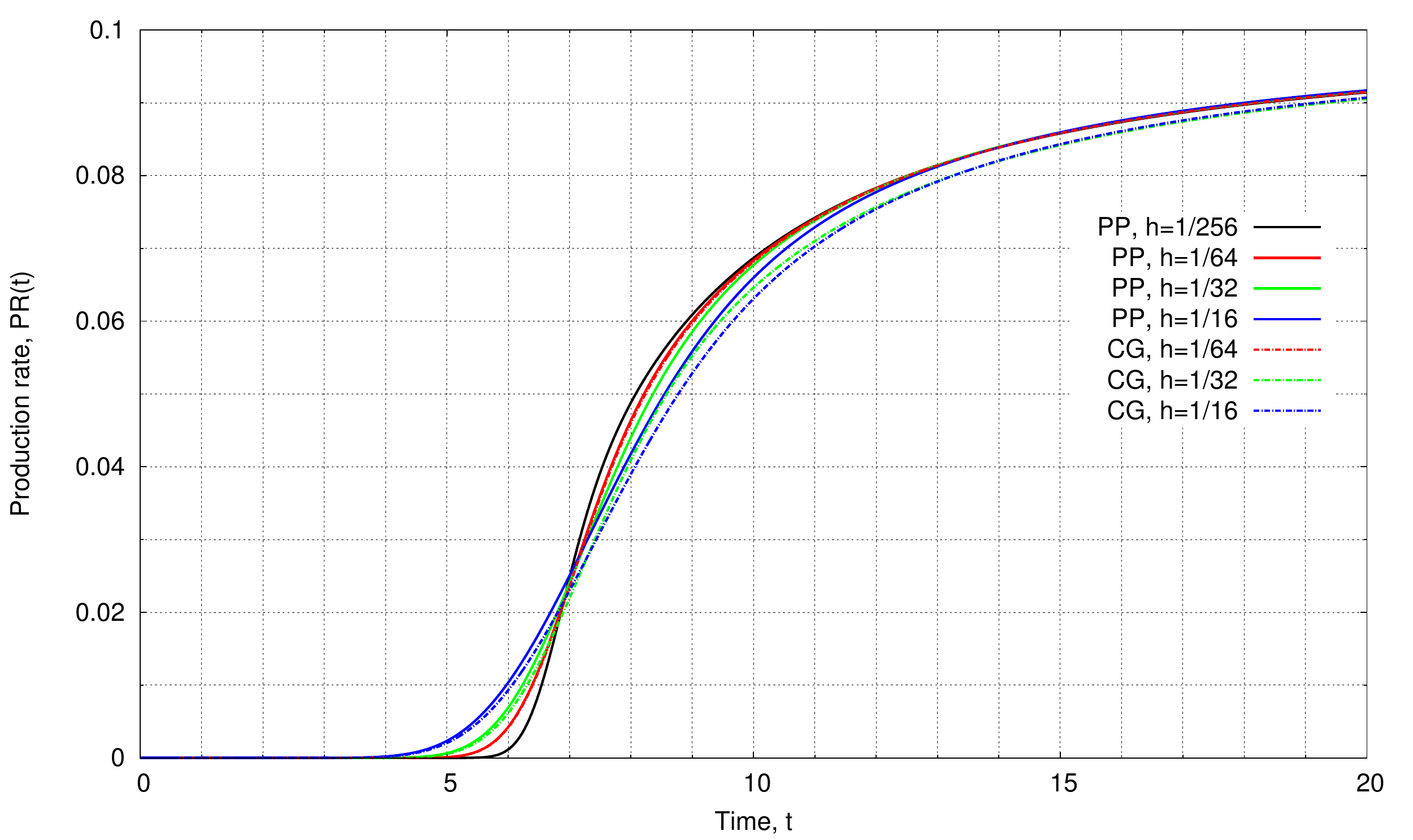}
\caption{\textbf{Well pair problem}. Production rate, $\text{PR}(t)$, for different $h$ and flux.}
\label{fig:well_production}
\end{figure}

\subsection{SPE-10 Model}

Our last example is based on the SPE-10 model \cite{christie2001tenth}, and serves as a test problem to verify objective (i), (v), (vi) and (vii) for a realistic 3D model. The SPE-10 model was originally introduced as a benchmark problem for upscaling, but it has also been used in many studies addressing other aspects of flow in porous media. We consider the top 35 layers of the original model, representing the Tarbert formation, see Fig.~\ref{fig:spe10}. This model is given on a Cartesian mesh with 462000 regular hexahedral elements. The permeability is cellwise constant and anisotropic such that the permeability tensor can be written as a diagonal tensor with entries $k_x,k_y,k_z$ ($k_x=k_y$). Observe from Fig.~\ref{fig:spe10} that the model is highly heterogeneous. To work with realistic data, we will set the fluid viscosity to $\mu=10^{-3}\,\text{Pa}\cdot\text{s}$, in contrast to the rest of this work.
We consider incompressible flow with no source ($\beta=0, q=0$). As boundary conditions, we set $p=10^9\,$Pa on the left boundary, $p=0$ and the right boundary, and no-flow conditions ($\bfu\cdot\bfn=0$) elsewhere. 
Regarding linear solver, we use the preconditioned conjugate gradient method with a general algebraic multigrid preconditioner (AMG) available through the Trilinos Project \cite{trilinos}.

Table \ref{tab:spe10_timing} report on the degrees of freedom (DoF), number of iterations (it), the CPU time used by the linear solver (time) and the norm of the residual, both for the CG problem and the postprocessing problem with and without the weighted norm. First observe that the residual is non-zero for the CG flux, and zero (below solver tolerance) for the postprocessed fluxes. Hence, our methods and implementations work also for this realistic 3D problem.
Furthermore, we see that the computational complexity of PP(SD,$\vartheta$,L2) is lower than PP(SD,$\vartheta$,wL2). This means that minimization in the weighted norm leads to worse conditioning of the system matrix. The time spent to solve PP(SD,$\vartheta$,L2) and PP(SD,$\vartheta$,wL2) compared to CG(SD,$\vartheta$) is about 9\% and 30\%, respectively.

To check the influence of the anisotropic permeability on the linear solver time, we run the same case but with isotropic permeability such that $k_z=k_x(=k_y)$. For this scenario the CPU time used by the linear solver was 20.34, 3.20 and 3.50 for CG(SD,$\vartheta$), PP(SD,$\vartheta$,L2) and PP(SD,$\vartheta$,wL2), respectively. Comparing with the anisotropic case (Table \ref{tab:spe10_timing}), we observe that anisotropic permeability leads to worse conditioning for CG(SD,$\vartheta$) and PP(SD,$\vartheta$,wL2). The run time for PP(SD,$\vartheta$,L2) is unchanged since the system matrix is independent on the permeability. With isotropic permeability, the linear solver time for PP(SD,$\vartheta$,wL2) is about 17\% of that of CG(SD,$\vartheta$).

\begin{figure}[bpt]
\centering
\subfloat[Porosity.]{
\includegraphics[width=0.7\textwidth]{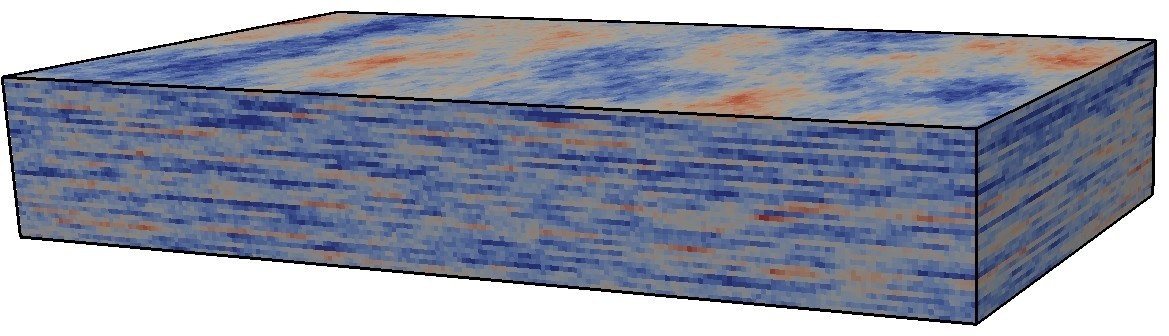}
\includegraphics[height=0.15\textwidth]{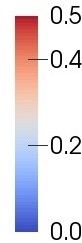}
} \\
\subfloat[Horizontal permeability ($k_x=k_y$) in milli Darcy ($1\,$mD = $9.87\cdot 10^{-16}\,\text{m}^2$) on a logarithmic scale.]{
\includegraphics[width=0.7\textwidth]{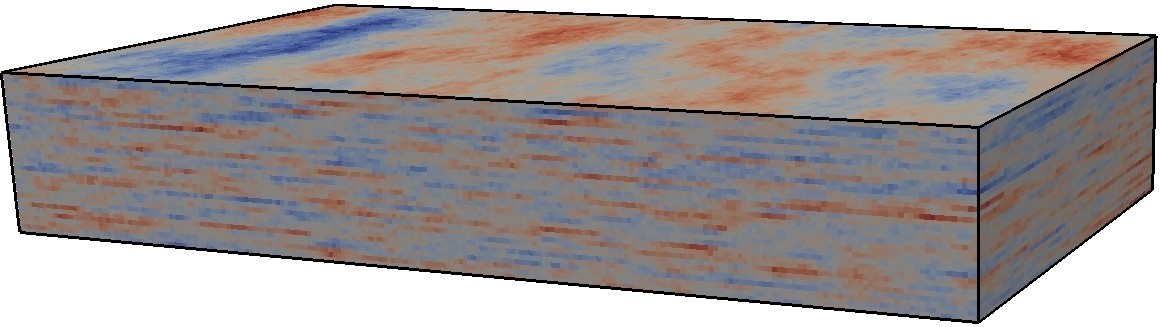}
\hspace*{1mm}
\includegraphics[height=0.16\textwidth]{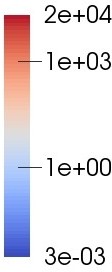}
}
\caption{\textbf{SPE-10 model}. Highly heterogeneous model given on a Cartesian mesh with $220\times60\times85=462000$ regular hexahedral elements, each of size $10\times20\times2$ feet. The model dimensions are $2200\times1200\times170$ feet (these figures are scaled by a factor 5 in the vertical direction). }
\label{fig:spe10}
\end{figure}

\begin{table}[bpt]
\caption{\textbf{SPE-10 model}. Computational complexity for different problems; DoF: Degrees of Freedom, it: number of iterations in linear solver, time: CPU time used by the linear solver including initialization of the preconditioner (median value over 11 runs). The linear solver is the conjugate gradient method with an AMG preconditioner with residual tolerance $10^{-6}$.}
\label{tab:spe10_timing}
\centering
\footnotesize
\begin{tabular}{lrrrr}
\hline
Problem                & DoF    & it     & time    & $\Vert \mathcal{R}\Vert_{\mcEh}$ \\
\hline
CG(SD,$\vartheta$)     & 485316 & 105    & 33.58   & 2.5e-2   \\
PP(SD,$\vartheta$,L2)  & 462000 & 10     & 3.14    & 2.0e-8   \\ 
PP(SD,$\vartheta$,wL2) & 462000 & 55     & 9.97    & 4.3e-8   \\ \hline
\end{tabular}
\end{table}

For the anisotropic case, we also consider the transport problem. We let $c_B=1.0$ on the inflow boundary ($x=0$) and use time steps $\Delta t = 10^4\,\text{s}$. The concentration solutions with PP(SD,$\vartheta$,L2) and PP(SD,$\vartheta$,wL2) are shown in Fig.~\ref{fig:spe10_conc_PP-L2} and \ref{fig:spe10_conc_PP-wL2}, respectively. Both solutions obey the maximum principle, but we see that without weighting (Fig.~\ref{fig:spe10_conc_PP-L2}) the vertical flow between layers with high permeabilty contrast is higher. Hence, the application of the weighted norm seems to better preserve low permeable interfaces.
We do not display similar results for CG(SD,$\vartheta$) because we get a totally unphysical solution. Instead, Fig.~\ref{fig:spe10_conc_CG}, shows the time evolution of $\text{max}(c_h)$ and $\mathcal{O}(c_h)$ with CG(SD,$\vartheta$). Clearly, the maximum principle is far from satisfied.

\begin{figure}[bpt]
\centering
\subfloat[Time step 300, $t=3\cdot 10^6\,\text{s} \approx 35\,\text{days}$.]
{\includegraphics[width=0.45\textwidth]{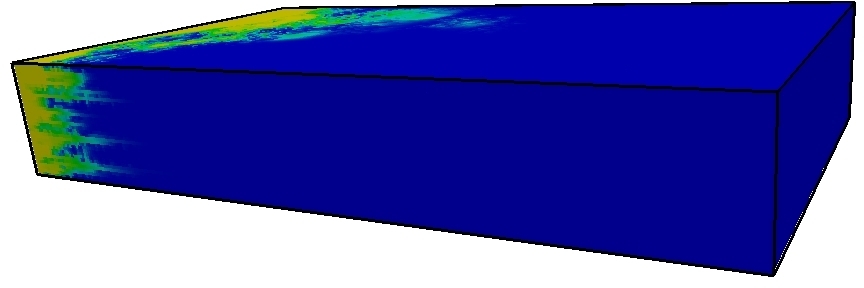}} \hspace{3mm}
\subfloat[Time step 1000, $t=1\cdot 10^7\,\text{s} \approx 116\,\text{days}$.]
{\includegraphics[width=0.45\textwidth]{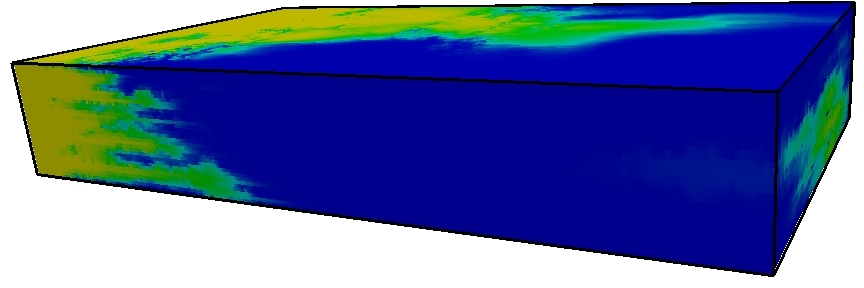}} \hspace{3mm}
\subfloat
{\hspace{0.035\textwidth}} \\
\addtocounter{subfigure}{-1}
\subfloat[Time step 3000, $t=3\cdot 10^7\,\text{s} \approx 347\,\text{days}$.]
{\includegraphics[width=0.45\textwidth]{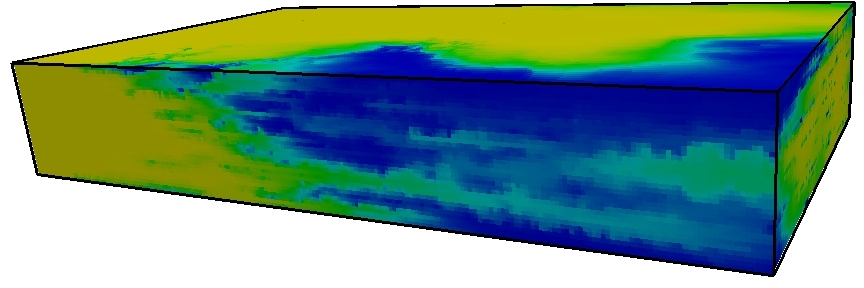}} \hspace{3mm}
\subfloat[Time step 6000, $t=6\cdot 10^7\,\text{s} \approx 694\,\text{days}$.]
{\includegraphics[width=0.45\textwidth]{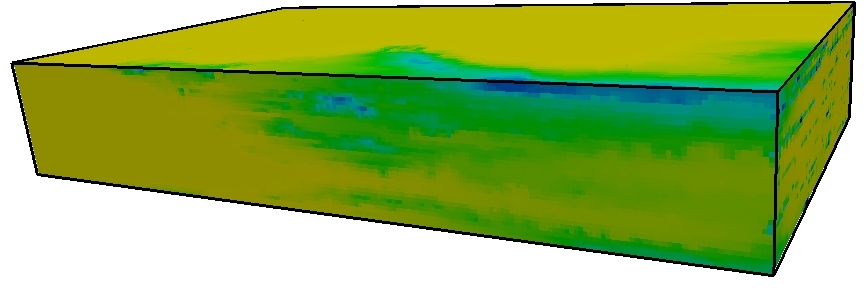}} \hspace{3mm}
\subfloat
{\includegraphics[width=0.04\textwidth]{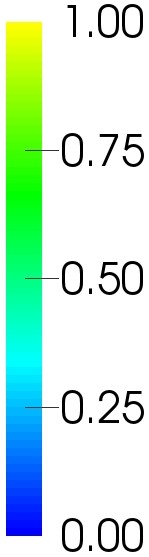}}
\caption{\textbf{SPE-10 model}. Concentration solution with postprocessed flux without weighting, PP(SD,$\vartheta$,L2).}
\label{fig:spe10_conc_PP-L2}
\end{figure}

\begin{figure}[bpt]
	\centering
	\subfloat[Time step 300, $t=3\cdot 10^6\,\text{s} \approx 35\,\text{days}$.]
	{\includegraphics[width=0.45\textwidth]{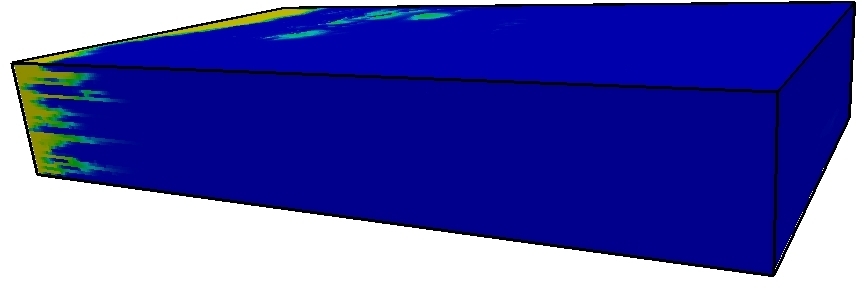}} \hspace{3mm}
	\subfloat[Time step 1000, $t=1\cdot 10^7\,\text{s} \approx 116\,\text{days}$.]
	{\includegraphics[width=0.45\textwidth]{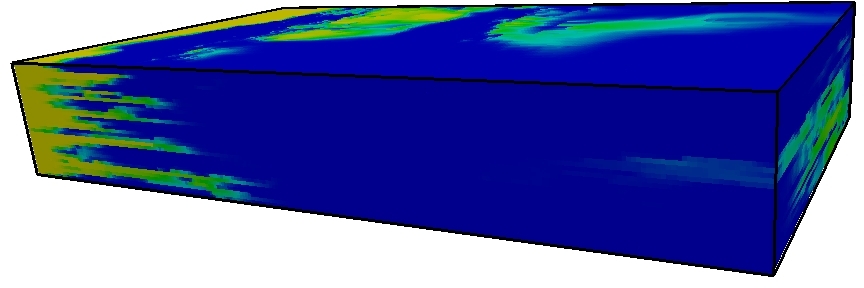}} \hspace{3mm}
	\subfloat
	{\hspace{0.035\textwidth}} \\
	\addtocounter{subfigure}{-1}
	\subfloat[Time step 3000, $t=3\cdot 10^7\,\text{s} \approx 347\,\text{days}$.]
	{\includegraphics[width=0.45\textwidth]{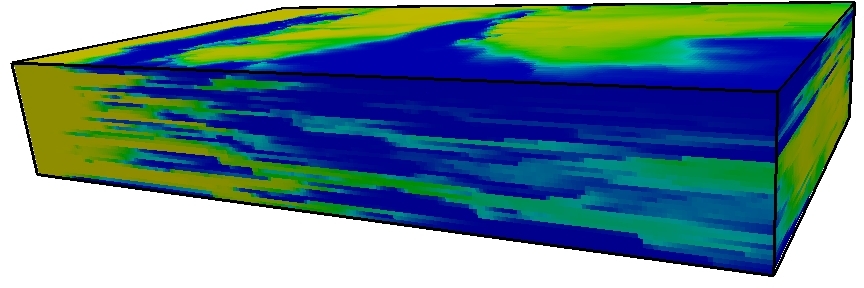}} \hspace{3mm}
	\subfloat[Time step 6000, $t=6\cdot 10^7\,\text{s} \approx 694\,\text{days}$.]
	{\includegraphics[width=0.45\textwidth]{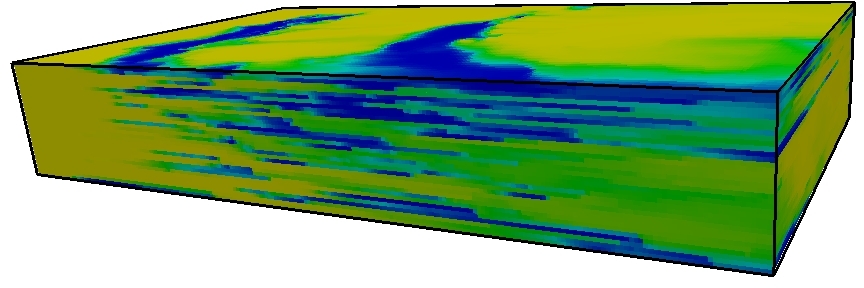}} \hspace{3mm}
	\subfloat
	{\includegraphics[width=0.04\textwidth]{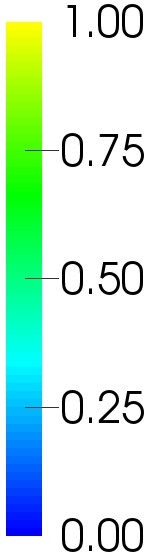}}
	\caption{\textbf{SPE-10 model}. Concentration solution with postprocessed flux with weighting, PP(SD,$\vartheta$,wL2).}
	\label{fig:spe10_conc_PP-wL2}
\end{figure}

\begin{figure}[bpt]
\centering
\includegraphics[width=0.8\textwidth]{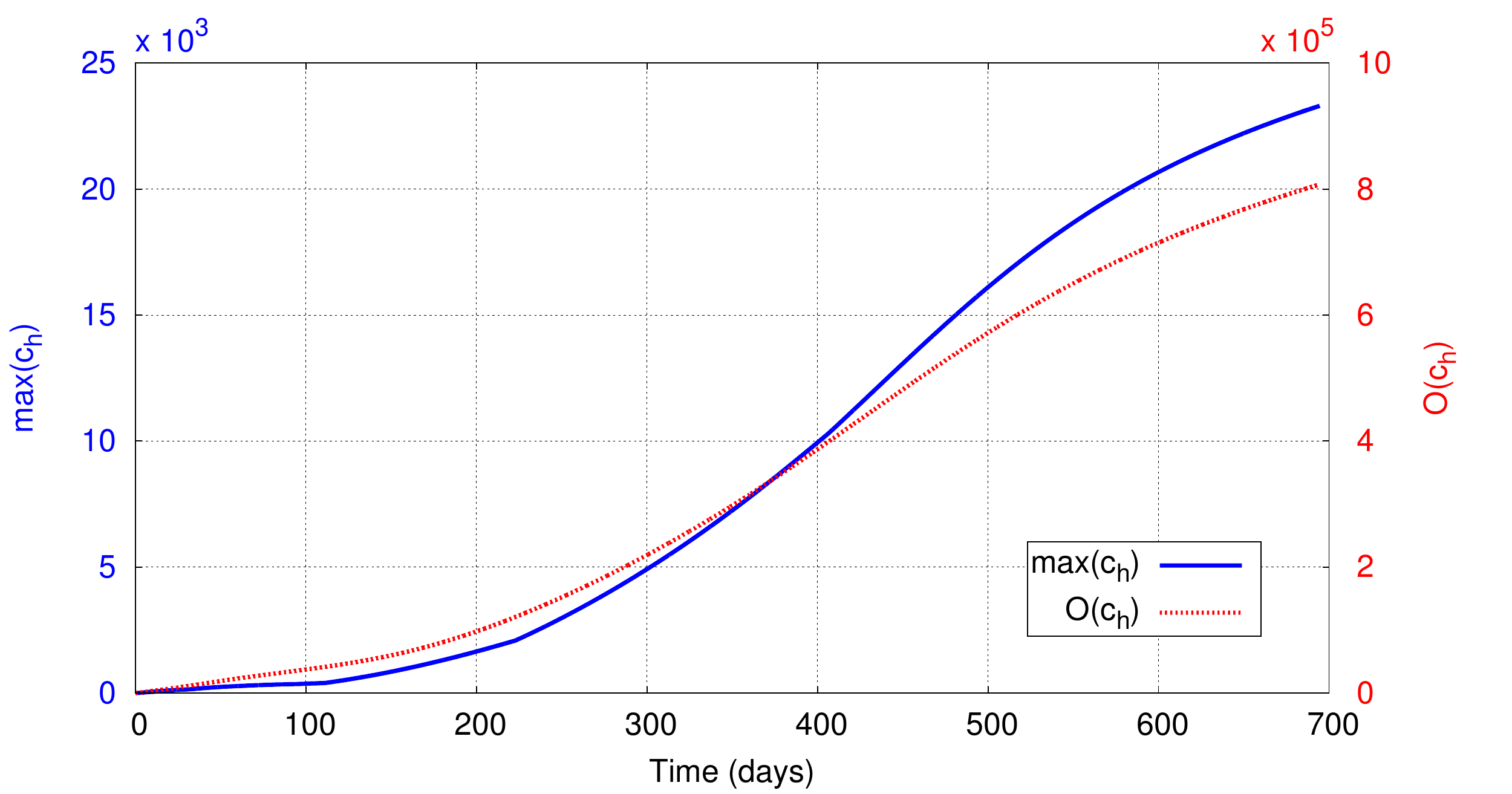}
\caption{\textbf{SPE-10 model}. Maximal concentration, $\text{max}(c_h)$, and overshoot, $\mathcal{O}(c_h)$, for concentration solution with CG(SD,$\vartheta$). For the reference, we have $\max(c_h) = 1.00006$ and $\mathcal{O}(c_h)=0.019$ at $t=694\,\text{days}$ with PP(SD, $\vartheta$,wL2).}
\label{fig:spe10_conc_CG}
\end{figure}

\section{Conclusions}
\label{sec:conclusion}

Eq.~\eqref{eq:pp_def}, p.\ \pageref{eq:pp_def}, defines a general purpose postprocessing method, where a minimal piecewise constant correction term is added to the flux. Local conservation, uniqueness and preservation of convergence order is proven and summarized in Theorem \ref{theorem}, p.\ \pageref{theorem}. Our method applies to any flux approximation in $L^1(\mcFh)$ and for a wide range of grids, including non-conforming and unstructured grids. It can also be used for the time dependent flow model. 

Through a series of numerical examples, we have demonstrated that our method produces locally conservative flux. It is verified numerically that the postprocessed flux has the same order of convergence as the original flux. Moreover, our numerical examples clearly demonstrates the importance of locally conservative flux when coupling with a DG solver for the transport equation. Lack of local conservation may produce unphysical solutions.

The postprocessing algorithm is global in the sense that a system of $N$ linear equations has to be solved, where $N$ is the number of elements (or cells). However, the system matrix is symmetric and sparse and only dependent on the permeability (through the weights) and the grid. If the grid is constant or only altered occasionally, we can allow for a preconditioner that is relatively costly to initialize. 

For flux approximations from CG, where the pressure gradient is discontinuous across element faces, it is favorable to use harmonic averaging to calculate the flux. A novelty of this work compared to \cite{larson2004conservative} and \cite{sun2006projections} is that we minimize the correction term in a weighted $L^2$ norm with weights equal to the inverse of the effective face permeability. This better preserves low permeable interfaces, and numerical examples  demonstrate that no weighting (standard $L^2$ norm) tends to weaken the effect of harmonic averaging. 

The computational complexity of solving the linear system associated with the postprocessing step compared to that of solving the linear system for the CG problem was measured. For the synthetic 2D barrier problem, the additional cost was significant ($\sim 60\%$). However, for the larger 3D SPE-10 model, the additional cost was smaller, $10$--$30\%$, depending on anisotropy and choice of weights. This indicates that the postprocessing method is reasonable also in terms of computational efficiency. The difference in computational complexity of applying the weighted norm was small for isotropic permeability as long as an appropriate preconditioner, such as SSOR or AMG, was used. For anisotropic permeability the difference was larger. We stress that in this work we only considered general purpose preconditioners. Using a taylored preconditioner that can handle the weights better might further improve the efficiency. 

Different treatment of fluxes on Dirichlet boundaries for non-Cartesian grids showed only little effect on the postprocessed flux.

\section*{Acknowledgements}
LHO thanks the Center for Subsurface Modeling at ICES, UT Austin, for hosting his research stay the first half of 2015. 
In particular, thanks to Gergina Pencheva, Sanghyun Lee and Prashant Mital for constructive discussions of the current work.
LHO is funded by VISTA --- a basic research program funded by Statoil, conducted in close collaboration with The Norwegian Academy of Science and Letters. 
MGL was supported in part by the Swedish Foundation for Strategic Research Grant No.\ AM13-0029 (MGL) and the Swedish Research Council Grant No 2013-4708.

%\clearpage
%\printnomenclature

\bibliographystyle{acm}
%\section*{References} % This is not needed on arXiv
\bibliography{references}

\end{document}